\documentclass[12pt]{amsart}

\usepackage[utf8]{inputenc}

\usepackage[margin=1.3in]{geometry}

\usepackage{amsmath}
\usepackage{amsthm}
\usepackage{amssymb}
\usepackage{color}
\input xy
\xyoption{all}
\usepackage[all,cmtip, color]{xy}
\usepackage{tcolorbox}
\usepackage{enumitem, mathdots}
\usepackage{graphicx}
\usepackage[normalem]{ulem}
\usepackage{float}

\theoremstyle{definition}
\newtheorem{theorem}{Theorem}[section]
\newtheorem{lemma}[theorem]{Lemma}
\newtheorem{definition}[theorem]{Definition}
\newtheorem{example}[theorem]{Example}
\newtheorem{c-example}[theorem]{Counter-example}
\newtheorem{Lemma}[theorem]{Lemma}
\newtheorem{corollary}[theorem]{Corollary}
\newtheorem{Prop}[theorem]{Proposition}
\newtheorem{setup}[theorem]{Setup}
\newtheorem{remark}[theorem]{Remark}

\numberwithin{equation}{section}

\usepackage{hyperref}
\usepackage[pagewise]{lineno}

\newenvironment{psmallmatrix}
  {\left(\begin{smallmatrix}}
  {\end{smallmatrix}\right)}
\newcommand{\Cal}[1]{{\mathcal #1}}

\newcommand{\paral}[1]{\ar@<0.3ex>[#1] \ar@<-0.3ex>[#1]}

\usepackage{dynkin-diagrams}

\newcommand{\gen}{\mathsf{Gen}}
\newcommand{\cogen}{\mathsf{Cogen}}
\newcommand{\add}{\mathsf{add}}
\newcommand{\tors}{\mathsf{tors}}

\DeclareMathOperator{\Hom}{Hom}

\DeclareMathOperator{\Triv}{\mathsf{Triv}}
\DeclareMathOperator{\im}{Im}
\DeclareMathOperator{\rad}{rad}

\DeclareMathAlphabet\mathbfcal{OMS}{cmsy}{b}{n}

\renewcommand{\mod}{\mathsf{mod}}

\newdir{ >}{{}*!/-7pt/@{>}}

\title{Lattices of pretorsion classes}

\author[F. Campanini]{Federico Campanini}
\address{Universit\'e catholique de Louvain, Institut de Recherche en Math\'ematique et Physique, 1348 Louvain-la-Neuve, Belgium}
\email{federico.campanini@uclouvain.be}

\author[F. Fedele]{Francesca Fedele}
\address{School of Mathematics, University of Leeds, Leeds, LS2 9JT, United Kingdom}
\email{f.fedele@leeds.ac.uk}

\author[E. Y\i ld\i r\i m]{Emine Y\i ld\i r\i m}
\address{International Center for Mathematical Sciences, Institute of Mathematics and
Informatics, Bulgarian Academy of Sciences, Acad. G. Bonchev Str., Bl. 8, Sofia
1113, Bulgaria}
\email{e.yildirim@math.bas.bg}

\subjclass[2020]{Primary 18E40, 16S90, 18A99, 16G70, 06B23, 06D05}

\begin{document}

\begin{abstract}
Since their introduction, torsion theories have played a key role in the study of abelian and pointed categories. In representation theory, torsion theories and lattices of torsion classes of $\mod A$, for $A$ a finite-dimensional algebra, have been widely studied.
The more recent definition of pretorsion theories, that can be given for any category, has expanded the theory, giving many more instances of ``non-pointed torsion theories'' in unexpected settings.
In this work, we introduce and study the lattice $\Cal L_t(A)$ of pretorsion classes of $\mod A$. These lattices are in close connection with the lattices $\tors A$ of torsion classes of $\mod A$. We fully describe the completely join-irreducible elements of $\Cal L_t(A)$. Moreover, we characterise and give a full classification of when $\Cal L_t(A)$ is distributive and further describe when it can be identified with the \emph{distributive closure} of $\tors A$.
Finally, we show how the lattices of pretorsion classes, together with their duals, can be used to build pretorsion theories in $\mod A$. 
\end{abstract}
\maketitle

\tableofcontents

\section{Introduction}

Starting from Dickson's~\cite{D} categorification of the classic construction of ``breaking'' an abelian group into its torsion subgroup and torsion-free quotient group, \emph{torsion theories} have been a key concept to study abelian categories by breaking them into two subcategories: a \emph{torsion} and a \emph{torsion-free class}.
In recent years, lattices of torsion classes of module categories have been extensively studied and have gained importance. The aim of this paper is to introduce and study lattices of pretorsion classes: a recent, non-pointed version of torsion classes.  Throughout the paper, we will work in the following setup.
 
\begin{setup}
Let $k$ be an algebraically closed field and $A$ be a finite-dimensional, unitary, associative $k$-algebra. The (abelian) category of finitely generated right $A$-modules is denoted by $\mod A$. All subcategories are assumed to be full and closed under isomorphisms (replete).
\end{setup}

\subsection{Classic case: lattices of torsion classes}
In modern representation theory, lattices of torsion classes of $\mod A$~\cite{IT, T, DIRRT} have brought a new look to the topic. Understanding their poset structure led to many combinatorial and geometric discoveries, advancing the understanding of torsion theories.

The systematic study of how subcategories interact via lattice-theoretic structures started with the work of Ingalls and Thomas~\cite{IT} on the lattice of noncrossing partitions, through representation theory of quivers. They proved extraordinary bijections between combinatorial objects and many important classes of subcategories such as torsion classes, wide subcategories and cluster-tilting objects. Subsequently, the foundational work of Adachi, Iyama and Reiten~\cite{AIR} on $\tau$-tilting theory proved that the set of functorially finite torsion classes of $\mod A$ forms a lattice.

Demonet, Iyama, Reading, Reiten and Thomas~\cite{DIRRT} proved the set of torsion classes $\tors A$ of $\mod A$, ordered by inclusion, forms a \emph{complete lattice} and showed many interesting properties of the associated Hasse diagram.
Further important results on these lattices were proven by Barnard, Carroll and Zhu in~\cite{BCZ}. In particular, they showed that the \emph{completely join-irreducible elements} of $\tors A$ are in bijection with the isomorphism classes of certain indecomposable modules in $\mod A$, called \emph{bricks}. Moreover, in~\cite{DIRRT} the authors studied lattice congruences and lattice quotients of $\tors A$ and proved isomorphisms between Reading's \emph{Cambrian lattices}~\cite{R,R06} (that are lattice quotients of the weak order of finite Coxeter groups) and $\tors (kQ)$, when $Q$ is a Dynkin quiver. For type $A$, these lattices, known as \emph{Tamari lattices}, can be associated to triangulations of disks, revealing their intrinsic combinatorics and connection to cluster algebras.

Notice that the lattice $\tors A$ is \emph{distributive} only in a trivial situation: if $A$ is connected, this happens only when $\tors A$ consists of the zero subcategory and $\mod A$, see Remark~\ref{rem_distributive_tors}. However, using Jasso's reduction~\cite{Ja}, Reading, Speyer and Thomas~\cite{RST} proved that $\tors A$ always has the important, less restrictive property of being \emph{semidistributive}. Jasso's reduction techniques were further used by Asai and Pfeiffer to study lattices of torsion classes of certain abelian subcategories of abelian length categories, such as $\mod A$, in~\cite{AP}.

The field of lattice theory in the context of representation theory is enourmously rich, connecting to geometry g-vector fans, $\tau$-tilting fan, scattering diagrams, and many other recent discoveries in mathematics and physics. 
Moreover research on lattices of subcategories continues and is a very active research topic beyond abelian categories: a testimony are Krause's~\cite{K, K23} results on lattices of thick subcategories in triangulated categories.

\subsection{This paper: lattices of pretorsion classes}
Since their introduction in~\cite{D}, torsion theories have been widely studied both in abelian and general pointed categories, and extended in many different directions.  Recently, pretorsion theories were defined, by Facchini, Finocchiaro and Gran~\cite{FF, FFG}, for any category as ``non-pointed torsion theories'': the zero object and zero morphisms are replaced by a class of ``trivial objects'' and an ideal of ``trivial morphisms'', respectively (see Definition~\ref{defn_pretorsion}). Pretorsion theories appear in several different contexts, such as topological spaces and topological groups, internal preorders, categories, preordered groups, V-groups, crossed modules, etc. See for instance \cite{FFG, BCGT} and references therein.

It is natural to analyse pretorsion theories in abelian categories in depth. Two authors of the current paper, Campanini and Fedele, carried out the first research in this direction in their recent work~\cite{CF}. Their results present ways of obtaining pretorsion theories starting from the better understood torsion theories, providing many new examples of pretorsion theories in algebraic settings.

Motivated by the developments in lattice theory of torsion classes, in this paper we aim to investigate lattices of pretorsion classes and develop this theory in parallel with the well-established one of torsion classes. 

A pretorsion class in $\mod A$ is a subcategory closed under quotients and direct sums, see Proposition~\ref{prop_noetherian}. Hence, pretorsion classes in $\mod A$ are subcategories of the form $\gen (\Cal S)$ for some class $\Cal S$ of objects in $\mod A$. In Proposition~\ref{prop_complete_lattice_torsionclasses}, we show that the set of pretorsion classes in $\mod A$, ordered by inclusion, forms a complete lattice, which we denote by $\Cal L_t(A)$.

Once the lattice $\Cal L_t(A)$ is established, we proceed to prove some of its important properties. Firstly, we describe its completely join-irreducible elements, proving an analogue of the above mentioned classic result~\cite[Theorem~1.5]{BCZ}. 

\vspace{2mm}
\noindent\textbf{Theorem~\ref{thm_joinirr}.}
\textit{
There is a bijection between the isomorphism classes of indecomposable modules in $\mod A$ and the completely join-irreducible elements in the lattice $\Cal L_t (A)$, given by $M\mapsto \gen (M)$.
}
\vspace{2mm}

As we recalled above, the lattice $\tors A$ is hardly ever distributive. Surprisingly, there are many more instances of the lattice of pretorsion classes being distributive. We give a characterisation of when this happens.

\vspace{2mm}
\noindent\textbf{Theorem~\ref{thm_distributive-characterisation} and Proposition~\ref{prop_iff_unique_max}.}
\textit{
The following are equivalent:
\begin{itemize}
    \item the lattice $\Cal L_t(A)$ of pretorsion classes is distributive,
    \item $\add(\Cal T_1 \cup \Cal T_2) = \gen( \Cal T_1 \cup \Cal T_2 )$ for every pair of pretorsion classes $\Cal T_1,\Cal T_2\in\Cal L_t (A)$,
    \item each indecomposable module in $\mod A$ has a unique maximal submodule.
\end{itemize}
}
\vspace{2mm}

As we recall in Remark~\ref{rem_equivalence_modquiv}, given an algebra $A$ as in our setting, one can construct a bound quiver algebra $kQ/\Cal I$ such that $\mod A\cong \mod (kQ/\Cal I)$ as categories, where $Q$ is a finite quiver and $\Cal I$ an admissible ideal of the path algebra $kQ$. In order to classify when $\mod A$ has distributive lattice of pretorsion classes, it is then enough to do so for bound quiver algebras. We do this in the following result.

\vspace{2mm}
\noindent\textbf{Theorem~\ref{thm_classification_distributive}.}
\textit{
Let $A=kQ/\Cal I$ be a bound quiver algebra. Then $\Cal L_t(A)$ is distributive if and only if each vertex in $Q$ has at most one arrow ending at it and two going out of it, and when there are precisely one entering arrow $\alpha$ and two exiting arrows $\beta$ and $\gamma$, then $\alpha\beta$ or $\alpha\gamma$ is in $\Cal I$.
}
\vspace{2mm}

As we point out in Remark~\ref{remark_distributive_finite}, algebras as in Theorem~\ref{thm_classification_distributive} all are of finite-representation type. Hence when $\Cal L_t(A)$ is distributive, it is a finite lattice.

It is natural to explore the connection between $\tors A$ and $\Cal L_t (A)$ for a given $A$. Since torsion classes are a special kind of pretorsion classes, $\tors A$ is a subposet, but usually not a sublattice, of $\Cal L_t (A)$. 
As mentioned earlier, the completely join-irreducibles of $\tors A$ are in bijection with isomorphism classes of certain indecomposable modules (bricks), while the completely join-irreducibles of $\Cal L_t(A)$ are in bijection with isomorphism classes of all of the indecomposable modules in $\mod A$.
A natural question to ask is what is the relationship between $\tors A$ and $\Cal L_t (A)$ when all of the indecomposable modules of $A$ are bricks. The algebras for which this is the case are called \emph{locally representation directed} and are known to be of finite-representation type, so that their corresponding lattice of pretorsion classes is finite. 

We define the \emph{distributive closure} of a finite lattice $\Cal L$ to be the (distributive) lattice $\Cal L^*$ of order ideals of the poset of join-irreducibles of $\Cal L$, ordered under inclusion. 

\vspace{2mm}
\noindent\textbf{Theorem~\ref{thm_join_irred_coincide} and Proposition~\ref{prop_distr_closure}.}
\textit{
The join-irreducibles of $\Cal L_t(A)$ coincide with the join-irreducibles of $\tors A$ if and only if $A$ is a locally representation directed algebra.
Moreover, $(\tors A)^*$ can be identified with $\Cal L_t (A)$ if and only if $\Cal L_t(A)$ is distributive and $A$ is locally representation directed.
}
\vspace{2mm}

 Note that in view of Remark~\ref{rem_equivalence_modquiv}, to fully describe when $(\tors A)^*$ can be identified with $\Cal L_t (A)$, it is again enough to study the case when $A$ is a bound quiver algebra. In the extended version of Proposition~\ref{prop_distr_closure}, we explicitly describe the bound quiver algebras for which this is true.

Finally, given an algebra $A$, the lattice of pretorsion classes $\Cal L_t(A)$ and the lattice of pretorsion-free classes $\Cal L_{tf}(A)$ can be used to build all the pretorsion theories in $\mod A$. We show some methods on how to do so in Proposition~\ref{prop_build_pret_fromlattices}.

\medskip
The paper is organised as follows.
In Section~\ref{sec:background} we recall the relevant background on pretorsion classes, lattices and $\mod A$. In Section~\ref{sec:latticePT} we introduce the lattice $\Cal L_t(A)$ of pretorsion classes of $\mod A$ and describe its completely join-irreducibles.
In Section~\ref{sec:dist}, we study and classify the distributivity of $\Cal L_t(A)$, we conclude the section with a a study of the relationship between $\tors A$ and $\Cal L_t(A)$.
Finally, the Appendix gives methods to construct pretorsion theories using the lattices and also contains some examples: a description of all pretorsion theories in $\mod(k\mathbb{A}_2)$ and the explicit construction of $\Cal L_t(k\mathbb{A}_3)$ for the three possible orientations of $\mathbb{A}_3$.

\subsection*{Acknowledgements} F.C. is a postdoctoral researcher of the Fonds de la Recherche Scientifique - FNRS, F.F. is supported by the EPSRC Programme Grant\\
EP/W007509/1 and E.Y. by the Simons Foundation, grant SFI-MPS-T-Institutes-00007697, and the Ministry of Education and Science of the Republic of Bulgaria, grant DO1-239/10.12.2024. 

The authors would like to thank Ibrahim Assem and Iacopo Nonis for helpful discussions, and Aslak Buan and Bethany Marsh for the discussion on the proof of Lemma~\ref{lemma_indec_ess_gen}. E.Y. thanks Banff International Research Station and the organisers of the 2025 workshop on Lattice Theory as this wonderful workshop inspired this paper.

\section{Background}\label{sec:background}

The aim of the paper is the study of lattices of pretorsion classes for $\mod{A}$ for $A$ a finite-dimensional $k$-algebra. We aim to address a broad class of mathematicians, including algebraists, combinatorialists and both representation and category theorists. Hence, in this section we recall the needed background in these topics:
we first recall the definition and some results on pretorsion theories, then some basics on lattice theory, and finally we give an overview on finitely generated modules over finite-dimensional algebras.

\subsection{Pretorsion classes and monocoreflective subcategories}
Pretorsion theories were firstly defined in \cite{FF, FFG}, as follows.
Let $\Cal C$ be an arbitrary category and fix a class $\Cal Z$ of objects of $\Cal C$, that we shall call \emph{the class of trivial objects}. A morphism $f\colon A\to A'$ in $\Cal C$ is \textit{$\Cal Z$-trivial} if it factors through an object of $\Cal Z$. Given any two objects $X$ and $Y$, we denote by $\Triv(X,Y)$ the class of $\Cal Z$-trivial morphisms from $X$ to $Y$ and by $\Triv$ the class of all $\Cal Z$-trivial morphisms in $\Cal C$. Notice that $\Triv$ is an ideal of morphisms, that is, for every pair of composable morphisms $f$ and $g$ in $\Cal C$, $fg\in \Triv$ whenever $f$ or $g$ is in $\Triv$. Hence, it is possible to consider the notions of $\mathcal Z$-kernel and $\mathcal Z$-cokernel, defined by replacing, in the definition of kernel and cokernel, the ideal of zero morphisms with the ideal of trivial morphisms induced by the class $\mathcal Z$ as follows.

\begin{definition}
A morphism $\varepsilon\colon X\to A$ in $\Cal C $ is a \emph{$\Cal Z$-kernel} of $f\colon A \to A'$ if $f\varepsilon$ is a $\Cal Z$-trivial morphism and, whenever $\lambda \colon Y\to A$ is a morphism in $\Cal C$ and $f\lambda$ is $\Cal Z$-trivial, there exists a unique morphism $\lambda'\colon Y\to X$ in $\Cal C$ such that $\lambda=\varepsilon\lambda'$.
The notion of \emph{$\Cal Z$-cokernel} is defined dually. A sequence $A\overset{f}{\to}B\overset{g}{\to}C$ is called a \emph{short $\Cal Z$-exact sequence} if $f$ is a $\Cal Z$-kernel of $g$ and $g$ is a $\Cal Z$-cokernel of $f$.
\end{definition}

It can be easily seen that $\Cal Z$-kernels and $\Cal Z$-cokernels, whenever they exist, are unique up to isomorphism and they are monomorphisms and epimorphisms respectively \cite{FFG}.

\begin{definition}\label{defn_pretorsion}
Let $\Cal T$ and $\Cal F$ be (full and replete) subcategories of $\Cal C$.
We say that the pair $(\Cal T,\Cal F)$ is a \emph{pretorsion theory} in $\Cal C$ with class of trivial objects $\Cal Z:=\Cal T\cap \Cal F$, if the following two properties are satisfied:
\begin{itemize}
\item
$\Hom(T, F)=\Triv(T,F)$, for every $T \in \Cal T$ and $F \in \Cal F$;
\item
for every object $X$ of $\Cal C$ there is a short $\Cal Z$-exact sequence
$$\xymatrix{ T_X \ar[r]^f &  X \ar[r]^g &  F_X}$$ with $T_X\in\Cal T$ and $F_X\in\Cal F$.
\end{itemize}

If $(\Cal T, \Cal F)$ is a pretorsion theory in $\Cal C$, we call the subcategory $\Cal T$ a {\em pretorsion class} and the subcategory $\Cal F$ a {\em pretorsion-free class}.
\end{definition}

\begin{remark}
    When $\Cal C$ is pointed and $\Cal T\cap \Cal F=0$, we recover the usual notions of torsion theory. In particular, any torsion class (resp. torsion-free class) is a pretorsion class (resp. pretorsion-free class).
\end{remark}

The notion of pretorsion class can also be introduced without the use of pretorsion theories, as shown in the definitions and in the lemma below.

\begin{definition}\label{defn_coreflective}
    A subcategory $\Cal{A}$ of a category $\Cal{C}$ is called {\em coreflective} if the full embedding $E_\Cal A\colon \Cal A \to \Cal C$ admits a right adjoint $R_\Cal{A}:\Cal{C}\to\Cal{A}$. Moreover, if all the counit components are monomorphisms, $\Cal{A}$ is called {\em monocoreflective}.
\end{definition}

Note that coreflective subcategories are equivalent to strongly covering subcategories as defined below, see for example \cite[dual of Lemma~4.3]{HJV}.

\begin{definition}
    A subcategory $\Cal{A}$ of a category $\Cal{C}$ is called {\em strongly covering} if  each object $X\in \Cal{C}$ has a {\em strong $\Cal{A}$-cover}, that is a morphism $\alpha: A\to X$ such that $A\in\Cal{A}$ and any morphism $\alpha':A'\to X$ with $A'\in \Cal{A}$ uniquely factors through $\alpha$.
\end{definition}

A strong $\Cal{A}$-cover of an object $X \in \Cal C$ is unique up to isomorphism and it corresponds to the $X$-component of the counit of the adjunction $R_\Cal A \vdash E_\Cal A$ (as in Definition \ref{defn_coreflective}). In particular, monocoreflective subcategories coincide with strongly covering subcategories where all strong covers are monomorphisms.

The remaining results in this subsection are somewhat well-known, but we include them and their proofs for the convenience of the reader.

\begin{lemma}\label{lemma_monocor_pretorsion}
    Let $\Cal C$ be a category. The pretorsion classes of $\Cal C$ are precisely the monocoreflective subcategories of $\Cal C$.
\end{lemma}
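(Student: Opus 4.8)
The plan is to prove the two inclusions separately, using the reformulation recorded after the definition of \emph{strongly covering}: a subcategory is monocoreflective precisely when every object admits a monomorphic strong $\Cal{T}$-cover. So in each direction it suffices to pass between ``short $\Cal{Z}$-exact sequences'' and ``monomorphic strong covers''.

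For the forward inclusion, suppose $(\Cal{T},\Cal{F})$ is a pretorsion theory with $\Cal{Z}=\Cal{T}\cap\Cal{F}$, and fix an object $X$. By Definition~\ref{defn_pretorsion} there is a short $\Cal{Z}$-exact sequence $T_X\xrightarrow{f}X\xrightarrow{g}F_X$ with $T_X\in\Cal{T}$ and $F_X\in\Cal{F}$. Since $f$ is a $\Cal{Z}$-kernel it is a monomorphism, so I only need to check that $f$ is a strong $\Cal{T}$-cover. Given any $\alpha\colon T'\to X$ with $T'\in\Cal{T}$, the composite $g\alpha$ lies in $\Hom(T',F_X)=\Triv(T',F_X)$ by the orthogonality axiom, hence is $\Cal{Z}$-trivial; the universal property of the $\Cal{Z}$-kernel $f$ of $g$ then yields a unique $\beta\colon T'\to T_X$ with $\alpha=f\beta$. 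Thus $f$ is a monomorphic strong $\Cal{T}$-cover and $\Cal{T}$ is monocoreflective.

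For the reverse inclusion, let $\Cal{T}$ be monocoreflective, so each $X$ carries a monomorphic strong $\Cal{T}$-cover $\epsilon_X\colon T_X\to X$ with $T_X\in\Cal{T}$. The key realisation is that it is enough to exhibit \emph{some} pretorsion theory whose pretorsion class is $\Cal{T}$, and for this the coarse choice $\Cal{F}:=\Cal{C}$, hence $\Cal{Z}:=\Cal{T}\cap\Cal{C}=\Cal{T}$, already works. Orthogonality is then automatic: any $f\colon T\to F$ with $T\in\Cal{T}$ factors as $f\circ 1_T$ through $T\in\Cal{T}=\Cal{Z}$, so $\Hom(T,F)=\Triv(T,F)$ for all $T\in\Cal{T}$ and $F\in\Cal{F}$. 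It remains to verify that the sequence $T_X\xrightarrow{\epsilon_X}X\xrightarrow{1_X}X$ is short $\Cal{Z}$-exact.

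That $1_X$ is a $\Cal{Z}$-cokernel of $\epsilon_X$ is immediate, since $1_X\epsilon_X=\epsilon_X$ factors through $T_X\in\Cal{Z}$ and every $\rho\colon X\to Y$ is its own unique factorization through $1_X$. The substantive point, which I expect to be the main obstacle, is checking that $\epsilon_X$ is a $\Cal{Z}$-kernel of $1_X$: given $\lambda\colon Y\to X$ with $1_X\lambda=\lambda$ being $\Cal{Z}$-trivial, I must produce a unique factorization through $\epsilon_X$. Writing $\lambda=ba$ with $a\colon Y\to Z$, $b\colon Z\to X$ and $Z\in\Cal{T}$, the strong-cover property applied to $b$ gives a unique $b'\colon Z\to T_X$ with $b=\epsilon_X b'$, whence $\lambda=\epsilon_X(b'a)$; uniqueness of the factorization follows because $\epsilon_X$ is a monomorphism. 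This is also exactly where the monomorphism hypothesis is indispensable, in agreement with the fact that $\Cal{Z}$-kernels are always monomorphisms: a merely coreflective (non-mono) cover would fail the uniqueness clause. With both halves verified, $(\Cal{T},\Cal{C})$ is a pretorsion theory and $\Cal{T}$ is a pretorsion class, completing the equivalence.
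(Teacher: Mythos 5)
Your proof is correct and takes essentially the same route as the paper: the forward direction makes explicit, via the strong-cover characterisation, that the torsion part $T_X\to X$ of the short $\Cal{Z}$-exact sequence is the monic coreflection counit (a fact the paper simply cites from \cite{FFG}), and the reverse direction uses exactly the paper's candidate pair $(\Cal{T},\Cal{C})$. The only difference is that you carry out in full the verification that $(\Cal{T},\Cal{C})$ is a pretorsion theory --- including the key point that the strong-cover property plus monicity of $\epsilon_X$ yield the $\Cal{Z}$-kernel universal property --- which the paper declares ``immediate to check''.
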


\begin{proof}
    Assume that $(\Cal T, \Cal F)$ is a pretorsion theory. By \cite{FFG}, the ``torsion functor" $T\colon \Cal C\to \Cal T$ is the right adjoint of the full embedding $E_\Cal T\colon \Cal T \to \Cal C$. Moreover, all the counit components are monomorphisms. On the other hand, if $\Cal T$ is a monocoreflective subcategory of $\Cal C$, then it is immediate to check that the pair $(\Cal T, \Cal C)$ is a pretorsion theory in $\Cal C$.
\end{proof}

It is well-known that, in suitable frameworks, monocoreflective subcategories can be characterised in terms of closure under quotients and coproducts. Here we include a variation of the statement tailored for our purposes, including a sketch of the proof. 

\begin{definition}\label{defn_wellp_noet}
    A category $\Cal C$ is said to be {\em well-powered} if for every object $X \in \Cal C$, the subobjects of $X$ form a set.
    A category $\Cal C$ is said to be {\em Noetherian}, if for every object $X \in \Cal C$, any ascending chain of subobjects of $X$ stabilizes.
\end{definition}

\begin{Prop}\label{prop_noetherian}
    Let $\Cal C$ be a well-powered Noetherian abelian category. Then, a subcategory $\Cal T$ of $\Cal C$ is monocoreflective if and only if $\Cal T$ is closed under quotients and existing coproducts.
\end{Prop}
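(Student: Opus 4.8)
The plan is to prove the two implications separately, throughout using the identification recalled before Lemma~\ref{lemma_monocor_pretorsion} of monocoreflective subcategories with strongly covering subcategories whose strong covers are monomorphisms. Thus it suffices to show that $\Cal T$ is closed under quotients and existing coproducts if and only if every object $X\in\Cal C$ admits a strong $\Cal T$-cover that is a monomorphism.

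For the forward direction, I would assume $\Cal T$ is monocoreflective and let $\alpha_X\colon T_X\to X$ denote the monic strong $\Cal T$-cover of $X$. To see that $\Cal T$ is closed under quotients, take an epimorphism $q\colon T\to X$ with $T\in\Cal T$; by the universal property of the cover, $q$ factors through $\alpha_X$, which forces $\alpha_X$ to be epic as well. Since $\Cal C$ is abelian, hence balanced, the map $\alpha_X$ is both monic and epic and so an isomorphism, giving $X\cong T_X\in\Cal T$. For coproducts, suppose $P=\coprod_i T_i$ exists in $\Cal C$ with all $T_i\in\Cal T$; each coproduct injection factors uniquely through $\alpha_P$, and assembling these factorizations via the universal property of the coproduct produces a map $\beta\colon P\to T_P$ with $\alpha_P\beta=\Id{P}$. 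A split epimorphism that is also monic is an isomorphism, so again $P\cong T_P\in\Cal T$.

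For the converse, I would assume $\Cal T$ is closed under quotients and existing coproducts and construct the strong cover of a fixed $X\in\Cal C$ as the \emph{trace} of $\Cal T$ in $X$, namely the largest subobject of $X$ lying in $\Cal T$. Let $\Sigma$ be the collection of subobjects $S\le X$ with $S\in\Cal T$; this is a set because $\Cal C$ is well-powered. First I would check that $\Sigma$ is closed under finite sums: for $S_1,\dots,S_n\in\Sigma$, the sum $S_1+\cdots+S_n$ is the image of the canonical map from the finite biproduct $\bigoplus_i S_i$, which lies in $\Cal T$ by closure under (finite, hence existing) coproducts, and therefore $S_1+\cdots+S_n\in\Cal T$ by closure under quotients. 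Since $\Cal C$ is Noetherian, the poset $\Sigma$ satisfies the ascending chain condition and hence has a maximal element $T_X$; closure under finite sums upgrades this to a \emph{maximum}, so $T_X$ contains every member of $\Sigma$. I then claim the inclusion $\alpha\colon T_X\to X$ is the desired cover: given any $f\colon T'\to X$ with $T'\in\Cal T$, its image $\im f$ is a quotient of $T'$, hence lies in $\Sigma$ and thus is contained in $T_X$, so $f$ factors through $\alpha$; uniqueness of the factorization is immediate since $\alpha$ is monic.

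The isomorphism steps in the forward direction and the biproduct-image description of finite sums are routine, so I expect the main obstacle to be the converse construction, specifically producing the trace as an honest object of $\Cal T$. The only tools available for this are well-poweredness and the Noetherian condition working in tandem: the former guarantees that $\Sigma$ is a set, so that ``largest subobject in $\Cal T$'' is meaningful, while the latter is precisely what lets me pass from closure under \emph{finite} sums to the existence of a single maximal, hence maximum, subobject, thereby avoiding any appeal to possibly non-existent infinite coproducts inside $\Cal C$.
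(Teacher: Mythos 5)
Your proposal is correct and takes essentially the same route as the paper: quotient closure by factoring an epimorphism through the monic strong cover (forcing it to be an isomorphism), and the converse via the trace, i.e.\ the maximum subobject of $X$ lying in $\Cal T$, produced by combining closure under finite sums (images of canonical maps out of biproducts) with the Noetherian condition. The only cosmetic difference is in the forward direction for coproducts, where you give a direct split-epimorphism argument while the paper simply cites the general fact that coreflective subcategories are closed under all existing colimits.
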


\begin{proof}
    Assume that $\Cal T$ is monocoreflective in $\Cal C$. Then, $\Cal T$ is closed under all colimits existing in $\Cal C$ (\cite[Section 3.5]{Borceux1}). Moreover, if $f\colon T \to X$ is an epimorphism with $T \in    \Cal T$, then $f$ factors through the strong $\Cal T$-cover $\varepsilon_X\colon T_X\to X$ of $X$. Thus, $\varepsilon_X$ is an isomorphism and $X \in \Cal T$.
    
    Conversely, assume that $\Cal T$ is closed under quotients and existing coproducts and fix $X \in \Cal C$. We want to find a strong $\Cal T$-cover for $X$. Consider the set $\Gamma$ of all subobjects $T$ of $X$ such that $T \in \Cal T$. Since $\Cal C$ is Noetherian, every chain in $\Gamma$ has an upper bound, hence, by Zorn's Lemma, there exists a maximal element $T_X \in \Gamma$. Recall that the join of two subobjects $A,B$ of $X$ is given by the image of the canonical morphisms $A\oplus B \to X$ and if $A,B \in \Cal T$, then their join is in $\Cal T$ by the hypothesis on $\Cal T$. From this, we get that $T_X$ is in fact a maximum in $\Gamma$. It is then routine to show that the inclusion $T_X \hookrightarrow X$ is a (mono) strong $\Cal T$-cover of $X$.
\end{proof}

\begin{remark}
    Proposition~\ref{prop_noetherian} shows that if $\Cal C$ is a well-powered Noetherian abelian category, then its pretorsion classes are exactly its subcategories closed under quotients and existing coproducts.
    Notice this agrees with Stenstr\"{o}m's earlier definition of \emph{pretorsion class} \cite[pp.~137]{Sten} in the setting of complete, cocomplete, locally small, abelian categories.
\end{remark}

\subsection{Lattices}

A \emph{lattice} $(\Cal L, \vee, \wedge)$ is a poset (a partially ordered set) $\Cal L$ with two operations:  \emph{join} and \emph{meet}. For every pair $x,y$ of elements, the join, denoted $x\vee y$, is the least upper bound and the meet, denoted $x\wedge y$, is the greatest lower bound. A lattice is \emph{distributive} if these two operations distribute over each other, that is for any three elements $x,y,z$ in $\Cal L$, we have that

\[
x \wedge (y \vee z) = (x \wedge y) \vee (x \wedge z),
\]
or equivalently
\[
x \vee (y \wedge z) = (x \vee y) \wedge (x \vee z).
\]

A lattice $\Cal L$ is said to be \emph{complete} if any subset $S$ of $\Cal L$ has a unique least upper bound and a unique greatest lower bound, denoted respectively by
$$
\bigvee_{x\in S} x \qquad \text{ and } \qquad \bigwedge_{x\in S} x.
$$
A complete lattice has a \emph{bottom element} (the meet of all elements in $L$) and a \emph{top element} (the join of all elements in $L$). Note that a finite lattice is clearly complete.

The interested reader can find more on general lattice theory in~\cite{G, Stan}. Moreover, lattices in the context of representation theory of algebras have received quite popularity with cornerstone papers such as~\cite{AIR,DIRRT,K}.

\begin{definition}
    Let $(\Cal L,\vee, \wedge)$ be a lattice. An element $x \in \Cal L$ different from the bottom element is called a {\em join-irreducible} if whenever $x=a\vee b$, either $x=a$ or $x=b$. An element $x$ different from the bottom element is called a {\em completely join-irreducible} if there exists $x_0 \in \Cal L$, $x\neq x_0$, such that for every $a \in \Cal L$, $a< x$ if and only if $a\leq x_0$. If the lattice is complete, the latter is equivalent to requiring that  $\bigvee_{a<x}a<x$. A completely join-irreducible element is always join-irreducible and the two notions coincide if $\Cal L$ is a finite (complete) distributive lattice.
\end{definition}

\subsection{Finitely generated modules over finite-dimensional \texorpdfstring{$k$}{k}-algebras}

The main aim of this paper is studying and classifying pretorsion classes for the category $\mod{A}$ of finitely generated right modules over a finite-dimensional $k$-algebra $A$, where we fix $k$ to be an algebraically closed field. These kind of categories are very well-known and studied, see for example  \cite{ASS} for more details.

Notice that $\mod A$ is an abelian category which is also well-powered and Noetherian in the sense of Definition~\ref{defn_wellp_noet}. Hence its pretorsion classes can be described using Proposition~\ref{prop_noetherian} as its subcategories closed under quotients and finite direct sums. Moreover, $\mod A$ is a Krull-Schmidt category in the following sense.

\begin{definition}
A category $\Cal C$ is said to be a {\em Krull-Schmidt category} if 
\begin{enumerate}
    \item
    every object in $\Cal C$ is isomorphic to the direct-sum of finitely many indecomposable objects,
    \item
    whenever $M_1\oplus \cdots\oplus M_r\cong N_1\oplus \cdots \oplus N_s \in \Cal C$ with $M_1,\dots,M_r,N_1,\dots,N_s$ indecomposable objects, then $r=s$ and there exists a permutation $\sigma \in S_r$ such that $M_i \cong N_{\sigma(i)}$ for every $i=1,\dots,r$.
\end{enumerate}
\end{definition}

Thanks to the above property, it is enough to  understand the indecomposable modules in $\mod A$ in order to study all of the objects of the category. 

We recall here an important class of algebras.

\begin{definition}[{\cite{D91}}]
    An $A$-module $M$ is a {\em brick} if its endomorphism ring is a division ring, or equivalently, if $\operatorname{End}_A(M)\cong k$.

    A $k$-algebra $A$ such that each indecomposable module $\mod{A}$ is a brick, is called a {\em locally representation directed algebra}.
\end{definition}

Locally representation directed algebras are {\em representation-finite}, that is, there are only finitely many isomorphism classes of indecomposable modules in $\mod A$. Examples of locally representation directed algebras are representation directed algebras such as representation-finite hereditary algebras and tilted algebras (see \cite[Chapter IX]{ASS}).

For most of the paper we will study $\mod A$ for $A$ a \emph{bound quiver algebra}. This will allow us to obtain a complete classification of when the lattices of pretorsion classes of $\mod A$ are distributive in terms of the associated quiver, see Section~\ref{section_classification_distr}.

We quickly recall the definition of such algebras, for more details and basic properties, see \cite{ASS} or \cite{S}.
Let $Q$ be a \emph{finite quiver}, that is a directed finite multigraph. Its \emph{path algebra} $kQ$ is the $k$-algebra with basis the paths in $Q$ (including the trivial path at each vertex) and multiplication given by concatenation of paths when possible and zero otherwise. In order to guarantee that these algebras are finite-dimensional, we restrict ourselves to bound quiver algebras, that is we factor $kQ$ by an \emph{admissible ideal}, which is an ideal of $kQ$ that contains some power of any present oriented cycle and does not contain any arrow.

We write $A=kQ/\Cal I$ to mean a bound quiver algebra with $Q$ a finite quiver and $\Cal I$ an admissible ideal, so that $A$ is a (unitary, associative) finite-dimensional $k$-algebra. 
Recall that an algebra $A$ is \emph{connected} if it is not the direct product of two algebras. Note that this is equivalent to assuming  that $Q$ is \emph{connected}, that is the underlying unoriented graph is connected.

By the following remark, in order to study $\mod A$ for finite-dimensional $k$-algebras, it is enough to study $\mod (kQ/\Cal I)$ for bound quiver algebras $kQ/\Cal I$.

\begin{remark}\label{rem_equivalence_modquiv}
    Let $A$ be a finite-dimensional $k$-algebra, where $k$ is an algebraically closed field. Then there exists a quiver $Q$ and an admissible ideal $\Cal I$ of $kQ$ such that there is an equivalence of categories $\mod A\cong \mod (kQ/\Cal I)$, see~\cite[Chapter~II]{ASS}.
    
    More precisely, there exists a basic algebra $A^b$ and an equivalence of categories $\mod A\cong \mod A^b$. Moreover, one can construct the \emph{ordinary quiver} $Q_{A^b}$ of the basic algebra $A^b$, and $A^b\cong kQ_{A^b}/\Cal I$ for some admissible ideal $\Cal I$ of $kQ_{A^b}$.
    Hence, we have an equivalence of categories $\mod A\cong \mod (kQ_{A^b}/\Cal I)$. As already said, since in this paper we are interested in the study of such module categories rather than the algebra itself, we may assume $A$ is a bound quiver algebra from the beginning. In this paper, modules over $A$ are identified with \emph{quiver representations} over the bound quiver, in particular we use the notation from~\cite[Remark 1.1]{S} for indecomposable modules.
\end{remark}

\subsubsection{The lattice of torsion classes of $A$}\label{subsection_torsA}
We conclude this section with a brief overview of the lattice $\tors A$ of torsion classes of $\mod A$.
Later in the paper, we will compare $\tors A$ to the lattice of pretorsion classes of $\mod A$.

By a \emph{torsion class} we mean a subcategory that appears as the first half of a torsion theory in the ambient (pointed) category.
Recall that since $\mod A$ is a well-powered Noetherian abelian category, it is well-known that its torsion classes are exactly the subcategories of $\mod A$ that are closed under quotients, finite direct sums and extensions. For a class $\Cal S$ of modules in $\mod A$, we denote by $\Cal T(\Cal S)$ the smallest torsion class in $\mod A$ containing $\Cal S$.

We denote by $\tors A$ the set of torsion classes of $\mod A$. Note that this is a poset ordered by inclusion. As widely studied recently, $\tors A$ is a complete lattice with many interesting properties, see~\cite{AP, BCZ, DIRRT, GM, Ja, K} and the survey paper~\cite{T}.

An important feature of $\tors A$ is that its completely join-irreducibles are in bijection with the isomorphism classes of the bricks in $\mod A$, see~\cite[Theorem~3.3]{DIRRT} and~\cite[Theorem~1.5]{BCZ}. In Section~\ref{section_joinirred} we will see a similar result for the lattice of pretorsion classes of $\mod A$.

Moreover, $\tors A$ is completely semidistributive, see~\cite[Theorem~3.1]{DIRRT} (note that semidistributivity was first proved in~\cite{GM}), and further properties following this feature were studied in~\cite{T}. 
On the other hand, putting together the results on this lattice, one can see that $\tors A$ is very rarely distributive, as we show in the following remark.

\begin{remark}\label{rem_distributive_tors}
    Note that in order to determine when $\tors A$ is distributive, it is enough to study connected algebras. In fact, by an argument analogue to the one of Proposition~\ref{remark_directprod_distributive}, if $A=\prod_{i=1}^m A_i$ is a finite direct product of algebras, then
    $$
    \tors A\cong \tors A_1\times \dots\times \tors A_m
    $$
    as lattices and $\tors A$ is distributive if and only if $\tors A_i$ is distributive for each $i=1,\dots, m$.

    Moreover, in view of Remark~\ref{rem_equivalence_modquiv}, we may assume that $A=kQ/\Cal I$ is a bound quiver algebra and $Q$ is a connected quiver.
    If $Q$ only has a single vertex (and possibly many loops at it) then there is only one brick in $\mod A$ up to isomorphism: the simple module. Then, by \cite[Theorem~1.5]{BCZ}, we conclude that $\tors A$ only has one completely-join irreducible and the only torsion classes are $\mod A$ and $0$. In this trivial case, the lattice $\tors A$ is distributive.

    Suppose now that $Q$ has at least two vertices. Since it is connected, there is at least one arrow $\alpha: i\to j$ in $Q$, where $i$ and $j$ are distinct vertices. Let $S_i$ and $S_j$ be the simple modules corresponding to the vertices $i$ and $j$ respectively, and $M$ be the indecomposable representation with a copy of $k$ at vertices $i$ and $j$, the identity map at $\alpha$, and zero vector spaces and zero maps elsewhere.
    There is a short exact sequence of the form $0\to S_j\to M\to S_i\to 0$ and hence, in $\tors A$ we have $\Cal T(M)\subseteq\Cal T(S_j)\vee \Cal T(S_i)$ and $\Cal T(S_i)\subsetneq \Cal T(M)$, where the latter is a strict inclusion by \cite[Theorem~1.5]{BCZ} as $M$ and $S_i$ are non-isomorphic bricks. Applying \cite[Lemma~7.2]{T}, it is easy to see that $S_j\not\in \Cal T(M)$. Moreover, if $S$ is simple, using the filtration description of torsion classes from \cite[Section~2]{T}, one can see that the only torsion classes contained in $\Cal T(S)$ are $0$ and $\Cal T(S)$, and so $\Cal T(S_i)\wedge\Cal T (S_j)=0$ and $\Cal T(M)\wedge \Cal T (S_j)=0$. Hence
    $$
    \Cal T(M)\wedge (\Cal T (S_i)\vee \Cal T(S_j))=\Cal T(M)\neq \Cal T(S_i)= (\Cal T(M)\vee \Cal T (S_i))\wedge( \Cal T (M)\wedge \Cal T(S_j))
    $$
    and $\tors A$ is not distributive.

    In conclusion, allowing $A$ also not to be connected,  $\tors A$ is distributive if and only if it is isomorphic to a lattice product of copies of the lattice 
    $$\xymatrix@R=1em{
    y \ar@{-}[d]\\ x.
    }$$
    In terms of algebras, this happens if and only if $A\cong \prod_{i=1}^m A_i$, with $A_1,\dots,A_m$ connected and such that $\mod A_i\cong \mod (kQ_i/\Cal I)$ with $Q_i$ having a single vertex, for each $i=1,\dots,m$.
\end{remark}

We will see in Section~\ref{section_classification_distr} that the lattice of pretorsion classes of $\mod A$ is much more rich in this sense: there are many nontrivial cases when the lattice is distributive. We give a full classification of when this happens, showing also when it coincides with the distributive closure of $\tors A$.

\section{Lattices of pretorsion classes}~\label{sec:latticePT}

Recall that in this paper $A$ is a finite-dimensional algebra over an algebraically closed field $k$ and $\mod A$ denotes the category of finitely generated (right) modules over $A$. Recall also that $\mod A$ is a well-powered Krull-Schmidt Noetherian abelian category.

\subsection{The lattice \texorpdfstring{$\Cal L_t(A)$}{LtA} of pretorsion classes}
\begin{Lemma}
    If $\{\Cal{T}_i\}_{i \in I}$ is a family of pretorsion classes in $\mod A$, then $\bigcap_{i\in I}\Cal{T}_i$ is a pretorsion class.
\end{Lemma}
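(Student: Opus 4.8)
The plan is to avoid working directly with the adjunction of Definition~\ref{defn_coreflective} and instead to exploit the closure-theoretic description of pretorsion classes. Since $\mod A$ is a well-powered Noetherian abelian category, Proposition~\ref{prop_noetherian} tells us that its pretorsion classes are precisely the subcategories closed under quotients and finite direct sums. So it suffices to verify that $\Cal T := \bigcap_{i \in I} \Cal T_i$ enjoys these two closure properties, together with being full and replete; once this is done, Proposition~\ref{prop_noetherian} (used in the other direction) immediately upgrades $\Cal T$ to a pretorsion class.

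First I would note that $\Cal T$ is automatically full and closed under isomorphisms, being an intersection of full replete subcategories, and that it is nonempty: each $\Cal T_i$ is closed under quotients, hence contains the zero module (a quotient of any of its objects), so $0 \in \Cal T$. For closure under quotients, take $M \in \Cal T$ and an epimorphism $M \twoheadrightarrow N$; since $M \in \Cal T_i$ for every $i$ and each $\Cal T_i$ is closed under quotients, we get $N \in \Cal T_i$ for every $i$, i.e.\ $N \in \Cal T$. Closure under finite direct sums is identical: if $M, M' \in \Cal T$, then $M \oplus M' \in \Cal T_i$ for all $i$, whence $M \oplus M' \in \Cal T$, and iterating covers arbitrary finite direct sums.

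There is essentially no hard step here; the only points demanding care are bookkeeping ones. One must invoke Proposition~\ref{prop_noetherian} in the correct direction --- using the characterization both to know what each $\Cal T_i$ satisfies and to conclude that the intersection, once shown to satisfy the closure properties, is again monocoreflective --- and to read ``existing coproducts'' in that proposition as finite direct sums in the concrete setting $\mod A$. If one instead wished to argue intrinsically via monocoreflectivity, the obstacle would be producing a strong $\Cal T$-cover for each object; this is exactly the construction that Proposition~\ref{prop_noetherian} packages away (as the maximal $\Cal T$-subobject, via Zorn's Lemma and Noetherianity), which is why the closure-property route is the clean one to take.
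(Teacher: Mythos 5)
Your proof is correct and matches the paper's own argument: both invoke Proposition~\ref{prop_noetherian} to reduce the claim to closure under quotients and (finite) direct sums, and then observe these properties pass trivially to the intersection. Your additional remarks on fullness, repleteness, and nonemptiness are fine bookkeeping that the paper silently takes for granted.
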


\begin{proof}
    By Proposition~\ref{prop_noetherian}, it is enough to show that $\bigcap_{i\in I}\Cal{T}_i$ is closed under quotients and existing coproducts. But this is clear, since all the $\Cal T_i$'s have these properties.
\end{proof}

We can thus give the following definition.

\begin{definition}\label{defn_Gen}
    Let $\Cal S$ be a class of objects of $\mod A$. We denote by $\gen(\Cal S)$ the smallest pretorsion class containing $\Cal S$.
\end{definition}

Thanks to Proposition~\ref{prop_noetherian}, $\gen(\Cal S)$ is the smallest (full) subcategory of $\mod A$ closed under quotients and finite direct sums, that is, the subcategory of $\mod A$ whose objects are quotients of finite direct sums of modules in $\Cal S$. Thus Definition~\ref{defn_Gen} coincides with the one given, for instance, in \cite[Chapter~VI]{ASS}.

\begin{definition}
   We denote by $\Cal L_t(A)$ the partially ordered set of pretorsion classes of $\mod A$, ordered under inclusion.
\end{definition}

\begin{Prop}\label{prop_complete_lattice_torsionclasses}
    The poset $\Cal L_t(A)$ is a complete lattice, with meet and join given, for every $\Cal T_1, \Cal T_2 \in \Cal L_t(A)$, by
    $$
    \Cal T_1 \wedge \Cal T_2=\Cal T_1 \cap\Cal T_2 \qquad \text{and} \qquad \Cal T_1 \vee \Cal T_2 = \gen(\Cal T_1 \cup \Cal T_2).
    $$
\end{Prop}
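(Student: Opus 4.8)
The plan is to verify that $\Cal L_t(A)$ is a complete lattice by exhibiting meets and joins for arbitrary families, which automatically gives the binary formulas stated. The key observation is that we already have a good handle on arbitrary intersections from the previous lemma, so the natural strategy is to construct all meets as intersections and then obtain joins via the standard ``meet of upper bounds'' construction that works in any complete meet-semilattice with a top element.

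First I would establish the meet. Given any family $\{\Cal T_i\}_{i\in I}$ in $\Cal L_t(A)$, the previous lemma shows $\bigcap_{i\in I}\Cal T_i$ is again a pretorsion class, and it is clearly the largest pretorsion class contained in every $\Cal T_i$, hence the greatest lower bound. In particular $\Cal T_1\wedge\Cal T_2=\Cal T_1\cap\Cal T_2$. Note that $\mod A$ itself is the top element (it is trivially closed under quotients and finite direct sums), so the empty meet is $\mod A$ and every family has a lower bound, namely the bottom element $\bigcap_{\Cal T\in\Cal L_t(A)}\Cal T$.

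Next I would produce arbitrary joins. Given a family $\{\Cal T_i\}_{i\in I}$, consider the class $\Cal U$ of all pretorsion classes containing every $\Cal T_i$; this is nonempty since $\mod A\in\Cal U$. I claim $\bigvee_{i\in I}\Cal T_i=\bigcap_{\Cal T\in\Cal U}\Cal T$. By the lemma this intersection is a pretorsion class, it contains each $\Cal T_i$ (since every member of $\Cal U$ does), and any pretorsion class containing all the $\Cal T_i$ lies in $\Cal U$ and hence contains the intersection; thus it is the least upper bound. This shows $\Cal L_t(A)$ is a complete lattice. Specializing to the two-element family and using Definition~\ref{defn_Gen}, the join $\Cal T_1\vee\Cal T_2$ is the smallest pretorsion class containing $\Cal T_1\cup\Cal T_2$, which is exactly $\gen(\Cal T_1\cup\Cal T_2)$.

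The argument is essentially the standard fact that a complete meet-semilattice with a top is a complete lattice, so there is no serious obstacle; the only point requiring a little care is confirming that the collection of pretorsion classes is a \emph{set} (so that the intersections above are legitimate), but this follows because $\mod A$ is well-powered and skeletally small, so its subcategories closed under quotients and finite direct sums form a set. The identification of the binary join with $\gen(\Cal T_1\cup\Cal T_2)$ is then immediate from the defining universal property of $\gen$ in Definition~\ref{defn_Gen}.
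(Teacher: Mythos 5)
Your proposal is correct and follows essentially the same route as the paper, which simply states that the result ``immediately follows'' from the preceding lemma (arbitrary intersections of pretorsion classes are pretorsion classes) and Definition~\ref{defn_Gen} of $\gen$ as the smallest pretorsion class containing a given class; you have merely written out the standard complete-meet-semilattice-with-top argument that the authors leave implicit. Your extra care about the collection of pretorsion classes forming a set is a reasonable addition but does not change the approach.
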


\begin{proof}
    It immediately follows from the above discussion.
\end{proof}

Since torsion classes are in particular pretorsion classes, $\tors A$ is a subposet (but in general not a sublattice) of $\Cal L_t(A)$. Moreover, it is clear that $\gen (\Cal S)$ is contained in $\Cal T (\Cal S)$ for every class $\Cal S$ of modules in $\mod A$.

\begin{remark}
Note that in the above none of the assumptions on the category were used, so the lattice of pretorsion classes can be defined in the same way for any category $\Cal C$, substituting $\gen(\Cal T_1 \cup \Cal T_2)$ with the smallest pretorsion class containing both $\Cal T_1$ and $\Cal T_2$, and the results above hold. 
\end{remark}

For a class of modules $\Cal{X}\subseteq\Cal{C}$, we denote by $\add(\Cal{X})$ the (full) subcategory of $\Cal{C}$ whose modules are isomorphic to finite direct sums of modules of $\Cal X$.

\begin{Lemma}\label{lemma_gen_tor}
    Let $\Cal T$ be a pretorsion class of $\mod A$. Then, $\Cal T=\add(\Cal X)$ for a class $\Cal X$ of indecomposable modules of $\mod A$. In particular, for each class of modules $\Cal S$ of $\mod A$, there exists a class $\Cal X$ of indecomposable modules such that $\gen(\Cal S)=\add(\Cal X)$.
\end{Lemma}

\begin{proof}
    Since $\Cal T$ is a pretorsion class, it is closed under quotients, hence under direct summands. Therefore, it suffices to define $\Cal X$ to be the class of indecomposable modules of $\mod A$ that belong to $\Cal T$.
\end{proof}

\begin{remark}\label{rem_closure-of-add}
\begin{itemize}
    \item[(i)] Let $M \in \mod A$. Then, $\gen( M)=\add(\Cal X_M)$, where $\Cal X_M$ is the class of all indecomposable modules in $\mod A$ such that there exist $n \in \mathbb N$ and an epimorphisms $M^n \to N$. Equivalently, $\gen( M )$ is the class of all modules in $\mod A$ such that there exist $n \in \mathbb N$ and an epimorphisms $M^n \to N$, that is, the closure under quotients of $\add(M)$.
    
    \item[(ii)]  More generally, if $\Cal X$ is a class of (indecomposable) modules in $\mod A$, then $\gen( \Cal X )$ is the closure under quotients of $\add(\Cal X)$, that is, $N \in \gen( \Cal X)$ if and only if there exist (not necessarily distinct) $X_1, \dots, X_m \in \Cal X$ and an epimorphism $\oplus_{i=1}^m X_i \to N$.
    \end{itemize}
\end{remark}

\subsection{The dual: the lattice \texorpdfstring{$\Cal L_{tf}(A)$}{LtfA} of pretorsion-free classes}
It is natural to ask for similar results for the lattice of pretorsion-free classes, taking the ``dual" of the results stated above. First, notice that a subcategory $\Cal F$ of a category $\Cal C$ is pretorsion-free if and only if $\Cal F$ is an epireflective subcategory of $\Cal C$ (dual of Lemma~\ref{lemma_monocor_pretorsion}). Moreover, given a finite-dimensional $k$-algebra $A$, the category $\mod A$ is also well-copowered (i.e. for every module $M \in \mod A$, the quotients of $M$ form a set) and Artinian (i.e. for every module $M \in \mod A$, any descending chain of submodules of $M$ stabilises), hence using the dual of Proposition~\ref{prop_noetherian}, pretorsion-free subcategories of $\mod A$ are precisely those subcategories closed under submodules and finite direct sums. We can therefore define, for every class of finitely generated $A$-modules $\Cal S$, the subcategory $\cogen (\Cal S)$ of $\mod A$ to be the smallest pretorsion-free class containing  $\Cal S$. Finally, we can consider the poset $\Cal L_{tf}(A)$ of pretorsion-free classes of $\mod A$, ordered under inclusion, for which we have the following result.

\medskip
\begin{Prop}[{dual of Proposition~\ref{prop_complete_lattice_torsionclasses}}]~\label{prop_dual} 
The poset $\Cal L_{tf}(A)$ is a complete lattice, with meet and join given, for every $\Cal F_1, \Cal F_2 \in \Cal L_{tf}(A)$, by
    $$
    \Cal F_1 \wedge \Cal F_2=\Cal F_1 \cap\Cal F_2 \qquad \text{and} \qquad \Cal F_1 \vee \Cal F_2 = \cogen(\Cal F_1 \cup \Cal F_2).
    $$
\end{Prop}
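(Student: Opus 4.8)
The statement is labelled as the dual of Proposition~\ref{prop_complete_lattice_torsionclasses}, so the plan is to transport each ingredient of that proof across the opposite-category duality $\mod A \rightsquigarrow (\mod A)^{\mathrm{op}}$. The one genuine subtlety is that $\mod A$ is not self-dual as an abelian category, so before dualizing I must check that the hypotheses the dual arguments require actually hold. These are exactly the well-copowered and Artinian conditions, which the paragraph preceding the statement already records for $\mod A$; consequently the dual of Proposition~\ref{prop_noetherian} applies, and pretorsion-free classes are precisely the subcategories of $\mod A$ closed under submodules and finite direct sums. With that in hand, the remaining steps are the mirror images of the discussion culminating in Proposition~\ref{prop_complete_lattice_torsionclasses}.

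First I would prove the dual of the Lemma preceding Definition~\ref{defn_Gen}, namely that an arbitrary intersection $\bigcap_{i\in I}\Cal F_i$ of pretorsion-free classes is again pretorsion-free. This is immediate: closure under submodules and closure under finite direct sums are both inherited by any intersection of subcategories possessing them. It follows that arbitrary meets exist in $\Cal L_{tf}(A)$ and are computed as intersections; in particular $\Cal F_1\wedge\Cal F_2=\Cal F_1\cap\Cal F_2$.

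Next I would treat the join. By the definition of $\cogen$ recalled just above the statement, $\cogen(\Cal F_1\cup\Cal F_2)$ is the smallest pretorsion-free class containing both $\Cal F_1$ and $\Cal F_2$, which is exactly their least upper bound in the poset; running the same argument with $\bigcup_{i\in I}\Cal F_i$ produces arbitrary joins. At this point both arbitrary meets and arbitrary joins have been exhibited together with their explicit formulae, so $\Cal L_{tf}(A)$ is a complete lattice as claimed. Alternatively, once arbitrary meets are known to exist, one may simply invoke the standard fact that a poset admitting all meets is automatically a complete lattice, with joins recovered as meets of upper bounds; this shortens the verification at the cost of obscuring the explicit description of $\vee$.

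I do not anticipate any real obstacle: the content is a routine dualization, and the original statement was itself settled by a one-line appeal to the surrounding discussion. The only point deserving explicit mention---and the sole place where the non-self-duality of $\mod A$ is relevant---is the need to invoke well-copoweredness and the Artinian condition in place of well-poweredness and the Noetherian condition. Since these are already established for $\mod A$, the dual of Proposition~\ref{prop_noetherian} is available and the rest of the argument goes through verbatim.
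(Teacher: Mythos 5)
Your proposal is correct and follows essentially the same route as the paper, which likewise states this result as an immediate consequence of the preceding discussion: $\mod A$ is well-copowered and Artinian, the dual of Proposition~\ref{prop_noetherian} identifies pretorsion-free classes as the subcategories closed under submodules and finite direct sums, intersections give meets, and $\cogen$ gives joins. Your explicit flagging of where the non-self-duality of $\mod A$ enters is exactly the one point the paper also takes care to record.
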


Nevertheless, dualising the results presented in the rest of the paper is not straightforward. For this reason and for the sake of brevity, from now on we focus our attention only on pretorsion classes, leaving the study of pretorsion-free classes for future works.

\subsection{Completely join-irreducible elements in \texorpdfstring{$\Cal L_t(A)$}{Lt(A)}}\label{section_joinirred}

The aim of this section is to describe the completely join-irreducible elements in the lattice $\Cal L_t(A)$ and to compare them to those in the lattice of classic torsion classes $\tors A$.

We start by showing a key property of indecomposable modules. We point out that the second assertion in the following lemma is well-known and already noted in \cite[Lemma~1.12]{BHM}.

\begin{lemma}\label{lemma_indec_ess_gen}
    Let $M \in \mod A$ be an indecomposable module. If $\gen( M )=\gen(\Cal X)$ for a set $\Cal X$ of indecomposable modules, then $M\cong M'$ for some $M' \in \Cal X$. In particular, for $M$ and $N$ indecomposable modules in $\mod A$, $\gen( M)=\gen( N)$ implies $M\cong N$.
\end{lemma}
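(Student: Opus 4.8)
The plan is to extract the two inclusions packaged in the equality $\gen(M)=\gen(\Cal X)$ and then transport the whole problem into the endomorphism ring of $M$, where locality makes Nakayama's lemma available. First I would unpack the hypothesis via Remark~\ref{rem_closure-of-add}. On one hand, $M\in\gen(\Cal X)$ produces finitely many $X_1,\dots,X_m\in\Cal X$ and an epimorphism $q\colon\bigoplus_{i=1}^m X_i\twoheadrightarrow M$; write $q_i\colon X_i\to M$ for its components. On the other hand, each $X_i\in\Cal X\subseteq\gen(\Cal X)=\gen(M)$, so by Remark~\ref{rem_closure-of-add}(i) there is an epimorphism $s_i\colon M^{n_i}\twoheadrightarrow X_i$ for some $n_i\in\mathbb N$, with components $s_{il}\colon M\to X_i$ for $l=1,\dots,n_i$. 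Composing, $\Phi:=q\circ\bigl(\bigoplus_i s_i\bigr)\colon\bigoplus_i M^{n_i}\twoheadrightarrow M$ is again an epimorphism, and its restriction to the $l$-th copy of $M$ inside $M^{n_i}$ is the endomorphism $\phi_{il}:=q_i\circ s_{il}\in\operatorname{End}_A(M)$. Surjectivity of $\Phi$ is exactly the assertion $\sum_{i,l}\im\phi_{il}=M$.

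The heart of the argument is to show that some $\phi_{il}$ is an isomorphism. Since $M$ is indecomposable and finite-dimensional, $\Gamma:=\operatorname{End}_A(M)$ is a local finite-dimensional $k$-algebra, so its radical $J:=\rad\Gamma$ is nilpotent and consists precisely of the non-isomorphisms. Viewing $M$ as a finitely generated left $\Gamma$-module, Nakayama's lemma gives $JM\subsetneq M$, where $JM=\sum_{\psi\in J}\im\psi$ as an $A$-submodule of $M$. If every $\phi_{il}$ lay in $J$, then $\sum_{i,l}\im\phi_{il}\subseteq JM\subsetneq M$, contradicting surjectivity of $\Phi$. Hence some $\phi_{i_0 l_0}=q_{i_0}\circ s_{i_0 l_0}$ is a unit of $\Gamma$, i.e.\ an automorphism of $M$. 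This forces $s_{i_0 l_0}\colon M\to X_{i_0}$ to be a split monomorphism; since $X_{i_0}$ is indecomposable and $M\neq 0$, it must in fact be an isomorphism, so $M\cong X_{i_0}\in\Cal X$. The ``in particular'' clause is then the special case $\Cal X=\{N\}$.

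I expect the main obstacle to be resisting the (false) stronger claim that some single $X_i$ already surjects onto $M$: this can fail exactly when $\Top M$ is not simple, so a direct top/superfluous-radical argument on $M$ itself does not close. The correct move is to pass to $\operatorname{End}_A(M)$, where locality and nilpotence of the radical make Nakayama's lemma applicable to $M$ as a $\Gamma$-module; the only routine points needing care are verifying that the composite $\Phi$ is genuinely surjective and that $JM=\sum_{\psi\in J}\im\psi$ is a proper $A$-submodule of $M$.
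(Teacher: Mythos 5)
Your proof is correct, and it reaches the same pivotal conclusion as the paper — some component endomorphism of $M$ arising from the composite epimorphism must be an automorphism — but by a genuinely different tool. The paper also composes the two epimorphisms to get an epimorphism $M^{mn}\to M$ with component endomorphisms $p_1,\dots,p_{mn}$, but it rules out the case that all $p_i$ are non-automorphisms via the Harada--Sai lemma: stacking copies of $p$ into an endomorphism $f$ of $M^{mn}$ whose entries are all non-isomorphisms between indecomposables, a sufficiently high power of $f$ is zero, contradicting that $f$ is an epimorphism. You instead localise the problem in $\Gamma=\operatorname{End}_A(M)$: since $M$ is indecomposable and finite-dimensional, $\Gamma$ is local with nilpotent radical $J$ consisting exactly of the non-automorphisms, so if every $\phi_{il}$ lay in $J$ then $M=\sum_{i,l}\im\phi_{il}\subseteq JM\subsetneq M$ (by Nakayama, or just nilpotence of $J$), a contradiction. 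Your route replaces Harada--Sai — a statement about arbitrary chains of non-isomorphisms between possibly distinct indecomposables of bounded length — by the more elementary Fitting-type fact for a single indecomposable, which is all that is needed here since every component is an endomorphism of the same module $M$; this makes your argument somewhat more self-contained, while the paper's version generalises more readily to situations where the intermediate objects genuinely vary. All the delicate points in your write-up check out: $M$ is finitely generated over $\Gamma$ (it is finite-dimensional over $k$), $JM=\sum_{\psi\in J}\im\psi$ is an $A$-submodule because each $\psi$ is $A$-linear, and a unit $\phi_{i_0l_0}=q_{i_0}\circ s_{i_0l_0}$ splits $s_{i_0l_0}$, whence $M$ is a nonzero direct summand of the indecomposable $X_{i_0}$ and so $M\cong X_{i_0}$, with the ``in particular'' clause as the case $\Cal X=\{N\}$, exactly as in the paper.
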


\begin{proof}
    By assumption, there is an object $N=N_1\oplus\dots\oplus N_t$, with $N_1,\dots,N_t$ indecomposable modules in $\Cal X$ and an epimorphism $\nu: N^n\to M$, for some integer $n\ge 1$. Moreover, as $N\in\gen (M)$, there is an epimorphism $\pi: M^m\to N$ for some integer $m\ge 1$.

    Consider the epimorphism $p:M^{mn}\to M$, obtained by composing the direct sum of $mn$ copies of $\nu$ with $\pi$. We have that $p=(p_1,\dots,p_{mn}):M^{mn}\to M$, where the components $p_1,\dots,p_{mn}$ are endomorphisms of $M$.  Assume for a contradiction that $p_i:M\to M$ is not an automorphism for each $i\in\{1,\dots,mn\}$, and consider the epimorphism
    $$
    f:=\begin{psmallmatrix}
        p \\ \vdots\\ p
    \end{psmallmatrix}: M^{mn}\to M^{mn},
    $$
    whose $mn$ components are copies of $p$. 

    By the Harada-Sai Lemma, see for example~\cite[Corollary~VI.1.3]{ARS}, composing $f$ with itself sufficiently many times gives the zero morphism (as all of its components are zero morphisms). This is a contradiction to the fact that $f$ is an epimorphism. Hence, for at least one $i\in\{1,\dots,mn\}$, we have that $p_i:M\to M$ is an automorphism. Since $p_i$ factors through $N$ by construction, it follows that $M$ is isomorphic to a direct summand $N_j$ of $N$. This completes the proof of the lemma as the last sentence in the statement is the special case when $\Cal X=\{N\}$ for $N$ indecomposable.
\end{proof}

\begin{theorem}\label{thm_joinirr}
   There is a bijection between the isomorphism classes of indecomposable modules in $\mod A$ and the completely join-irreducible elements of the lattice $\Cal L_t(A)$, given by $M\mapsto \gen(M).$
\end{theorem}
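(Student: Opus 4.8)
The plan is to verify four things: that the assignment $M \mapsto \gen(M)$ is well defined on isomorphism classes, that it is injective, that its image consists of completely join-irreducible elements, and that every completely join-irreducible element arises this way. Well-definedness is immediate, since isomorphic modules generate the same pretorsion class, and injectivity on isomorphism classes is exactly the final assertion of Lemma~\ref{lemma_indec_ess_gen}. So the substantive work is to identify the image of the map with the set of completely join-irreducible elements of $\Cal L_t(A)$.

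For the first inclusion, I fix an indecomposable $M$ and let $\Cal X_M$ denote the set of isomorphism classes of indecomposable modules lying in $\gen(M)$, so that $\gen(M) = \add(\Cal X_M)$ by Lemma~\ref{lemma_gen_tor} and $M \in \Cal X_M$. Note $\gen(M)$ is not the bottom element, as it contains the nonzero module $M$. I would propose $x_0 := \gen(\Cal X_M \setminus \{M\})$ as the witnessing element in the definition of completely join-irreducible. First I observe that any pretorsion class $\Cal T$ with $\Cal T \subsetneq \gen(M)$ cannot contain $M$: otherwise $\gen(M) \subseteq \Cal T$ by minimality of $\gen(M)$, contradicting $\Cal T \subsetneq \gen(M)$. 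Hence, again by Lemma~\ref{lemma_gen_tor}, all indecomposables of such a $\Cal T$ lie in $\Cal X_M \setminus \{M\}$, giving $\Cal T \subseteq x_0$. It then remains to check $x_0 \subsetneq \gen(M)$, that is, $M \notin x_0$. This is the crux: if $M \in x_0 = \gen(\Cal X_M \setminus \{M\})$, then $\gen(M) \subseteq \gen(\Cal X_M \setminus \{M\}) \subseteq \gen(M)$, so $\gen(M) = \gen(\Cal X_M \setminus \{M\})$, and Lemma~\ref{lemma_indec_ess_gen} forces $M$ to be isomorphic to a member of $\Cal X_M \setminus \{M\}$, which is absurd. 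Thus $a < \gen(M)$ if and only if $a \le x_0$, so $\gen(M)$ is completely join-irreducible.

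For the reverse inclusion, I take a completely join-irreducible element $\Cal T$ and write $\Cal T = \add(\Cal X) = \gen(\Cal X)$ for a set $\Cal X$ of indecomposables, using Lemma~\ref{lemma_gen_tor}; then $\Cal T = \bigvee_{X \in \Cal X} \gen(X)$ in $\Cal L_t(A)$, since the smallest pretorsion class containing all the $\gen(X)$ is exactly the one containing $\Cal X$. As $\Cal T$ is completely join-irreducible and the lattice is complete (Proposition~\ref{prop_complete_lattice_torsionclasses}), we have $\bigvee_{a < \Cal T} a < \Cal T$. If every $\gen(X)$ were strictly below $\Cal T$, then each would be $\le \bigvee_{a < \Cal T} a < \Cal T$, forcing $\Cal T = \bigvee_{X} \gen(X) < \Cal T$, a contradiction. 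Hence $\gen(X) = \Cal T$ for some indecomposable $X \in \Cal X$, exhibiting $\Cal T$ in the image of the map.

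I expect the only genuinely non-formal step to be the strictness $M \notin x_0$ in the first inclusion: this is precisely the statement that the indecomposable $M$ is \emph{essential} in $\gen(M)$, and it is supplied by Lemma~\ref{lemma_indec_ess_gen} (and ultimately the Harada--Sai argument). Everything else is lattice bookkeeping resting on the fact that, by Lemma~\ref{lemma_gen_tor} and Remark~\ref{rem_closure-of-add}, a pretorsion class is completely determined by the indecomposable modules it contains. The only mild care needed is that the joins above range over possibly infinite families, which is harmless since $\Cal L_t(A)$ is a complete lattice.
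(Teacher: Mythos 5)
Your proof is correct and takes essentially the same route as the paper: both directions rest on Lemma~\ref{lemma_indec_ess_gen} (ultimately Harada--Sai) together with Lemma~\ref{lemma_gen_tor}, and your explicit witness $x_0=\gen(\Cal X_M\setminus\{M\})$ is precisely the join $\bigvee_{\Cal T\subsetneq \gen(M)}\Cal T$ that the paper shows to be strictly below $\gen(M)$. The only cosmetic differences are that the paper unwinds that join via a finite epimorphism onto $M$ where you pass directly to $\gen(M)=\gen(\Cal X_M\setminus\{M\})$, and that for the converse the paper picks a single indecomposable $N\in\Cal T\setminus\Cal T_0$ while you write $\Cal T=\bigvee_{X\in\Cal X}\gen(X)$; the content is identical.
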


\begin{proof}
    Let $M$ be an indecomposable module and assume that $\gen( M )=\bigvee_{\Cal T \subsetneq \gen(M)} \Cal T$. Then there exist pretorsion classes $\Cal T_1,\dots, \Cal T_m \subsetneq \gen( M )$, positive integers $k_1,\dots,k_m$ and indecomposable modules $T_1^{(i)}, \dots, T_{k_i}^{(i)} \in \Cal T_i$ for every $i=1,\dots, m$ with an epimorphism
    $$
    \bigoplus_{i=1}^m \bigoplus_{j=1}^{k_i} T_j^{(i)} \to M.
    $$
    Therefore, $\gen( T_1^{(1)}, \dots, T_{k_1}^{(1)}, \dots, T_1^{(m)}, \dots, T_{k_m}^{(m)})=\gen( M )$ and by Lemma~\ref{lemma_indec_ess_gen}, we have that $M \cong T_j^{(\ell)}$ for some $\ell \in \{1,\dots,m\}$ and $j \in \{1, \dots, k_\ell\}$, which is a contradiction as $M \notin \Cal T_i$ for every $i=1, \dots, m$. Hence $\gen( M )$ is completely join-irreducible.

    Let now $\Cal T$ be a pretorsion class not of the form $\gen( M)$ for $M$ an indecomposable module. Suppose that $\Cal T$ is completely join-irreducible. Then, there exists an element $\Cal T_0 \subsetneq \Cal T$ such that for every $\Cal T' \in \Cal L_t(A)$, $\Cal T' \subsetneq \Cal T$ if and only if $\Cal T' \subseteq\Cal T_0$. As $\Cal T_0 \subsetneq \Cal T$, there exists an indecomposable module $N \in \Cal T \setminus \Cal T_0$, and $\gen( N ) \neq \Cal T$ by assumption. We therefore reach a contradiction since $\gen( N ) \nsubseteq \Cal T_0$ but $\gen( N ) \subsetneq \Cal T$. Thus $\Cal T$ is not completely join-irreducible.

    Finally, since if $M$ and $N$ are two non-isomorphic indecomposable modules then $\gen( M ) \neq \gen( N )$ by Lemma~\ref{lemma_indec_ess_gen}, the map is a bijection.
\end{proof}

The above theorem resembles previous results on the lattice $\tors A$ of (classic) torsion classes of $\mod A$ (see Section~\ref{subsection_torsA}): the completely join-irreducible elements in $\tors A$ are in bijection with the isomorphism classes of bricks in $\mod A$. Since bricks are special indecomposable modules, it is natural to ask when the completely join-irreducible elements of the two lattices coincide.

Recall that a module $M$ is {\em Gen-minimal} if whenever $M= M'\oplus M''$, then $M'\not\in\gen( M'' )$ \cite[Chapter~VI.6]{ASS}. Clearly, every indecomposable module is in particular Gen-minimal.
Recall also that a module $M$ is \emph{$\tau$-rigid} if $\Hom(M,\tau M)=0$, where $\tau$ is the Auslander-Reiten translation \cite{AIR}.

The following result is basically contained in \cite[Section~5]{AS}.

\begin{Prop}\label{prop_taurigid}
    Let $M$ be a Gen-minimal module in $\mod{A}$. Then $\gen( M)$ is a torsion class if and only if $M$ is a $\tau$-rigid module.
    In particular, if $A$ is a locally representation directed algebra, then $\gen( M)$ is a torsion class for every indecomposable module $M$ in $\mod{A}$.
\end{Prop}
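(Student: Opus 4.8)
The plan is to exploit that $\gen(M)$ is automatically a pretorsion class: by Proposition~\ref{prop_noetherian} it is closed under quotients and finite direct sums, so $\gen(M)$ is a torsion class if and only if it is closed under extensions. The whole argument then reduces to testing this extension-closure, and the main tool throughout is the Auslander--Reiten formula $\operatorname{Ext}^1_A(X,Y)\cong D\,\overline{\Hom}_A(Y,\tau X)$, where $\overline{\Hom}$ denotes morphisms modulo those factoring through injectives.

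For the implication that $\tau$-rigidity forces $\gen(M)$ to be a torsion class, I would first upgrade $\tau$-rigidity to $\operatorname{Ext}$-projectivity. Given $X\in\gen(M)$, fix an epimorphism $M^n\to X$; precomposing any morphism $X\to\tau M$ with it yields a morphism $M^n\to\tau M$, which is zero because $\Hom_A(M,\tau M)=0$, whence $X\to\tau M$ vanishes. Thus $\Hom_A(\gen(M),\tau M)=0$, and the Auslander--Reiten formula gives $\operatorname{Ext}^1_A(M,\gen(M))=0$. A standard lifting argument then yields extension-closure: given a short exact sequence $0\to X\to E\to Y\to 0$ with $X,Y\in\gen(M)$, one lifts an epimorphism $M^n\to Y$ along $E\to Y$ using $\operatorname{Ext}^1_A(M^n,X)=0$ and combines it with $X\hookrightarrow E$ to obtain an epimorphism onto $E$ from a direct sum of copies of $M$ (using $X\in\gen(M)$). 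Note that this direction does not require Gen-minimality.

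For the converse, suppose $\gen(M)$ is a torsion class and $M$ is Gen-minimal; I want to deduce that $M$ is $\tau$-rigid. Since $\gen(M)$ is functorially finite, the Auslander--Smal\o{} theory of $\operatorname{Ext}$-projective objects in torsion classes (\cite[Section~5]{AS}, cf.\ \cite{AIR}) furnishes a basic $\operatorname{Ext}$-projective generator $P$ with $\gen(P)=\gen(M)$, and $P$ is $\tau$-rigid. The key step is to show, using Gen-minimality, that $\add(M)=\add(P)$: every indecomposable $\operatorname{Ext}$-projective of $\gen(M)$ must occur as an $\add$-summand of any module generating $\gen(M)$, while Gen-minimality forbids $M$ from carrying any additional, non-$\operatorname{Ext}$-projective indecomposable summand. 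As $\tau$-rigidity depends only on the additive closure, $\add(M)=\add(P)$ then gives that $M$ is $\tau$-rigid. I expect the main obstacle to be precisely this identification $\add(M)=\add(P)$: the difficulty is that kernels of epimorphisms between modules of a torsion class need not remain in the torsion class, so recovering the $\operatorname{Ext}$-projective summands requires passing to the torsion radical of the relevant kernels rather than a naive use of $\operatorname{Ext}$-projectivity.

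Finally, for the ``in particular'' statement, every indecomposable module is Gen-minimal, so by the equivalence just established it suffices to show that each indecomposable module over a locally representation directed algebra is $\tau$-rigid. This is a known feature of such algebras, all of whose indecomposables are bricks over a representation-finite algebra with sufficiently rigid module category (see \cite[Section~5]{AS} and \cite[Chapter~IX]{ASS}); feeding it back into the main equivalence yields that $\gen(M)$ is a torsion class for every indecomposable $M$.
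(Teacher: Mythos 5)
Your forward direction is sound: the reduction to extension-closure, the vanishing $\Hom(\gen(M),\tau M)=0$ obtained by precomposing with an epimorphism $M^n\to X$, and the lifting argument via the Auslander--Reiten formula together give a complete, self-contained proof that $\tau$-rigidity of $M$ forces $\gen(M)$ to be a torsion class, and you are right that Gen-minimality plays no role there. The ``in particular'' clause is also fine and matches the paper. Note, though, that the paper proves the whole proposition by citation: the equivalence is exactly \cite[Propositions~5.5 and~5.8]{AS}, and the $\tau$-rigidity of indecomposables over locally representation directed algebras is taken from \cite{D91} and \cite[Proposition~4.1]{MP}; so your forward direction is an honest unpacking of that material.

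The converse, however, has a genuine gap, and one of its two pillars is false. You claim that every indecomposable Ext-projective of $\gen(M)$ must occur as an $\add$-summand of \emph{any} module generating $\gen(M)$, so as to conclude $\add(M)=\add(P)$ for the basic Ext-projective generator $P$. Counterexample: let $A=k(1\to 2)$ and $M=P_1={\begin{smallmatrix}1\\2\end{smallmatrix}}$. Then $\gen(M)=\add(P_1\oplus S_1)$ is a torsion class (one checks $\operatorname{Ext}^1_A(S_1,P_1)=\operatorname{Ext}^1_A(S_1,S_1)=0$, and $P_1$ is projective), and both $P_1$ and $S_1$ are Ext-projective in it, so $P=P_1\oplus S_1$; yet $M=P_1$ is Gen-minimal (being indecomposable), generates this torsion class, and does not have $S_1$ as a summand. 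So $\add(M)=\add(P)$ fails even in this trivially $\tau$-rigid example. What your argument actually needs is only the inclusion $\add(M)\subseteq\add(P)$, i.e.\ that Gen-minimality forces every indecomposable summand of $M$ to be Ext-projective in $\gen(M)$: once $M\in\add(P)$, the space $\Hom_A(M,\tau M)$ is a direct summand of $\Hom_A(P^a,(\tau P)^a)=0$, so $M$ is $\tau$-rigid. But this inclusion is precisely the nontrivial content of \cite[Proposition~5.5]{AS} (combined with \cite[Proposition~5.8]{AS}, which converts Ext-projectivity in $\gen(M)$ into $\Hom_A(M,\tau M)=0$), and in your text it is asserted in a single sentence; the hint about torsion radicals of kernels is not a proof and does not touch the real difficulty, namely ruling out a non-split extension $0\to X\to E\to M_i\to 0$ with $X\in\gen(M)$ whose middle term (which lies in $\gen(M)$ by extension-closure) is covered by copies of $M$ through jointly epimorphic non-invertible maps --- a Harada--Sai-type obstruction in the spirit of Lemma~\ref{lemma_indec_ess_gen}. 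As it stands, the direction ``$\gen(M)$ torsion class $\Rightarrow$ $M$ $\tau$-rigid'' is unproved.
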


\begin{proof}
    By \cite[Proposition~5.5 and Proposition~5.8]{AS}, we have that $M$ is $\tau$-rigid if and only if $\gen( M)$ is a torsion class (notice that in \cite[Section~5]{AS} the module $M$ is assumed to be Gen-minimal).
    
    The last sentence follows from the fact that if $A$ is a locally representation directed algebra, then every indecomposable module is $\tau$-rigid, see \cite{D91} and \cite[Proposition~4.1]{MP}.
\end{proof}

There are plenty of cases where some indecomposable modules in $\mod{A}$ are not $\tau$-rigid, and so $A$ is not locally representation directed. In these cases some indecomposable modules are not bricks, and so there are more join-irreducibles in the lattice of pretorsion classes than in the lattice of torsion classes of $\mod{A}$.
The following is an easy, small example.
\begin{example}\label{example_loop}
    Consider the quiver
    \[Q= \xymatrix{1 \ar@(ul,ur)^{\epsilon}}\]
    and the algebra $A=kQ/\Cal I$, where $\Cal I$ is the admissible ideal generated by $\epsilon^2$. Up to isomorphism, there are two indecomposable modules in $\mod{A}$, that is
    \[
    {\begin{smallmatrix}
        1
    \end{smallmatrix}}
    =\xymatrix{k \ar@(ul,ur)^{0} &\text{and}&
    {\begin{smallmatrix}
       1\\1 
    \end{smallmatrix}}=& k^2. \ar@(ul,ur)^{{\begin{psmallmatrix}
        0&0\\1&0
    \end{psmallmatrix}}}}
    \]
    Note that the first one is a brick but it is not $\tau$-rigid, in fact $\tau (\begin{smallmatrix}
        1
    \end{smallmatrix})=\begin{smallmatrix}
        1
    \end{smallmatrix}$ and so $\Hom (\begin{smallmatrix}
        1
    \end{smallmatrix}, \tau (\begin{smallmatrix}
        1
    \end{smallmatrix}))\neq 0$. On the other hand, the second one is not a brick, but it is $\tau$-rigid, in fact as it is projective we have that $\tau(\begin{smallmatrix}
       1\\1 
    \end{smallmatrix})=0$.
    There is only one join-irreducible in the lattice of torsion classes, that is $ \Cal T(\begin{smallmatrix}
        1
    \end{smallmatrix})= \mod{A}$, while there are two in the lattice of pretorsion classes, that is $\gen( \begin{smallmatrix}
        1
    \end{smallmatrix})=\add(\begin{smallmatrix}
        1
    \end{smallmatrix})$ and $\gen(\begin{smallmatrix}
        1\\1
    \end{smallmatrix})=\mod{A}$.
    As predicted by Proposition~\ref{prop_taurigid}, since $\begin{smallmatrix}
        1
    \end{smallmatrix}$ is not $\tau$-rigid, then $\gen( \begin{smallmatrix}
        1
    \end{smallmatrix})$ is not a torsion class.
\end{example}

Recall that the lattice of torsion classes is a subposet (but not a sublattice in general) of the lattice of pretorsion classes. We now show that locally representation directed algebras are exactly the algebras $A$ such that the completely join-irreducible elements of $\tors A$ coincide with the completely join-irreducible elements of $\Cal L_t(A)$.

\begin{theorem}\label{thm_join_irred_coincide}
    The completely join-irreducibles of $\Cal L_t(A)$ coincide with the completely join-irreducibles of the lattice $\tors A$ of torsion classes of $\mod{A}$ if and only if $A$ is a locally representation directed algebra.
\end{theorem}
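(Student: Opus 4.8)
The plan is to read the statement as an equality of two sets of subcategories of $\mod A$. By Theorem~\ref{thm_joinirr} the completely join-irreducibles of $\Cal L_t(A)$ are exactly the pretorsion classes $\gen(M)$ with $M$ indecomposable, and I would use the classical bijection (\cite[Theorem~1.5]{BCZ}, \cite[Theorem~3.3]{DIRRT}) in the precise form that identifies the completely join-irreducibles of $\tors A$ with the torsion classes $\Cal T(B)$, $B$ ranging over (isomorphism classes of) bricks, via $B \mapsto \Cal T(B)$. The two facts I would lean on throughout are that $\gen(\Cal S) \subseteq \Cal T(\Cal S)$ always holds, and consequently that a pretorsion class $\gen(N)$ equals $\Cal T(N)$ \emph{precisely when} it happens to be a torsion class (being then the smallest torsion class containing $N$). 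The whole proof is a matter of passing between the $\gen$-description and the $\Cal T$-description and invoking the uniqueness supplied by Lemma~\ref{lemma_indec_ess_gen}.

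For the ``if'' direction, suppose $A$ is locally representation directed. Then every indecomposable $M$ is a brick, and by Proposition~\ref{prop_taurigid} every $\gen(M)$ is a torsion class, so $\gen(M) = \Cal T(M)$. Substituting into the two descriptions above gives
$$\{\gen(M)\mid M \text{ indecomposable}\} = \{\Cal T(M)\mid M \text{ indecomposable}\} = \{\Cal T(B)\mid B \text{ a brick}\},$$
where the last equality is exactly the hypothesis that indecomposables and bricks coincide. The outer sets are the completely join-irreducibles of $\Cal L_t(A)$ and of $\tors A$ respectively, so they coincide.

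For the ``only if'' direction I would argue pointwise and directly, with no appeal to counting. Assuming the two sets of completely join-irreducibles coincide, fix an arbitrary indecomposable $M_0$ and aim to show it is a brick. Since $\gen(M_0)$ is completely join-irreducible in $\Cal L_t(A)$, it is one in $\tors A$, hence $\gen(M_0) = \Cal T(B_0)$ for some brick $B_0$; in particular $\gen(M_0)$ is a torsion class. Now $B_0$ is itself indecomposable, so $\gen(B_0)$ is again completely join-irreducible in $\Cal L_t(A)$, hence in $\tors A$, hence a torsion class containing $B_0$, which forces $\gen(B_0) = \Cal T(B_0)$. Combining the two identities yields $\gen(B_0) = \Cal T(B_0) = \gen(M_0)$, and Lemma~\ref{lemma_indec_ess_gen} then gives $M_0 \cong B_0$, so $M_0$ is a brick. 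As $M_0$ was arbitrary, $A$ is locally representation directed.

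The main difficulty to keep in mind is that one cannot settle the ``only if'' direction by a cardinality comparison: equality of the two sets does give a bijection between bricks and indecomposables, but in infinite representation type this alone would not force each indecomposable to \emph{be} a brick. This is why the forward argument must be run element by element, and why the crucial link is the chain $\gen(M_0) = \Cal T(B_0) = \gen(B_0)$, which uses both the explicit $\Cal T(B)$-form of the classical bijection and the rigidity provided by Lemma~\ref{lemma_indec_ess_gen} to identify $M_0$ with the brick $B_0$. Getting these identifications to line up — in particular recognizing that $\gen(B_0)$ is forced to be a torsion class purely because it is a completely join-irreducible of $\tors A$ — is the one step that needs care.
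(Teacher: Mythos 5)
Your proof is correct and takes essentially the same approach as the paper: the ``if'' direction is identical (Theorem~\ref{thm_joinirr}, Proposition~\ref{prop_taurigid}, and the description of the completely join-irreducibles of $\tors A$ as the classes $\Cal T(B)$ for $B$ a brick), while your ``only if'' argument is the contrapositive of the paper's, built from the same chain of identifications $\gen(M_0)=\Cal T(B_0)=\gen(B_0)$ and the rigidity of Lemma~\ref{lemma_indec_ess_gen}. Your key step---that $\gen(B_0)$, being a completely join-irreducible of $\Cal L_t(A)$ and hence by hypothesis an element of $\tors A$, is forced to equal $\Cal T(B_0)$---is precisely the paper's observation that $\gen(B)\subsetneq \Cal T(B)$ would exclude $\gen(B)$ from $\tors A$.
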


\begin{proof}
    Suppose that $A$ is not a locally representation directed algebra, then there exists at least one indecomposable module $M$ in $\mod{A}$ that is not a brick. If $\gen (M)$ is not a torsion class or it is not completely join-irreducible in $\tors A$, then we are done. So we can assume that $\gen (M)=\Cal T(M)$ is completely join-irreducible in $\tors A$. By \cite[Theorem~3.3]{DIRRT} there exists a brick $B$ in $\mod A$ such that $\Cal T (M)=\Cal T(B)$ and $\gen (B) \neq \gen (M)$ (otherwise $B\cong M$ is a brick by Lemma~\ref{lemma_indec_ess_gen}). Then $\gen (B)\subsetneq \Cal T(B)$, hence $\gen (B)$ is a completely join-irreducible in $\Cal L_t(A)$ that is not an element (and hence not a completely join-irreducible) in $\tors A$.

    Suppose now that $A$ is a locally representation directed algebra.
    By Theorem~\ref{thm_joinirr}, the completely join-irreducibles in $\Cal L_t(A)$ are of the form $\gen( M)$ for $M$ indecomposable in $\mod A$ and $\gen( M)\neq \gen( N)$ if $M\not\cong N$.
    Moreover, by Proposition~\ref{prop_taurigid}, each such $\gen( M)$ is a torsion class and it is clearly the smallest torsion class containing $M$, that is, $\gen (M)=\Cal T(M)$. 
    The statement then follows by \cite[Theorem~3.3]{DIRRT}.
\end{proof}

\begin{corollary}\label{corollary_no_infinite_rep_type}
    If the lattice $\Cal L_t(A)$ is infinite, then its completely join-irreducible elements do not coincide with the completely join-irreducible elements of $\tors A$.    
\end{corollary}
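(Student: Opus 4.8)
The plan is to argue by contraposition, reducing the whole statement to Theorem~\ref{thm_join_irred_coincide} together with the fact, recalled in the background, that every locally representation directed algebra is representation-finite. Suppose the completely join-irreducible elements of $\Cal L_t(A)$ coincide with those of $\tors A$. Then Theorem~\ref{thm_join_irred_coincide} applies directly and tells us that $A$ is a locally representation directed algebra, hence representation-finite: $\mod A$ has only finitely many isomorphism classes of indecomposable modules.

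From here the goal is to show that the finiteness of the set of indecomposables forces $\Cal L_t(A)$ itself to be a finite lattice, which contradicts the hypothesis that it is infinite. This is the one step requiring a small argument, and it is delivered by Lemma~\ref{lemma_gen_tor}: every pretorsion class $\Cal T$ equals $\add(\Cal X)$ for the class $\Cal X$ of indecomposable modules that lie in $\Cal T$. Thus a pretorsion class is completely determined by the set of (isomorphism classes of) indecomposables it contains. Since there are only finitely many such isomorphism classes, there are only finitely many possible sets $\Cal X$, hence only finitely many pretorsion classes, so $\Cal L_t(A)$ is finite.

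Assembling the chain gives exactly the contrapositive of the claim: if $\Cal L_t(A)$ is infinite, then $A$ is not representation-finite, so in particular not locally representation directed, and therefore by Theorem~\ref{thm_join_irred_coincide} the completely join-irreducibles of $\Cal L_t(A)$ and of $\tors A$ cannot coincide. I do not expect any genuine obstacle: each implication is either a cited fact (locally representation directed $\Rightarrow$ representation-finite, and Theorem~\ref{thm_join_irred_coincide}) or an immediate consequence of Lemma~\ref{lemma_gen_tor}. The only mild point to state carefully is that a representation-finite algebra admits only finitely many subcategories of the form $\add(\Cal X)$, which is precisely the content of Lemma~\ref{lemma_gen_tor}.
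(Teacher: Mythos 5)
Your proof is correct and follows essentially the same route as the paper's: contraposition via Theorem~\ref{thm_join_irred_coincide} and the fact that locally representation directed algebras are representation-finite by \cite{D91}. The only difference is that you explicitly justify the finiteness of $\Cal L_t(A)$ through Lemma~\ref{lemma_gen_tor} (a pretorsion class is determined by its indecomposables), a step the paper leaves implicit, which is a reasonable bit of added care rather than a genuine divergence.
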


\begin{proof}
    If the completely join-irreducibles of $\Cal L_t(A)$ coincide with the completely join-irreducibles of $\tors A$, then $A$ is a locally representation directed algebra, hence $A$ is representation-finite by \cite{D91}, and so both lattices $\Cal L_t(A)$ and $\tors A$ are finite.
\end{proof}

\section{Distributivity of \texorpdfstring{$\Cal L_t(A)$}{Lt(A)}}~\label{sec:dist}

The aim of this section is to characterise when the lattice $\Cal L_t(A)$ is distributive. We will show this happens exactly when $\Cal T_1 \vee \Cal T_2 = \gen( \Cal T_1 \cup \Cal T_2 )=\add(\Cal T_1 \cup \Cal T_2)$ for any pair of pretorsion classes $\Cal T_1$ and $\Cal T_2$ in $\Cal L_t(A)$.
Moreover, we give a full classification of the algebras $A$ such that $\Cal L_t(A)$ is a distributive lattice.

Note that, thanks to Proposition \ref{remark_directprod_distributive} below, in order to determine whether $\Cal L_t(A)$ is distributive, it is enough to study connected algebras $A$. Hence, in this section, we may assume $A$ is connected.

Recall that if $L_1$ and $L_2$ are two lattices, then their product $L_1\times L_2$ is the lattice whose underlying set is the cartesian product of $L_1$ and $L_2$ and the preorder is given, for every $(a,b),(c,d) \in L_1\times L_2$, by $(a,b)\leq (c,d)$ if and only if $a\leq c$ and $b\leq d$.

\begin{Prop}\label{remark_directprod_distributive}
    If $A=\prod_{i=1}^m A_i$ is a finite direct product of algebras, then
    $$
    \Cal L_t(A)\cong \Cal L_t(A_1)\times \dots \times \Cal L_t(A_m)
    $$
    as lattices. In particular, $\Cal L_t(A)$ is distributive if and only if $\Cal L_t(A_i)$ is distributive for each $i=1,\dots, m$. 
\end{Prop}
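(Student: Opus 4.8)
The plan is to establish the lattice isomorphism first and then deduce the distributivity statement as a formal consequence of how meets and joins behave in a product of lattices. By an evident induction on $m$, it suffices to treat the case $A=A_1\times A_2$. The structural input I would use is the standard decomposition of $\mod A$ induced by the central orthogonal idempotents $e_1=(1,0)$ and $e_2=(0,1)$: every module $M$ splits canonically as $M=Me_1\oplus Me_2$ with $Me_i\in\mod A_i$, there are no nonzero morphisms between a module over $A_1$ and a module over $A_2$, and --- crucially --- every submodule $N\le M$ inherits this decomposition, $N=Ne_1\oplus Ne_2$ with $Ne_i\le Me_i$, so that quotients decompose as $M/N\cong (Me_1/Ne_1)\oplus(Me_2/Ne_2)$. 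In categorical terms this is the equivalence $\mod A\cong\mod A_1\times\mod A_2$.

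The next step is to define the candidate isomorphism. To a pretorsion class $\Cal T$ of $\mod A$ I would assign the pair $(\Cal T_1,\Cal T_2)$ with $\Cal T_i:=\Cal T\cap\mod A_i$. Since $\Cal T$ is closed under quotients and finite direct sums (Proposition~\ref{prop_noetherian}), each $\Cal T_i$ is closed under quotients and finite direct sums inside $\mod A_i$, hence is a pretorsion class of $\mod A_i$. In the reverse direction, to a pair $(\Cal T_1,\Cal T_2)$ of pretorsion classes I would assign $\add(\Cal T_1\cup\Cal T_2)$. Using the decomposition of submodules above, a quotient of a module in $\add(\Cal T_1\cup\Cal T_2)$ again splits into a quotient of its $A_1$-part and its $A_2$-part, each lying in $\Cal T_1$, resp.\ $\Cal T_2$ by their closure under quotients; closure under finite direct sums is immediate. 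Hence $\add(\Cal T_1\cup\Cal T_2)$ is a pretorsion class of $\mod A$.

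I would then check these two assignments are mutually inverse. Because $\Cal T$ is closed under direct summands (being closed under quotients, cf.\ Lemma~\ref{lemma_gen_tor}) and every object of $\Cal T$ splits into its $A_1$- and $A_2$-parts, each summand again lies in $\Cal T$, giving $\Cal T=\add(\Cal T_1\cup\Cal T_2)$; conversely $(\add(\Cal T_1\cup\Cal T_2))\cap\mod A_i=\Cal T_i$ by the absence of cross-morphisms. Both assignments are plainly inclusion-preserving, and $\Cal T\subseteq\Cal T'$ holds iff $\Cal T_i\subseteq\Cal T_i'$ for $i=1,2$, so the bijection is an isomorphism of posets. A bijective order-preserving map of lattices whose inverse is order-preserving automatically preserves all meets and joins, so $\Cal L_t(A)\cong\Cal L_t(A_1)\times\Cal L_t(A_2)$ as lattices, and the general $m$ follows by induction.

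For the final assertion, I would invoke the elementary fact that in a finite product of lattices meets and joins are computed componentwise; hence the distributive law $x\wedge(y\vee z)=(x\wedge y)\vee(x\wedge z)$ holds in $\Cal L_t(A_1)\times\dots\times\Cal L_t(A_m)$ exactly when it holds in each factor, so the product is distributive iff every factor is. I do not expect any genuine obstacle here; the one step that deserves care --- and which I view as the crux --- is verifying that $\add(\Cal T_1\cup\Cal T_2)$ is closed under quotients, which rests entirely on the fact that submodules of an $A$-module respect the idempotent decomposition and therefore so do quotients.
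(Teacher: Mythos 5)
Your argument is correct and takes essentially the same route as the paper's proof: both rest on the decomposition $\mod A\cong \mod A_1\times\dots\times\mod A_m$ (which you make explicit via the central idempotents $e_i$), the absence of nonzero morphisms between the factors, and the correspondence $\Cal T\mapsto(\Cal T\cap \mod A_i)_i$ with inverse $(\Cal T_i)_i\mapsto \add(\Cal T_1\cup\dots\cup\Cal T_m)$, concluding with the elementary fact that a product of lattices is distributive if and only if each factor is. The only difference is presentational: you verify by hand, from the idempotent decomposition of submodules, that $\add(\Cal T_1\cup\Cal T_2)$ is closed under quotients, a point the paper leaves implicit in the categorical equivalence (it instead phrases the correspondence via the classes $\gen(\Cal X_i)$ of indecomposables in $\Cal T$).
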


\begin{proof}
    Recall that $\mod A$ is categorically equivalent to $ \mod A_1\times \dots \times\mod A_m$ and hence $\mod A_i$ can be viewed as a subcategory of $\mod A$ in the obvious way.
    In particular, there are no non-zero maps between $\mod A_i$ and $\mod A_j$ for $i\neq j$.
    
    Thus, if $\Cal T$ is a pretorsion class in $\Cal L_t(A)$, denoting by $\Cal X_i$ the class of indecomposable modules in $\Cal T$ that belong to $\mod A_i$, we can write
    $$
    \Cal T= \gen (\Cal X_1\cup\dots \cup \Cal X_m)=\add\Big(\bigcup_{i=1}^m \gen (\Cal X_i)\Big).
    $$
    Thus $\Cal T$ corresponds to the product $\gen(\Cal X_1)\times\dots \times \gen (\Cal X_m)$, via the equivalence $\mod A \cong \mod A_1\times \dots \times\mod A_m$. Conversely, if $\Cal T_i \in \Cal L_t(A_i)$ for $i=1,\dots,m$, then the corresponding pretorsion class in $\mod A$ is given by $\add(\Cal T_1 \cup \dots \cup \Cal T_m)$. Hence $\Cal L_t(A)\cong \Cal L_t(A_1)\times \dots \times \Cal L_t(A_m)$.
 
    The last assertion of the statement follows from the easy to check fact that a product of lattices is distributive if and only if so are its factors.
\end{proof}

\subsection{A characterisation of the distributivity of \texorpdfstring{$\Cal L_t(A)$}{LtA}}

\begin{remark}\label{rem_jointly-epi}
    Notice that, for every pair of pretorsion classes $\Cal T_1,\Cal T_2$ in $\Cal L_t(A)$, $\add(\Cal T_1 \cup \Cal T_2) \subseteq \gen( \Cal T_1 \cup \Cal T_2 )$  and that we have the equality if and only if $\add(\Cal T_1 \cup \Cal T_2)$ is closed under quotients. The latter happens if and only if there are no epimorphisms $T_1\oplus T_2 \to Q$, with $T_i \in \Cal T_i$, for $i=1,2$, and $Q \notin \add(\Cal T_1 \cup \Cal T_2)$ (equivalently, if and only if there are no jointly epimorphisms $e_i \colon T_i \to Q$, with $T_i \in \Cal T_i$, for $i=1,2$, and $Q \notin \add(\Cal T_1 \cup \Cal T_2)$).

    Moreover, it is easy to see that since $\mod A$ is a Krull-Schmidt category, the equality $\add(\Cal T_1 \cup \Cal T_2) = \gen( \Cal T_1 \cup \Cal T_2 )$ for every pair of pretorsion classes $\Cal T_1,\Cal T_2 \in\Cal L_t(A)$ implies the equality $\add(\bigcup_{i=1}^n\Cal T_i) = \gen( \bigcup_{i=1}^n\Cal T_i )$ for a finite number of pretorsion classes $\Cal T_1, \dots, \Cal T_n \in \Cal L_t(A)$.
\end{remark}

\begin{lemma} \label{lemma_two-implies-infinite}
    Assume that $\add(\Cal T_1 \cup \Cal T_2) = \gen( \Cal T_1 \cup \Cal T_2 )$ for every pair of pretorsion classes $\Cal T_1$ and $\Cal T_2$. Then, $\add(\bigcup_{i \in I}\Cal T_i)=\gen( \bigcup_{i \in I}\Cal T_i )$ for every family $\{T_i\}_{i \in I}\subseteq \Cal L_t(A)$.
\end{lemma}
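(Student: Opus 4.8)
The plan is to reduce an arbitrary family to a finite subfamily and then invoke the finite-family case already recorded in Remark~\ref{rem_jointly-epi}. The inclusion $\add(\bigcup_{i\in I}\Cal T_i)\subseteq\gen(\bigcup_{i\in I}\Cal T_i)$ is immediate, since $\gen$ of any class contains its $\add$; so the whole content lies in the reverse inclusion.

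First I would take an arbitrary $N\in\gen(\bigcup_{i\in I}\Cal T_i)$. By Remark~\ref{rem_closure-of-add}(ii), there exist finitely many modules $X_1,\dots,X_m\in\bigcup_{i\in I}\Cal T_i$ together with an epimorphism $\bigoplus_{j=1}^m X_j\to N$. The crucial point is that only finitely many modules are involved in witnessing membership in $\gen$, precisely because generation here is by quotients of \emph{finite} direct sums.

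Next, for each $j\in\{1,\dots,m\}$ I would choose an index $i_j\in I$ with $X_j\in\Cal T_{i_j}$. Since $j$ ranges over a finite set, all of the $X_j$ lie in the finite union $\Cal T_{i_1}\cup\dots\cup\Cal T_{i_m}$, and hence $N\in\gen(\Cal T_{i_1}\cup\dots\cup\Cal T_{i_m})$. The hypothesis, together with its finitary consequence noted in Remark~\ref{rem_jointly-epi}, yields $\gen(\Cal T_{i_1}\cup\dots\cup\Cal T_{i_m})=\add(\Cal T_{i_1}\cup\dots\cup\Cal T_{i_m})$, which is contained in $\add(\bigcup_{i\in I}\Cal T_i)$. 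Therefore $N\in\add(\bigcup_{i\in I}\Cal T_i)$, establishing the reverse inclusion.

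I do not expect a genuine obstacle here: the entire argument rests on the finitary description of $\gen$ in Remark~\ref{rem_closure-of-add}(ii), which forces each element of $\gen(\bigcup_{i\in I}\Cal T_i)$ to depend on only finitely many of the $\Cal T_i$. The one point requiring care is to cite the finite-family version from Remark~\ref{rem_jointly-epi} rather than re-deriving it, since the reduction from two to finitely many pretorsion classes is exactly where the Krull--Schmidt hypothesis on $\mod A$ is used.
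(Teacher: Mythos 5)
Your proof is correct and takes essentially the same route as the paper: both arguments reduce membership in $\gen\big(\bigcup_{i\in I}\Cal T_i\big)$ to a finite subfamily (the paper by verifying that $\add\big(\bigcup_{i\in I}\Cal T_i\big)$ is closed under quotients, you via the finitary description of $\gen$ from Remark~\ref{rem_closure-of-add}(ii)) and then invoke the finite-family equality recorded in Remark~\ref{rem_jointly-epi}. The two phrasings are interchangeable, and your care in citing the finite case rather than re-deriving it matches where the paper places the Krull--Schmidt input.
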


\begin{proof}
    Let $\Cal A:=\add(\bigcup_{i \in I}\Cal T_i) \subseteq \gen( \bigcup_{i \in I}\Cal T_i )$. In order to prove the statement, we only need to show that $\Cal A$ is closed under quotients. Consider an epimorphism $M \to N$ with $M \in \Cal A$. Then, there exist $i_1, \dots, i_m \in I$ and $M_{i_j} \in \Cal T_{i_j}$ for $j=1, \dots, m$ such that $M=M_{i_1}\oplus \cdots \oplus M_{i_m}$. Thus, $M \in \add(T_{i_1}\cup \dots \cup T_{i_m})=\gen( T_{i_1}\cup \dots \cup T_{i_m} )$ and therefore $N \in \add(T_{i_1}\cup \dots \cup T_{i_m}) \subseteq \Cal A$, as desired.
\end{proof}

\begin{theorem}\label{thm_distributive-characterisation}
    The lattice $\Cal L_t(A)$ of pretorsion classes is distributive if and only if $\add(\Cal T_1 \cup \Cal T_2) = \gen( \Cal T_1 \cup \Cal T_2 )$ for every pair of pretorsion classes $\Cal T_1$ and $\Cal T_2$ in $\Cal L_t (A)$.
\end{theorem}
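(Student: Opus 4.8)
The plan is to prove the two implications separately, in both cases exploiting the fact (Lemma~\ref{lemma_gen_tor}) that every pretorsion class $\Cal T$ is recovered from its class of indecomposable objects $\iota(\Cal T):=\{[X]\,:\,X\text{ indecomposable},\ X\in\Cal T\}$ via $\Cal T=\add(\iota(\Cal T))$. This gives an order embedding $\iota$ of $\Cal L_t(A)$ into the power set lattice $(\Cal P(\mathrm{Ind}),\cup,\cap)$ on the set $\mathrm{Ind}$ of isomorphism classes of indecomposables, and the whole argument turns on recognising \emph{when} $\iota$ is a lattice embedding, i.e.\ when it converts joins into unions. The hypothesis $\add(\Cal T_1\cup\Cal T_2)=\gen(\Cal T_1\cup\Cal T_2)$ is precisely this condition.

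For the backward implication, assume $\add(\Cal T_1\cup\Cal T_2)=\gen(\Cal T_1\cup\Cal T_2)$ for every pair. I would check that $\iota$ preserves both operations: it always preserves meets, since an indecomposable lies in $\Cal T_1\cap\Cal T_2$ if and only if it lies in each $\Cal T_i$; and the hypothesis gives $\iota(\Cal T_1\vee\Cal T_2)=\iota(\add(\Cal T_1\cup\Cal T_2))=\iota(\Cal T_1)\cup\iota(\Cal T_2)$, using that the indecomposables of $\add(\Cal T_1\cup\Cal T_2)$ are exactly those of $\Cal T_1$ together with those of $\Cal T_2$ (both $\Cal T_i$ being closed under summands). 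Since $\iota$ is injective and reflects the order, it is an injective lattice homomorphism onto a sublattice of the (distributive) Boolean lattice $\Cal P(\mathrm{Ind})$. Any sublattice of a distributive lattice is distributive, and $\iota$ transports the operations faithfully, so $\Cal L_t(A)$ is distributive.

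For the forward implication, assume $\Cal L_t(A)$ is distributive. Since $\add(\Cal T_1\cup\Cal T_2)\subseteq\gen(\Cal T_1\cup\Cal T_2)$ always holds (Remark~\ref{rem_jointly-epi}), it suffices to show the reverse inclusion; by Krull--Schmidt it is enough to place every indecomposable summand $M$ of a member of $\gen(\Cal T_1\cup\Cal T_2)$ into $\Cal T_1\cup\Cal T_2$. By Theorem~\ref{thm_joinirr}, $\gen(M)$ is completely join-irreducible, hence join-irreducible, and the standard fact that in a distributive lattice join-irreducibles are join-prime applies: from $\gen(M)\subseteq\Cal T_1\vee\Cal T_2$ and $\gen(M)=\gen(M)\wedge(\Cal T_1\vee\Cal T_2)=(\gen(M)\wedge\Cal T_1)\vee(\gen(M)\wedge\Cal T_2)$, join-irreducibility forces $\gen(M)\subseteq\Cal T_i$ for some $i$, that is $M\in\Cal T_i$. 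Hence $M\in\add(\Cal T_1\cup\Cal T_2)$, and running this over all such summands gives the desired equality.

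I expect the only real subtlety, rather than a genuine obstacle, to be the bookkeeping around $\iota$: one must confirm that $\iota$ reflects the order (so that it is injective with image a bona fide sublattice) and that passing to indecomposable summands is harmless, both of which rest on Krull--Schmidt and on closure under quotients. The conceptual heart is simply the reformulation of the hypothesis as ``join $=$ union of indecomposables'' for the converse, and the complete description of join-irreducibles from Theorem~\ref{thm_joinirr} for the direct implication; the latter makes the forward direction essentially immediate, and note that only the finitary (binary) distributive law is used, so no infinitary distributivity is required.
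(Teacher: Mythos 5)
Your proof is correct. The backward implication is essentially the paper's argument in different clothing: the paper verifies the identity $\Cal T_1 \cap \add(\Cal T_2\cup \Cal T_3)=\add((\Cal T_1\cap \Cal T_2)\cup (\Cal T_1 \cap \Cal T_3))$ directly by Krull--Schmidt decomposition of a module into indecomposable summands, which is exactly your embedding $\iota$ into the power set of indecomposables unwound; the bookkeeping you flag (injectivity of $\iota$, harmlessness of passing to summands) is indeed covered by Lemma~\ref{lemma_gen_tor} together with closure of pretorsion classes under quotients. The forward implication is where you genuinely diverge in organisation: the paper argues by contraposition, taking an indecomposable $M \in \gen(\Cal T_1\cup\Cal T_2)\setminus \add(\Cal T_1\cup\Cal T_2)$, setting $\Cal T:=\gen(M)$, and exhibiting the concrete failure $\Cal T \cap (\Cal T_1 \vee \Cal T_2) \neq \gen((\Cal T\cap\Cal T_1)\cup(\Cal T\cap\Cal T_2))$ via Remark~\ref{rem_jointly-epi} and a direct appeal to Lemma~\ref{lemma_indec_ess_gen}, whereas you run the implication forwards, citing Theorem~\ref{thm_joinirr} for join-irreducibility of $\gen(M)$ and then the standard join-irreducible-implies-join-prime argument in a distributive lattice. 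There is no circularity: Theorem~\ref{thm_joinirr} is established beforehand and independently of distributivity. In fact the distributive instance you compute, $\gen(M)=(\gen(M)\wedge\Cal T_1)\vee(\gen(M)\wedge\Cal T_2)$, is precisely the instance the paper shows to fail, so the two proofs are contrapositive formulations of the same fact; the real difference is that you outsource the Harada--Sai engine (Lemma~\ref{lemma_indec_ess_gen}) to the already-proved Theorem~\ref{thm_joinirr} rather than reproving the needed fragment inline. What your route buys is brevity and a cleaner conceptual reading of where each ingredient enters; what the paper's route buys is an explicit recipe for producing witnesses to non-distributivity from any pair with $\add(\Cal T_1\cup\Cal T_2)\subsetneq\gen(\Cal T_1\cup\Cal T_2)$, a template it then reuses throughout the examples of Section~\ref{section_classification_distr}. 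Your closing observation that only the binary distributive law is used is accurate and matches the paper.
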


\begin{proof}
    $(\Leftarrow)$ Let $\Cal T_1$, $\Cal T_2$ and $\Cal T_3$ be pretorsion classes of $\mod A$. In order to show that $\Cal L_t(A)$ is distributive, it is enough to show that
    $$\Cal T_1 \cap \add(\Cal T_2\cup \Cal T_3)=\add((\Cal T_1\cap \Cal T_2)\cup (\Cal T_1 \cap \Cal T_3)).$$
    Let $X\in \Cal T_1 \cap \add(\Cal T_2\cup \Cal T_3)$. As $\mod A$ is Krull-Schmidt, we can write 
    $$X\cong T_{2,1}\oplus \cdots \oplus T_{2,n}\oplus T_{3,1}\oplus \cdots \oplus T_{3,m},$$
    where $T_{2,i}\in \Cal T_1\cap \Cal T_2$ and $T_{3,i}\in \Cal T_1\cap \Cal T_3$. Hence $X\in \add((\Cal T_1\cap \Cal T_2)\cup (\Cal T_1 \cap \Cal T_3))$. The opposite inclusion follows by a similar argument and hence $\Cal L_t(A)$ is distributive.
    
    $(\Rightarrow)$ Assume that there exists a pair of pretorsion classes $\Cal T_1$ and $\Cal T_2$ in $\Cal L_t (A)$ such that $\add(\Cal T_1 \cup \Cal T_2) \subsetneq \gen( \Cal T_1 \cup \Cal T_2 )$. Then, there exists an indecomposable module $M \in \gen( \Cal T_1 \cup \Cal T_2 )\setminus \add(\Cal T_1 \cup \Cal T_2)$. Set $\Cal T:= \gen( M )$. We want to show that  
    $$\Cal T \cap (\Cal T_1 \vee \Cal T_2 )\neq \gen( (\Cal T \cap \Cal T_1) \cup (\Cal T \cap \Cal T_2) ).$$
    
    Clearly, $M \in \Cal T \cap (\Cal T_1 \vee \Cal T_2 )$, so it suffices to show that $M \notin \gen( (\Cal T \cap \Cal T_1) \cup (\Cal T \cap \Cal T_2) )$. First, observe that $M \notin (\Cal T \cap \Cal T_1) \cup (\Cal T \cap \Cal T_2)$ since $M \notin \add(\Cal T_1 \cup \Cal T_2)$. By Remark~\ref{rem_jointly-epi} it is enough to show that $M$ is not a quotient of a module $T_1 \oplus T_2$ for some $T_1 \in \Cal T \cap \Cal T_1$ and $T_2 \in \Cal T \cap \Cal T_2$. Assume the contrary, so that $\Cal T =\gen(T_1\oplus T_2)$. Note that $T_1\oplus T_2 $ cannot have $M$ as a direct summand as $M \notin \Cal T_1 \cup \Cal T_2$ and $\mod A$ is Krull-Schmidt, a contradiction by Lemma~\ref{lemma_indec_ess_gen}.
    \end{proof}

Recall that, for an $A$-module $M$, the \emph{radical of $M$}, denoted by $\rad M$, is the intersection of all the maximal submodules of $M$.
Recall also that by Nakayama's lemma (see for example \cite[Lemma~I.2.2]{ASS}), any non-zero finitely generated module $M$ over a finite-dimensional algebra has $\rad M\subsetneq M$, that is $M$ has at least one maximal submodule.

\begin{Prop}\label{prop_distributive_maxsubmodule}
    If every indecomposable module $M\in\mod{A}$ has exactly one maximal submodule, then the lattice $\Cal L_t(A)$ is distributive.
\end{Prop}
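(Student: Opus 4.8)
The plan is to apply Theorem~\ref{thm_distributive-characterisation}, which reduces distributivity of $\Cal L_t(A)$ to the single condition $\add(\Cal T_1 \cup \Cal T_2) = \gen(\Cal T_1 \cup \Cal T_2)$ for all pretorsion classes $\Cal T_1, \Cal T_2$. By Remark~\ref{rem_jointly-epi}, this equality is equivalent to $\add(\Cal T_1 \cup \Cal T_2)$ being closed under quotients, so the entire proof comes down to the following: if $f\colon M \to Q$ is an epimorphism with $M \in \add(\Cal T_1 \cup \Cal T_2)$, then $Q \in \add(\Cal T_1 \cup \Cal T_2)$. Since $\mod A$ is Krull--Schmidt and every indecomposable summand of $Q$ is again a quotient of $M$, I may assume $Q$ is indecomposable; and grouping the indecomposable summands of $M$ according to whether they lie in $\Cal T_1$ or in $\Cal T_2$ (and using that pretorsion classes are closed under finite direct sums), I may write $M = T_1 \oplus T_2$ with $T_i \in \Cal T_i$. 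It then suffices to prove that $Q$ lies in $\Cal T_1$ or in $\Cal T_2$.

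The first observation is that the hypothesis has a convenient reformulation: an indecomposable module $Q$ has exactly one maximal submodule if and only if $\Top Q = Q/\rad Q$ is simple. Indeed, every maximal submodule of $Q$ contains $\rad Q$, so the maximal submodules of $Q$ correspond to those of the semisimple module $Q/\rad Q$; the latter has a unique maximal submodule exactly when it is simple, and in that case $\rad Q$ is the unique maximal submodule of $Q$.

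Next I would denote by $f_i\colon T_i\to Q$ the restriction of $f$ to the $i$-th summand and compose with the canonical projection $\pi \colon Q \to Q/\rad Q$. As $\pi f$ is surjective and $Q/\rad Q$ is simple, at least one $\pi f_i$ is nonzero; say $\pi f_1 \neq 0$, which means $\im f_1 \not\subseteq \rad Q$. This is the crucial step: since $\rad Q$ is the unique maximal submodule of $Q$ and every proper submodule of the nonzero finitely generated module $Q$ is contained in a maximal one by Nakayama's lemma, the condition $\im f_1 \not\subseteq \rad Q$ forces $\im f_1 = Q$. Thus $f_1 \colon T_1 \to Q$ is itself an epimorphism, and as $\Cal T_1$ is closed under quotients we conclude $Q \in \Cal T_1 \subseteq \add(\Cal T_1 \cup \Cal T_2)$, as required.

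The step carrying the weight is the passage from ``the image survives modulo $\rad Q$'' to ``the image is all of $Q$'', and this is precisely where uniqueness of the maximal submodule enters: if $Q$ had two distinct maximal submodules, a map could reach the simple top through one of them without being surjective, and then $\add(\Cal T_1 \cup \Cal T_2)$ could fail to be closed under quotients. Everything else is the formal reduction machinery already supplied by Theorem~\ref{thm_distributive-characterisation} and Remark~\ref{rem_jointly-epi}.
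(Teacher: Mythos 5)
Your proof is correct and takes essentially the same route as the paper: both reduce via Theorem~\ref{thm_distributive-characterisation} and Remark~\ref{rem_jointly-epi} to the observation that, since the unique maximal submodule of an indecomposable $Q$ is $\rad Q$, any non-surjective map into $Q$ has image contained in $\rad Q$, so an epimorphism $T_1\oplus T_2\to Q$ forces one component to be epic and hence $Q\in\Cal T_1$ or $Q\in\Cal T_2$. Your simple-top reformulation and the explicit Krull--Schmidt reductions are just the contrapositive of the paper's argument spelled out in more detail.
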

    \begin{proof}
    If every indecomposable module $M$ in $\mod{A}$ has a unique maximal submodule, then $\rad M$ is equal to such maximal submodule. If $f: N\to M$ is a morphism that is not an epimorphism, then $\im f\subseteq \rad M$.
    As a consequence, any morphism $N_1\oplus N_2\to M$, where each component is not an epimorphism has image in $\rad M \subsetneq M$ and it is not an epimorphism.
    Hence $\Cal L_t (A)$ is distributive by Remark~\ref{rem_jointly-epi} and Theorem~\ref{thm_distributive-characterisation}.
    \end{proof}

We point out that the converse of Proposition~\ref{prop_distributive_maxsubmodule} is also true, as we will show in Proposition~\ref{prop_iff_unique_max}.

A special case of the above are algebras $A$ such that each  indecomposable module $M$ in $\mod A$ is \emph{uniserial}, that is it has unique composition series, or equivalently its submodule lattice is a chain. Moreover, if we require $A$ to be basic and connected, then by \cite[Corollary~V.3.6]{ASS} every indecomposable module in $\mod A$ is uniserial if and only if $A$ is a \textit{Nakayama algebra}, that is $A\cong kQ/\Cal I$ for $Q$ either linearly oriented type $\mathbb{A}_n$ or an oriented $n$-cycle, for some positive integer $n$ and some admissible ideal $\Cal I$.

\begin{corollary}
    If $A$ is an algebra where each indecomposable module in $\mod A$ is uniserial, then $\Cal L_t(A)$ is distributive.
    In particular, if $A$ is a Nakayama algebra, then $\Cal L_t(A)$ is distributive.
\end{corollary}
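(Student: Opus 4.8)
The plan is to deduce the statement directly from Proposition~\ref{prop_distributive_maxsubmodule}, whose hypothesis is that every indecomposable module in $\mod A$ has a unique maximal submodule. Thus the whole task reduces to observing that uniseriality forces this uniqueness, after which distributivity is immediate.

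First I would recall that, by definition, an indecomposable $A$-module $M$ is uniserial precisely when its lattice of submodules is a chain. For a nonzero $M$, Nakayama's lemma guarantees $\rad M \subsetneq M$, so $M$ has at least one maximal submodule; and in a chain a maximal proper element, whenever it exists, is unique. Hence each uniserial indecomposable $M$ has exactly one maximal submodule, which is then necessarily $\rad M$. With the hypothesis of Proposition~\ref{prop_distributive_maxsubmodule} verified for every indecomposable of $\mod A$, distributivity of $\Cal L_t(A)$ follows at once.

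For the ``in particular'' assertion, I would invoke the standard structure theory of Nakayama algebras, namely that over such an algebra every indecomposable module is uniserial; this is exactly the content of the discussion preceding the corollary, via \cite[Corollary~V.3.6]{ASS}. To be fully rigorous about the hypotheses of that result, the reduction is painless: by Remark~\ref{rem_equivalence_modquiv} we may replace $A$ by a basic algebra with the same module category, and by Proposition~\ref{remark_directprod_distributive} distributivity can be checked on each connected factor separately, so the basic–connected hypothesis of \cite[Corollary~V.3.6]{ASS} becomes available and every indecomposable is uniserial. The first part of the corollary then applies.

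There is no genuine obstacle here; the argument is a short deduction from Proposition~\ref{prop_distributive_maxsubmodule}. The only point requiring a line of care is the passage from ``$A$ is a Nakayama algebra'' to ``every indecomposable module is uniserial'', which is handled either by citing the standard fact directly or by the basic–connected reduction just described.
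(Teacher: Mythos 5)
Your proposal is correct and follows exactly the paper's route: the first assertion is deduced from Proposition~\ref{prop_distributive_maxsubmodule} (a uniserial indecomposable has a chain of submodules, hence its maximal submodule, which exists by Nakayama's lemma, is unique), and the Nakayama case is handled by citing \cite[Corollary~V.3.6]{ASS}. Your extra care about the basic--connected reduction via Remark~\ref{rem_equivalence_modquiv} and Proposition~\ref{remark_directprod_distributive} merely makes explicit a detail the paper leaves implicit, so this is essentially the same proof.
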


\begin{proof}
    The first sentence is a direct consequence of Proposition~\ref{prop_distributive_maxsubmodule}. The second sentence follows from \cite[Corollary~V.3.6]{ASS}.
\end{proof}

Note that $\Cal L_t(A)$ can be distributive even if there are indecomposable modules in $\mod A$ that are not uniserial. For example, for $A=k(1\leftarrow 2\rightarrow 3)$, the indecomposable module 
$k\xleftarrow{1_k} k\xrightarrow{1_k} k$ is not uniserial, but it has a unique maximal submodule and $\Cal L_t (A)$ is distributive (see Theorem~\ref{thm_classification_distributive} and Figure~\ref{fig:A3_distributive2} in the appendix).

\subsection{A classification of distributivity of \texorpdfstring{$\Cal L_t(A)$}{LtA}}\label{section_classification_distr}
We now classify the bound quiver algebras $A=kQ/\Cal I$ for which $\mod A$ has a distributive lattice of pretorsion classes. Notice that working with bound quiver algebras is not restrictive, thanks to Remark~\ref{rem_equivalence_modquiv}.

The aim of this subsection is proving the following result.

\begin{theorem}\label{thm_classification_distributive}
    Let $A=kQ/\Cal I$ be a bound quiver algebra. Then $\Cal L_t(A)$ is distributive if and only if each vertex in $Q$ has at most one arrow ending at it and two going out of it, and when there are precisely one entering arrow $\alpha$ and two exiting arrows $\beta$ and $\gamma$, then $\alpha\beta$ or $\alpha\gamma$ is in $\Cal I$. In other words, $Q$ does not contain subquivers of the form

    \begin{itemize}\itemsep1em
    \item[(i)] a vertex $\bullet$ with two arrows ending at it:\\

    \[\xymatrix@R=1em@C=1.5em{
    \circ \ar[r] & \bullet &\circ \ar[l] &&  \circ\ar@<-.7ex>[r] \ar@<.7ex>[r] & \bullet   &&   \circ \ar[r] & \bullet\ar@(ul,ur) &&
    \bullet \ar@(ur,dr)\ar@(dl,ul)
    }
    \]

    \item[(ii)] a vertex $\bullet$ with three arrows going out of it:\\

    \[\xymatrix@R=1em@C=1.5em{
     \circ&&&&&&&\\
     \circ & \bullet\ar[l]\ar[lu]\ar[ld]  &&\circ& \bullet\ar@(ul,ur) \ar[r]\ar[l] &\circ &&\\
     \circ &&&&&&&
    }\]

    \item[(iii)] a vertex $\bullet$ with two arrows going out and one arrow going in without appropriate relations, that is in the following  configurations neither $\alpha\beta$ nor $\alpha\gamma$ are in $\Cal I$, respectively neither $\epsilon^2$ nor $\epsilon\mu$ are in $\Cal I$:

     \[\xymatrix@R=1em@C=1.5em{
       \circ &&&&&&&&&\\
     & \bullet \ar[lu]_\beta\ar[ld]^\gamma &  \circ\ar[l]_\alpha &&  \circ\ar@<.7ex>[r]^\alpha & \bullet \ar@<.7ex>[l]^\beta \ar[r]^\gamma &\circ && \bullet\ar@(ul,ur)^\epsilon \ar[r]^\mu &\circ\\
      \circ &&&&&&&&&
     }\]
\end{itemize}
\end{theorem}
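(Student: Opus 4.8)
The plan is to first convert the lattice-theoretic statement into a condition on modules and then read that condition off the quiver. By Theorem~\ref{thm_distributive-characterisation} and Proposition~\ref{prop_iff_unique_max}, the lattice $\Cal L_t(A)$ is distributive if and only if every indecomposable module of $\mod A$ has a unique maximal submodule, that is, if and only if every indecomposable has simple top $M/\rad M$. By Proposition~\ref{remark_directprod_distributive} I may assume $Q$ connected. Since the subquivers in (i)--(iii) are exactly the failures of the three numerical and relational conditions, it suffices to prove that the conditions hold if and only if every indecomposable module has simple top.

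For the implication ``conditions $\Rightarrow$ simple tops'' I would show that the conditions force $A$ to be special biserial: the bounds $\le 1$ on in-degrees and $\le 2$ on out-degrees give the degree hypothesis, while the relation condition at a $1$-in-$2$-out vertex is exactly what guarantees that each arrow has at most one non-zero continuation. By the structure theorem for special biserial algebras, every indecomposable is then a string module, a band module, or a (necessarily local) projective-injective module. The decisive observation is that in-degree $\le 1$ forbids a string or band from having a deep vertex, since such a vertex would receive two distinct arrows of the walk. Consequently there are no bands and every string is unimodal, with a single peak; as the top of a string module is the sum of the simples at its peaks, every string module has simple top. Thus every indecomposable has simple top and $\Cal L_t(A)$ is distributive by Proposition~\ref{prop_distributive_maxsubmodule}. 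The same description of strings bounds their lengths via the relations, recovering the representation-finiteness noted in Remark~\ref{remark_distributive_finite}.

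For the converse I would argue contrapositively, producing in each forbidden configuration an indecomposable with two-dimensional top, so that $\Cal L_t(A)$ fails to be distributive by Proposition~\ref{prop_iff_unique_max}. When a vertex has in-degree $\ge 2$, two arrows of a walk meeting at it create two peaks: for two distinct incoming arrows the module $k\to k\leftarrow k$ works, for a double arrow the indecomposable of dimension vector $(2,1)$ does, and for the loop cases a short string passing through the loop does. When a vertex has out-degree $\ge 3$ and no loop, the $D_4$-type indecomposable with a two-dimensional space at the vertex mapping by three general functionals to its neighbours has top $S^2$. For a $1$-in-$2$-out vertex with incoming $\alpha$ and outgoing $\beta,\gamma$ satisfying $\alpha\beta,\alpha\gamma\notin\Cal I$, I would take the $D_4$-type module of dimension vector $(1;2;1,1)$ in which $\im M_\alpha$, $\ker M_\beta$ and $\ker M_\gamma$ are three distinct lines; this is a legitimate $A$-module precisely because the two relations are absent, and its top is $S_a\oplus S_v$.

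The step I expect to be the main obstacle is the loop versions of configurations (ii) and (iii), in which the loop is the unique arrow entering a vertex of out-degree $\ge 2$. There the naive two-dimensional module fails, because a nilpotent operator on a two-dimensional space satisfies $\epsilon^2=0$, forcing $\im M_\epsilon=\ker M_\epsilon$ and collapsing the construction. To get around this I would enlarge the space and glue a Jordan block of size three to a block of size one, for instance setting $\ker f=\langle a_2+c\rangle$ with $a_2$ the middle vector of the size-three block and $c$ spanning the size-one block; this uses $\epsilon^2\notin\Cal I$ to obtain the size-three block and $\epsilon\mu\notin\Cal I$ to ensure the gluing vector survives. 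Verifying that no $N$-invariant decomposition is compatible with $\ker f$, so that the module really is indecomposable with top $S_v^{\,2}$, together with checking the analogous higher-dimensional gluings in the remaining loop sub-cases, is where the genuine work of this direction lies.
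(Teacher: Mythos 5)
Your forward direction (conditions $\Rightarrow$ distributive) is essentially the paper's argument: reduce to string combinatorics, observe that in-degree $\le 1$ together with the relation condition forces every indecomposable to be a string module whose string has a single peak, hence simple top, and conclude via Proposition~\ref{prop_distributive_maxsubmodule}. That half is fine. The converse direction, however, has a genuine gap: it is circular. You invoke Proposition~\ref{prop_iff_unique_max} to pass from ``some indecomposable has a two-dimensional top'' to ``$\Cal L_t(A)$ is not distributive'', but in the paper the hard implication of that proposition (distributive $\Rightarrow$ unique maximal submodules) is itself \emph{deduced from} Theorem~\ref{thm_classification_distributive} --- the paper says explicitly that the proof of the classification theorem is what proves the converse of Proposition~\ref{prop_distributive_maxsubmodule}. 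The only implication available to you a priori is the one you already used in the forward direction, so your contrapositive argument assumes what is being proved.

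Nor can the circularity be patched by merely exhibiting the two-dimensional tops, because no independently established result converts a non-simple top into a failure of distributivity. What the paper actually does in each forbidden configuration (Examples~\ref{eg_nondistributive1}--\ref{eg_nondistributive8}) is produce modules $T_1,T_2,M$ with $M$ a quotient of $T_1\oplus T_2$ and then \emph{verify the non-memberships} $M\notin\gen(T_1)$ and $M\notin\gen(T_2)$, concluding via Remark~\ref{rem_jointly-epi} and Theorem~\ref{thm_distributive-characterisation}; those verifications are the real content, and your proposal omits them. The natural repair --- take $U\neq V$ maximal submodules of your module $M$, so $M=U+V$ is a quotient of $U\oplus V$ --- does not come for free: if $U\cong V$ then $M$ is a quotient of $U^2\in\gen(U)$ and the construction collapses, and even for $U\not\cong V$ one must compute $\Hom$-spaces to rule out $M\in\gen(U)$. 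This is exactly the danger in the isotypic-top configurations (double arrow, two loops, loop at the $1$-in-$2$-out vertex), which you correctly identify as the hard cases: for the Kronecker module $N_1$ every maximal submodule is some $T_{1,\lambda}$, and the paper's Example~\ref{eg_kronecker} must both choose two non-isomorphic ones and check by an explicit $\Hom$ computation that $N_1\notin\gen(T_{1,\lambda})$ for each single $\lambda$; the loop cases (Examples~\ref{eg_nondistributive5} and~\ref{eg_nondistributive8}) require the still larger gluing modules you sketch, plus the same non-generation checks, not just indecomposability and a top of dimension two. Until those computations are done --- and done without appeal to Proposition~\ref{prop_iff_unique_max} --- the converse direction of your proof is not established.
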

Before proving the theorem, note that algebras appearing in the statement are a subclass of string algebras. We recall the definition below, see~\cite{BR} for more details.

\begin{definition}
    Let $Q$ be a quiver. An algebra $A=kQ/\mathcal{I}$ is called a \emph{string algebra} if
    \begin{itemize}
        \item[(a)] any vertex $v$ of $Q$ has at most two arrows coming in $v$ and at most two arrows going out of $v$,
        \item[(b)] given any arrow $\alpha\in Q$, there is at most one arrow $\beta$ with $\beta\alpha\notin \mathcal{I}$ and most one arrow $\gamma$ with $\alpha\gamma\notin \mathcal{I}$, 
        \item[(c)] all the relations are monomials of length at least two.
    \end{itemize}
\end{definition}

Quivers associated to the string algebras described in the statement of Theorem~\ref{thm_classification_distributive} have the following local configurations at a vertex $\bullet$:

\[\xymatrix@R=1em@C=1.5em{
   \bullet &&\circ\ar[r]& \bullet  && \bullet \ar[r] &\circ& \circ\ar[r]& \bullet \ar[r] &\circ& \circ& \bullet \ar[l]\ar[r] &\circ 
\\
       \circ&&&&&&&&&\\
     & \bullet \ar[lu]_{\beta}\ar[ld]^{\gamma} &  \circ\ar[l]_{\alpha} &&\circ  \ar@<.7ex>[r]^\alpha & \bullet \ar@<.7ex>[l]^\beta\ar[r]^\gamma& \circ&& \bullet\ar@(ul,ur)^\epsilon \ar[r]^\mu &\circ\\
      \circ&&&&&&&&&
     }
     \]
     
     with relations $\alpha\beta$ or/and $\alpha\gamma\in\Cal I$;  $\epsilon^2$ or/and $\epsilon\mu\in\Cal I$ respectively. Note that it is allowed to have relations in the 4th configuration.

Representation theory of string algebras is well-understood~\cite{BR, CB}. For $A$ a string algebra, the indecomposable modules in $\mod A$ are divided into string and band modules, defined as follows. 

\begin{definition}
Let $A=kQ/\Cal I$ be a string algebra.
We write $Q=(Q_0,Q_1,s,t)$, where $Q_0$ is the set of vertices, $Q_1$ is the set of arrows, and $s,\ t \colon Q_1 \to Q_0$ are the source and target functions. For each $\alpha\in Q_1$, we define formally $\alpha^{-1}$ with $s(\alpha^{-1})=t(\alpha)$ and $t(\alpha^{-1})=s(\alpha)$. Let us denote by $Q_1^{-1}$ the set of these formal inverses of arrows. A \emph{string} (or a \emph{walk}) is a sequence $w=w_1w_2\cdots w_r$ for $w_i\in Q_1\cup Q_1^{-1}$ such that 
\begin{itemize}
    \item[(i)] $t(w_i)=s(w_{i+1})$,
    \item[(ii)] no subsequence of $w$ has the form $\alpha\alpha^{-1}$ or $\alpha^{-1}\alpha$,
    \item[(iii)] no subsequence $v=w_i w_{i+1}\dots w_{i+j}$ of $w$ is such that the corresponding path, nor the path corresponding to its formal inverse $v^{-1}:=w^{-1}_{i+j}\dots w^{-1}_{i+1}\dots w^{-1}_{i}$, is in $\Cal I$.
\end{itemize}

Trivial strings are denoted by $e_i$ for $i\in Q_0$. If $w =w_1w_2\cdots w_r$ with $w_i\in Q_1$ for every $1\leq i\leq n$, $w$ is called a \emph{direct string} and, dually, if $w^{-1}$ is a direct string, $w$ is called an \emph{inverse string}, see Figure~\ref{fig:direct-inverse}.

The \emph{bands} are strings $w$ starting and ending at the same vertex and such that $w^m$ is a string for any $m\ge 1$ and $w$ is not a power of a string of strictly smaller length. 

Each string $w$ defines an indecomposable \emph{string module} $M(w)$. As a representation, this is given by assigning a copy of the field $k$ to each of the vertices of the graph $w$ and a copy of the identity
map to each of the arrows in the graph. Note that $M(w)\cong M(v)$ if and only if $w=v^{-1}$ or $v=w^{-1}$. 

Moreover, each band $b$ (up to cyclic permutation and inversion) defines an infinite family of indecomposable \emph{band modules}.
\end{definition}

We can now see that string algebras  with underlying quiver satisfying the restrictions described in Theorem~\ref{thm_classification_distributive} are algebras of finite-representation type.
\begin{remark}\label{remark_distributive_finite}
Let $A=kQ/\Cal I$ be a connected string algebra as in the statement of Theorem~\ref{thm_classification_distributive}.
By construction, there is at most one cycle $C$ in $Q$ (otherwise there would be a vertex with two arrows ending at it), and $C^\ell\in \Cal I$ for some $\ell\ge 1$, as we assume $A$ to be finite-dimensional. Moreover, as we do not allow vertices with multiple arrows ending at them, there are no non-oriented cycles in $Q$. 

It is then straightforward to check that there are no bands and hence no band modules in $\mod A$. In other words, all indecomposable modules in $\mod A$ are string modules. We conclude that $A$ is of finite-representation type, see \cite[Lemma~II.8.1]{Erd}.
\end{remark}

In order to prove Theorem~\ref{thm_classification_distributive}, as a further preliminary step we show that in the presence of a subquiver of the form (i), (ii) or (iii), the lattice $\Cal{L}_t(A)$ is not distributive. In particular, the following examples show the key behaviours that make the lattice not distributive. In most of the following examples, we use the notation from~\cite[Remark 1.1]{S} for indecomposable modules.

\begin{example}\label{eg_nondistributive1}
    Let us consider the quiver $\xymatrix{\mathbb{A}_3: 1 \ar[r] & 2  & 3\ar[l]}$, the algebra $A=k\mathbb{A}_3$, and the indecomposable modules
    \[
    \xymatrix{
    {\begin{smallmatrix} 1\,\,3\\ 2 \end{smallmatrix}}: k\ar[r]^-1& k&k\ar[l]_1, & 
    {\begin{smallmatrix} 1\\ 2 \end{smallmatrix}}: k\ar[r]^-1& k&0,\ar[l] &
    {\begin{smallmatrix} 3\\ 2 \end{smallmatrix}}: 0\ar[r]& k&k.\ar[l]_1
    }
    \]
    We have that ${\begin{smallmatrix} 1\,\,3\\ 2 \end{smallmatrix}}$ is a quotient of ${\begin{smallmatrix} 1\\ 2 \end{smallmatrix}}\oplus {\begin{smallmatrix} 3\\ 2 \end{smallmatrix}}$, so ${\begin{smallmatrix} 1\,\,3\\ 2 \end{smallmatrix}}\in \gen({\begin{smallmatrix} 1\\ 2 \end{smallmatrix}})\vee \gen({\begin{smallmatrix} 3\\ 2 \end{smallmatrix}})$. Looking at dimension vectors of the representations, it is easy to see that ${\begin{smallmatrix} 1\,\,3\\ 2 \end{smallmatrix}}\not\in \gen ({\begin{smallmatrix} 1\\ 2 \end{smallmatrix}})$ and 
    ${\begin{smallmatrix} 1\,\,3\\ 2 \end{smallmatrix}}\not\in \gen ({\begin{smallmatrix} 3\\ 2 \end{smallmatrix}})$. Hence, by Remark~\ref{rem_jointly-epi} and Theorem~\ref{thm_distributive-characterisation}, we conclude that $\Cal L_t(A)$ is not distributive.
\end{example}

\begin{example}\label{eg_kronecker}
    Let $Q:\xymatrix{1 \ar@<-.7ex>[r] \ar@<.7ex>[r] & 2}$ be the Kronecker quiver and $A=kQ$. This is a very well studied example of $\mod kQ$, of infinite-representation type. The indecomposable modules and the morphisms between them are fully described, see for example~\cite[Chapter XI.4]{SS2}. In particular, for any $\lambda\in k$ there is an indecomposable module $\xymatrix{T_{1,\lambda}:  k \ar@<-.7ex>[r]_-{\lambda} \ar@<.7ex>[r]^-{1} & k}$, and different choices of $\lambda$ give non-isomorphic modules.

    For $\lambda\in k$ and $n\ge 1$, it is easy to check that the only morphisms of the form $(T_{1,\lambda})^n\to N_1$, with $N_1$ the indecomposable module as below, have the form
    \[\xymatrix{
(T_{1,\lambda})^n:\ar[d] && k^n \ar@<-.7ex>[r]_{\lambda\cdot I_n} \ar@<.7ex>[r]^{I_n} \ar[d]_{{\begin{psmallmatrix}
    a_1&\dots &a_n\\\lambda a_1&\dots &\lambda a_n
\end{psmallmatrix}}}& k^n\ar[d]^{{\begin{psmallmatrix}
    a_1&\dots &a_n
\end{psmallmatrix}}}\\
N_1 : && k^2 \ar@<-.7ex>[r]_{{\begin{psmallmatrix}
        0&1
    \end{psmallmatrix}}} \ar@<.7ex>[r]^{{\begin{psmallmatrix}
        1&0
    \end{psmallmatrix}}} & k,
}\]
where $a_1,\dots,a_n\in k$. These are clearly not surjective and so $N_1\notin \gen(T_{1,\lambda})$, for any choice of $\lambda\in k$.

On the other hand, we have the epimorphism

\[\xymatrix@R=3em{
T_{1,0}\oplus T_{1,1}: \ar[d] & k^2 \ar@<-.7ex>[r]_{{\begin{psmallmatrix}
    0 & 0 \\ 0 & 1
\end{psmallmatrix}}} \ar@<.7ex>[r]^{
{\begin{psmallmatrix}
    1 &0\\ 0&1
\end{psmallmatrix}}} \ar[d]_{{\begin{psmallmatrix}
    1 & 1\\ 0& 1
\end{psmallmatrix}}}& k^2\ar[d]^{{\begin{psmallmatrix}  1 & 1\end{psmallmatrix}}}
\\
N_1 : & k^2 \ar@<-.7ex>[r]_{{\begin{psmallmatrix}
        0&1
    \end{psmallmatrix}}} \ar@<.7ex>[r]^{{\begin{psmallmatrix}
        1&0
    \end{psmallmatrix}}} & k.
}\]

Hence $N_1\in \gen( T_{1,0} ) \vee \gen( T_{1,1} ) \neq \add(\gen( T_{1,0} )\cup \gen( T_{1,1} ))$ and the lattice $\Cal L_t(A)$ is not distributive by Remark~\ref{rem_jointly-epi} and Theorem~\ref{thm_distributive-characterisation}.
\end{example}

\begin{example}\label{eg_nondistributive4}
Consider the quiver 

\[\xymatrix{Q: &1 \ar[r]^\alpha & 2\ar@(ur,dr)^\epsilon}\]
and the algebra $A=kQ/\Cal I$ for some admissible ideal $\Cal I$, that is $\Cal I$ contains a relation of the form $\epsilon^r$ for some integer $r\geq 2$. Consider the indecomposable modules
\[
\xymatrix{
{\begin{smallmatrix}
    2\,\,1\\2
\end{smallmatrix}}: k \ar[r]^-{\begin{psmallmatrix}
    0\\1
\end{psmallmatrix}} & k^2\ar@(ur,dr)^{\begin{psmallmatrix}
    0&0\\1&0
\end{psmallmatrix}}
&& {\begin{smallmatrix}
    1\\2
\end{smallmatrix}}: k \ar[r]^-1 & k\ar@(ur,dr)^0
&&
{\begin{smallmatrix}
    2\\2
\end{smallmatrix}}: 0 \ar[r] & k^2\ar@(ur,dr)^{\begin{psmallmatrix}
    0&0\\1&0
\end{psmallmatrix}}
}
\]

Note that the above are indecomposable modules for any choice of $\Cal I$.
It is easy to see that $\begin{smallmatrix}
    2\,\,1\\2
\end{smallmatrix}$ is a quotient of $\begin{smallmatrix}
    1\\2
\end{smallmatrix}\oplus \begin{smallmatrix}
    2\\2
\end{smallmatrix}$ but $\begin{smallmatrix}
    2\,\,1\\2
\end{smallmatrix}\not\in\gen(\begin{smallmatrix}
    1\\2
\end{smallmatrix})$,
$\begin{smallmatrix}
    2\,\,1\\2
\end{smallmatrix}\not\in\gen(\begin{smallmatrix}
    2\\2
\end{smallmatrix})$. Hence the lattice $\Cal L_t(A)$ is not distributive by Remark~\ref{rem_jointly-epi} and Theorem~\ref{thm_distributive-characterisation}.
\end{example}

\begin{example}\label{eg_nondistributive5}
Consider the quiver
\[
\xymatrix{Q:&1 \ar@(ur,dr)\ar@(dl,ul)}
\]
    and the algebra $A=kQ/\Cal{I}$ for some admissible ideal $\Cal I$. Consider the indecomposable modules
    \[
    \xymatrix{
    M:&& k^2 \ar@(ur,dr)^0\ar@(dl,ul)^{{\begin{psmallmatrix}
        0&0\\1&0
    \end{psmallmatrix}}}
    &&
    N:& k^2 \ar@(ur,dr)^{{\begin{psmallmatrix}
        0&0\\1&0
    \end{psmallmatrix}}}\ar@(dl,ul)^0
    &&
    L:&& k^3 \ar@(ur,dr)^{{\begin{psmallmatrix}
        0&1&0\\0&0&0\\0&0&0
\end{psmallmatrix}}}\ar@(dl,ul)^{{\begin{psmallmatrix}
  0&0&1\\0&0&0\\0&0&0      
    \end{psmallmatrix}}}
    }
    \]
Note that the above are indecomposable modules for any choice of $\Cal I$.
It is routine to build an epimorphism from $M\oplus N$ to $L$, so $L\in \gen( M)\vee\gen( N)$. Moreover, it is easy to see that $L\notin \Cal \gen( M)$ and $L\notin \gen( N)$ and hence $L\notin \add(\gen( M)\cup \gen( N))\subsetneq \Cal \gen( M)\vee \gen( N)$.
Then, by Remark~\ref{rem_jointly-epi} and Theorem~\ref{thm_distributive-characterisation}, we conclude that $\Cal L_t(A)$ is not distributive.
\end{example}

\begin{example}\label{eg_nondistributive2}
    Let us consider the quiver 
    $$
    \xymatrix@R=1em{
     &1 
    \\
     \mathbb{D}_4: &2 & 4,\ar[l]\ar[lu]\ar[ld]
    \\
    &3
    }
    $$
    the algebra $A=k\mathbb{D}_4$,
    and the indecomposable modules

    \[
    \xymatrix@R=2em @C=2.5em{
     &k &&&0&&&k
    \\
     {\begin{smallmatrix}
         4\,\,4\\ 1\,2\,3
     \end{smallmatrix}}: &k & k^2,\ar[l]_{{\begin{psmallmatrix}
         1&1
     \end{psmallmatrix}}}\ar[lu]_{{\begin{psmallmatrix}
         1&0
     \end{psmallmatrix}}}\ar[ld]^{{\begin{psmallmatrix}
         0&1
     \end{psmallmatrix}}}
     &
     {\begin{smallmatrix}
        4\\2\,\,3
    \end{smallmatrix}}: & k & k,\ar[l]_{1}\ar[lu]\ar[ld]^{1}
    &
    {\begin{smallmatrix}
        4\\1\,\,3
    \end{smallmatrix}}: & 0 & k.\ar[l]\ar[lu]_1\ar[ld]^{1}
    \\
    &k&&&k&&&k
    }
    \]
    It is routine to build an epimorphism from  ${\begin{smallmatrix}
        4\\2\,\,3
    \end{smallmatrix}}\oplus {\begin{smallmatrix}
        4\\1\,\,3
    \end{smallmatrix}}$ to ${\begin{smallmatrix}
         4\,\,4\\ 1\,2\,3
     \end{smallmatrix}}$, so $\begin{smallmatrix}
         4\,\,4\\ 1\,2\,3
     \end{smallmatrix}\in \gen (\begin{smallmatrix}
        4\\2\,\,3
    \end{smallmatrix})\vee \gen(\begin{smallmatrix}
        4\\1\,\,3
    \end{smallmatrix})$. Looking at dimension vectors of the representations, it is easy to see that
    $\begin{smallmatrix}
         4\,\,4\\ 1\,2\,3
     \end{smallmatrix}\not\in \gen (\begin{smallmatrix}
        4\\2\,\,3
    \end{smallmatrix})$ and $\begin{smallmatrix}
         4\,\,4\\ 1\,2\,3
     \end{smallmatrix}\not\in \gen (\begin{smallmatrix}
        4\\1\,\,3
    \end{smallmatrix})$. Hence, by Remark~\ref{rem_jointly-epi} and Theorem~\ref{thm_distributive-characterisation}, we conclude that $\Cal L_t(A)$ is not distributive.
\end{example}

The following quiver gives a non-distributive lattice for similar reasons to Example~\ref{eg_nondistributive2} (a vertex with three arrows out of it).
\begin{example}\label{eg_nondistributive6}
    Consider the quiver 
    $Q:\xymatrix{2 & 1\ar@(ul,ur) \ar[r]\ar[l] & 3 }$
    and the algebra $A=kQ/\Cal I$ for some admissible ideal $\Cal I$. Consider the indecomposable modules
    \[
    \xymatrix@C=2.5em{
    {\begin{smallmatrix}
        1\\2\,\,3
    \end{smallmatrix}}: k & k\ar@(ul,ur)^0 \ar[r]^-1\ar[l]_-1 & k,
    \,\,\,\,\,\,\,
    {\begin{smallmatrix}
        1\\1\,\,2
    \end{smallmatrix}}: k & k^2\ar@(ul,ur)^{{\begin{psmallmatrix}
        0&0\\1&0
    \end{psmallmatrix}}} \ar[r]\ar[l]_-{{\begin{psmallmatrix}
        1&0
    \end{psmallmatrix}}} & 0,
    \,\,\,\,\,\,\,
    {\begin{smallmatrix}
        1\,\,1\\1\,2\,3
    \end{smallmatrix}}: k & k^3\ar@(ul,ur)^{{\begin{psmallmatrix}
        0&0&0\\1&0&0\\0&0&0
    \end{psmallmatrix}}} \ar[r]^{{\begin{psmallmatrix}
        0&0&1
    \end{psmallmatrix}}}\ar[l]_-{{\begin{psmallmatrix}
        1&0&1
    \end{psmallmatrix}}} & k.
    }
    \]
    Note that the above are indecomposable modules for any choice of $\Cal I$.
    It is routine to check that there is an epimorphism from  $\begin{smallmatrix}
        1\\2\,\,3
    \end{smallmatrix}\oplus \begin{smallmatrix}
        1\\1\,\,2
    \end{smallmatrix}$ to $\begin{smallmatrix}
        1\,\,1\\1\,2\,3
    \end{smallmatrix}$, so $\begin{smallmatrix}
        1\,\,1\\1\,2\,3
    \end{smallmatrix}\in \gen( \begin{smallmatrix}
        1\\2\,\,3
    \end{smallmatrix})\vee  \gen( \begin{smallmatrix}
        1\\1\,\,2
    \end{smallmatrix})$. Looking at dimension vectors of the representations, it is clear that $\begin{smallmatrix}
        1\,\,1\\1\,2\,3
    \end{smallmatrix}\notin \gen( \begin{smallmatrix}
        1\\1\,\,2
    \end{smallmatrix})$. Moreover, it is easy to check that $\begin{smallmatrix}
        1\,\,1\\1\,2\,3
    \end{smallmatrix}\notin \gen( \begin{smallmatrix}
        1\\2\,\,3
    \end{smallmatrix})$. Hence $\begin{smallmatrix}
        1\,\,1\\1\,2\,3
    \end{smallmatrix}\notin\add(\gen( \begin{smallmatrix}
        1\\1\,\,2
    \end{smallmatrix})\cup \gen( \begin{smallmatrix}
        1\\2\,\,3
    \end{smallmatrix}))$. By Remark~\ref{rem_jointly-epi} and Theorem~\ref{thm_distributive-characterisation}, we conclude that $\Cal L_t(A)$ is not distributive.
\end{example}

We now consider a different orientation of $\mathbb{D}_4$ than the one studied in a previous example.

\begin{example}\label{eg_nondistributive3}
    Let us consider the quiver
     $$
    \xymatrix@R=1em{
   &3
    \\
   \mathbb{D}_4:&& 2 \ar[lu]\ar[ld] & 1, \ar[l]
    \\
    &4
    }
    $$
    the algebra $A=k\mathbb{D}_4$ and the indecomposable modules
    \[
    \xymatrix@R=0.5em @C=1.6em{
    &k&&&&k&&&&k
    \\
    {\begin{smallmatrix}
        1\\2\,\,2\\3\,\,\,\,\,\,4
    \end{smallmatrix}}: && k^2\ar[lu]_{{\begin{psmallmatrix}
        1&0
    \end{psmallmatrix}}}\ar[ld]^{{\begin{psmallmatrix}
        0&1
    \end{psmallmatrix}}}&k,\ar[l]_-{{\begin{psmallmatrix}
    1\\1
    \end{psmallmatrix}}}&
    {\begin{smallmatrix}
        1\\2\\3\,\,4
    \end{smallmatrix}}:&& k\ar[lu]_1\ar[ld]^1&k,\ar[l]_1
    &
    {\begin{smallmatrix}
        2\\3
    \end{smallmatrix}}:
     && k\ar[lu]_1\ar[ld]&0.\ar[l]
    \\
    &k&&&&k&&&&0
    }
    \]
    We have that $\begin{smallmatrix}
        1\\2\,\,2\\3\,\,\,\,\,\,4
    \end{smallmatrix}$ is a quotient of ${\begin{smallmatrix}
        1\\2\\3\,\,4
    \end{smallmatrix}}\oplus {\begin{smallmatrix}
        2\\3
    \end{smallmatrix}}$, so $\begin{smallmatrix}
        1\\2\,\,2\\3\,\,\,\,\,\,4
    \end{smallmatrix}\in \gen \Big({\begin{smallmatrix}
        1\\2\\3\,\,4
    \end{smallmatrix}}\Big)\vee \gen ({\begin{smallmatrix}
        2\\3
    \end{smallmatrix}})$. Looking at dimension vectors of the representations, it is clear that ${\begin{smallmatrix}
        1\\2\,\,2\\3\,\,\,\,\,\,4
    \end{smallmatrix}}\not\in\gen({\begin{smallmatrix}
        2\\3
    \end{smallmatrix}})$. Moreover, it is easy to directly check that there are no epimorphisms of the form $\Big(\begin{smallmatrix}
        1\\2\\3\,\,4
    \end{smallmatrix}\Big)^r\to \begin{smallmatrix}
        1\\2\,\,2\\3\,\,\,\,\,\,4
    \end{smallmatrix}$ for $r\ge 1$ and so $\begin{smallmatrix}
        1\\2\,\,2\\3\,\,\,\,\,\,4
    \end{smallmatrix}\not\in\gen\Big(\begin{smallmatrix}
        1\\2\\3\,\,4
    \end{smallmatrix}\Big)$. Hence, by Remark~\ref{rem_jointly-epi} and Theorem~\ref{thm_distributive-characterisation}, we conclude that $\Cal L_t(A)$ is not distributive.
\end{example}

The next case gives a non-distributive lattice for similar reasons to Example~\ref{eg_nondistributive3}.
\begin{example}\label{eg_nondistributive7}
    Consider the quiver $Q:\xymatrix{1 \ar@<.7ex>[r]^\alpha & 2 \ar@<.7ex>[l]^\beta\ar[r]^\gamma&3}$ and the algebra $A=kQ/\Cal I$ for some admissible ideal $\Cal I$ such that neither $\alpha\beta$ nor $\alpha\gamma$ is in $\Cal I$. Consider the indecomposable modules
    \[
    \xymatrix{
     {\begin{smallmatrix}
        2\\3
    \end{smallmatrix}}:
    0\ar@<.7ex>[r] & k \ar@<.7ex>[l]\ar[r]^1&k, &
    {\begin{smallmatrix}
        1\\2\\1\,\,3
    \end{smallmatrix}}:
    k^2\ar@<.7ex>[r]^-{{\begin{psmallmatrix}
        1&0
    \end{psmallmatrix}}} & k \ar@<.7ex>[l]^-{{\begin{psmallmatrix}
        0\\1
    \end{psmallmatrix}}}\ar[r]^1&k, 
    &
    {\begin{smallmatrix}
        \,\,1\\
        \,2\,\,2\\
        1\,\,\,\,\,\,3
    \end{smallmatrix}}:
    k^2\ar@<.7ex>[r]^-{{\begin{psmallmatrix}
        1&0\\1&0
    \end{psmallmatrix}}} & k^2 \ar@<.7ex>[l]^-{{\begin{psmallmatrix}
        0&0\\1&0
\end{psmallmatrix}}}\ar[r]^{{\begin{psmallmatrix}
        0&1
    \end{psmallmatrix}}}&k.
    }
    \]
    Note that the above are indecomposable modules in $\mod A$ thanks to the restriction we put on $\Cal I$.
    We have that $\begin{smallmatrix}
        \,\,1\\
        \,2\,\,2\\
        1\,\,\,\,\,\,3
    \end{smallmatrix}$ is a quotient of $\begin{smallmatrix}
        1\\2\\1\,\,3
    \end{smallmatrix}\oplus \begin{smallmatrix}
        2\\3
    \end{smallmatrix}$ and so 
    $$\begin{smallmatrix}
        \,\,1\\
        \,2\,\,2\\
        1\,\,\,\,\,\,3
    \end{smallmatrix}\in \gen\Big(\begin{smallmatrix}
        1\\2\\1\,\,3
    \end{smallmatrix}\Big)\vee \gen(\begin{smallmatrix}
        2\\3
    \end{smallmatrix}).$$
    Looking at dimension vectors of the representations, it is clear that $\begin{smallmatrix}
        \,\,1\\
        \,2\,\,2\\
        1\,\,\,\,\,\,3
    \end{smallmatrix}$  is not in $\gen( \begin{smallmatrix}
        2\\3
    \end{smallmatrix})$. Moreover, it is easy to directly check that $\begin{smallmatrix}
        \,\,1\\
        \,2\,\,2\\
        1\,\,\,\,\,\,3
    \end{smallmatrix}\not\in\gen\Big(\begin{smallmatrix}
       1\\2\\1\,\,3
    \end{smallmatrix}\Big)$.
    By Remark~\ref{rem_jointly-epi} and Theorem~\ref{thm_distributive-characterisation}, we conclude that $\Cal L_t(A)$ is not distributive.
\end{example}

The final case requires more computations involving indecomposable representations with bigger dimension vectors, but it still gives a non-distributive lattice.
\begin{example}\label{eg_nondistributive8}
    Consider the quiver $Q:\xymatrix{1\ar@(dl,ul)^\epsilon \ar[r]^\mu & 2}$ and the algebra $A=kQ/\Cal I$ for some admissible ideal $\Cal I$ such that $\epsilon^2$ and $\epsilon\mu$ are both not in $\Cal I$. Consider the indecomposable modules
    \[
    \xymatrix@C=3em{
    L={\begin{smallmatrix}
        1\\1\,\,2\\2\,\,\,\,\,\,\,\,
    \end{smallmatrix}}:\,\,\,\,& k^2\ar@(dl,ul)^{{\begin{psmallmatrix}
      0&0\\1&0  
    \end{psmallmatrix}}}\ar[r]^-{{\begin{psmallmatrix}
        1&0\\0&1
    \end{psmallmatrix}}} & k^2,
    \quad
    M={\begin{smallmatrix}
         \,\,1\\
        \,1\,\,1\\
        1\,\,\,\,\,\,2
    \end{smallmatrix}}:\,\,\,\,\,\,\,\,\,\,\,\,\,\,\,\,
    &
    k^4\ar@(dl,ul)^-{{\begin{psmallmatrix}
        0&0&0&0\\1&0&0&0\\0&1&0&0\\1&0&0&0
    \end{psmallmatrix}}} \ar[r]^-{{\begin{psmallmatrix}
        0&0&0&1
    \end{psmallmatrix}}} & k,
    \\
    \qquad
    N={\begin{smallmatrix}
        1&&&& 1\\
        &1&&1&&2\\
        2&&1
    \end{smallmatrix}}:&&
    k^5\ar@(dl,ul)^-{{\begin{psmallmatrix}
        0&1&0&0&1\\0&0&1&0&0\\0&0&0&0&0\\0&0&0&0&0\\0&0&0&1&0
    \end{psmallmatrix}}} \ar[r]^-{{\begin{psmallmatrix}
        0&1&0&0&0\\
        0&0&0&1&0
    \end{psmallmatrix}}} & k^2.
    }
    \]
    Note that the above are indecomposable modules in $\mod A$ thanks to the restriction we put on $\Cal I$.
    It is easy to check that $N$ is a quotient of $L\oplus M$, and so $N\in \gen(L)\vee\gen(M)$. Moreover, by direct computations, it is easy to check that $N\notin\gen(L)$ and $N\notin\gen(M)$.
    By Remark~\ref{rem_jointly-epi} and Theorem~\ref{thm_distributive-characterisation}, we conclude that $\Cal L_t(A)$ is not distributive. 
\end{example}

We are now ready to prove Theorem~\ref{thm_classification_distributive}.
\begin{proof}[Proof of Theorem~\ref{thm_classification_distributive}]
    Let $A=kQ/\Cal I$ be a bound quiver algebra.
    We first show that if $Q$ contains one of the subquivers listed in the statement of Theorem~\ref{thm_classification_distributive}, then $\Cal L_t (A)$ is not a distributive lattice.
    Note that it is enough to prove non-distributivity on the representations of the subquiver, as they correspond to the representations of $Q$ with $0$'s in the vertices outside the subquiver and $\gen$ of these representations is not affected by the rest of the quiver. By Examples~\ref{eg_nondistributive1}, \ref{eg_kronecker}, \ref{eg_nondistributive4}, \ref{eg_nondistributive5}, \ref{eg_nondistributive2}, \ref{eg_nondistributive6}, \ref{eg_nondistributive3},  \ref{eg_nondistributive7} and \ref{eg_nondistributive8}, we conclude that the lattice $\Cal L_t (A)$ is not distributive.
    
    Suppose now that $Q$ does not contain subquivers of type (i) and (ii), and if it contains subquivers of type (iii) then $\alpha\beta$ or $\alpha\gamma\in \Cal I$, $\epsilon^2$ or $\epsilon\mu\in \Cal I$, respectively.
    As seen in Remark~\ref{remark_distributive_finite}, $A$ is a string algebra of finite-representation type and all the indecomposable modules in $\mod A$ are string modules. Moreover, thanks to the restrictions, and especially as there is no vertex with two arrows ending at it and no double arrows in the quiver, each indecomposable module $M$ has corresponding string of one of the three forms:
    \begin{figure}[H]
     \scalebox{0.5}{
    \xymatrix@!{
    &&&& v_i\ar[ld]& v_i\ar[rd]&&&&&&& v_i\ar[ld]\ar[rd]\\
    &&&v_{i+1}\ar[ld]&&& v_{i+1}\ar[rd]&&&&& v_j\ar[ld]&& v_\ell\ar[rd]\\
    &&\iddots\ar[ld]&&&&& \ddots\ar[rd]&&& \iddots\ar[ld]&&&&\ddots\ar[rd]\\
    &v_{i+r}&&&&&&&v_{i+r}& v_{j+r}&&&&&&v_{\ell+s}
    }}\caption{Inverse string (left), Direct string (middle), other string (right).}~\label{fig:direct-inverse}
   \end{figure}
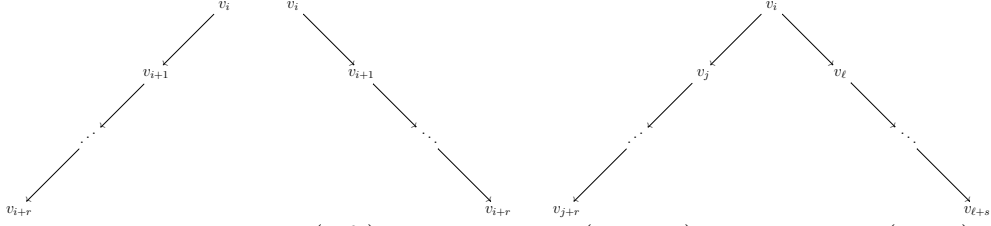
       
    for some $r,s\ge 1$ and $j\neq \ell$. Recall that $\rad A= R_Q/\Cal I$ where $R_Q$ is the arrow ideal of $kQ$, see \cite[Lemma~II.2.10]{ASS}. Then, as $\rad M = M \rad A$, it is immediate to see that in all three cases the top of $M$ is the simple module corresponding to the vertex $v_i$ and so $M$ has a unique maximal submodule (in the first two cases this is an indecomposable module, while in the third it is a direct sum of two indecomposable modules). Hence, each indecomposable module in $\mod A$ has a unique maximal submodule and by Proposition~\ref{prop_distributive_maxsubmodule} we conclude that $\Cal L_t(A)$ is distributive.
\end{proof}

Note that the proof of Theorem~\ref{thm_classification_distributive} also proves the converse of Proposition~\ref{prop_distributive_maxsubmodule}. More precisely, we have the following.

\begin{Prop}\label{prop_iff_unique_max}
     Let $A$ be a finite-dimensional $k$-algebra. Then, the lattice $\Cal L_t (A)$ is distributive if and only if each indecomposable module in $\mod A$ has a unique maximal submodule.
 \end{Prop}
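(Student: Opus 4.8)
The plan is to prove the two implications separately, noting that one of them is already available and the other has, in essence, been carried out inside the proof of Theorem~\ref{thm_classification_distributive}. First I would reduce to a connected bound quiver algebra. By Remark~\ref{rem_equivalence_modquiv} there is an equivalence $\mod A\cong\mod(kQ/\Cal I)$, and both properties in the statement are invariant under such an equivalence: distributivity of $\Cal L_t(A)$ depends only on the category $\mod A$, while ``having a unique maximal submodule'' is preserved because an equivalence respects indecomposability, the lattice of subobjects of each module, and simplicity of quotients (a maximal submodule is exactly a submodule with simple quotient). Writing then $A=kQ/\Cal I$ as a product over the connected components of $Q$, Proposition~\ref{remark_directprod_distributive} together with the fact that each indecomposable module is supported on a single component reduces both sides of the claimed equivalence to the connected case, so I may assume $Q$ is connected.

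The implication ``unique maximal submodule $\Rightarrow$ distributive'' is exactly Proposition~\ref{prop_distributive_maxsubmodule}, so only the converse needs attention. Here I would invoke the classification: if $\Cal L_t(A)$ is distributive, then by Theorem~\ref{thm_classification_distributive} the quiver $Q$ contains none of the forbidden subquivers of types (i), (ii) and (iii) (with the appropriate relations present in case (iii)). As recorded in Remark~\ref{remark_distributive_finite} and at the end of the proof of Theorem~\ref{thm_classification_distributive}, such an $A$ is then a string algebra of finite representation type all of whose indecomposables are string modules, each with an associated string of one of the three shapes in Figure~\ref{fig:direct-inverse}. The crucial point, already observed there, is that because no vertex of $Q$ carries two incoming arrows and $Q$ has no double arrows, every such string has a single source vertex $v_i$; using $\rad M=M\rad A$ one sees that the top of $M$ is the simple module at $v_i$, so $M$ has a unique maximal submodule. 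Combined with Proposition~\ref{prop_distributive_maxsubmodule}, this gives the equivalence.

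The hard part is therefore not this proposition itself but the converse direction of Theorem~\ref{thm_classification_distributive}, whose proof already produces the simple-top computation I rely on; the remaining work is only to assemble these pieces and to check the (routine) invariance of both properties under the reductions of the first step. The one point I would verify carefully is that the string-module analysis genuinely accounts for all indecomposables --- which it does, since Remark~\ref{remark_distributive_finite} rules out band modules --- so that the unique-maximal-submodule conclusion holds for every indecomposable module and not merely for a subclass.
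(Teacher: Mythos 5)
Your proposal is correct and follows essentially the same route as the paper: the forward direction via Proposition~\ref{prop_distributive_maxsubmodule}, and the converse by invoking Theorem~\ref{thm_classification_distributive} together with the string-module simple-top computation from its proof, plus the observation that the equivalence $\mod A\cong\mod(kQ/\Cal I)$ preserves the unique-maximal-submodule property (which the paper justifies by citing \cite[Proposition~21.8]{AF}, where you argue it directly via preservation of subobject lattices --- both are fine). Your explicit reduction to the connected case is a minor extra care step that the paper handles implicitly through its standing assumption in that section.
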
   

 \begin{proof}
     The if part follows from Proposition~\ref{prop_distributive_maxsubmodule}.
     For the other direction, first notice that if $\Cal L_t (A)$ is distributive, then $\mod A\cong \mod (kQ/\Cal I)$ for a bound quiver algebra $kQ/\Cal I$ as in the statement of Theorem~\ref{thm_classification_distributive}. As shown in the proof of the theorem, all indecomposable modules are string modules with a unique maximal submodule. The equivalence $\mod A\cong \mod (kQ/\Cal I)$ preserves the property that indecomposable modules have unique maximal submodules, see~\cite[Proposition~21.8]{AF}.
 \end{proof}

\begin{corollary}\label{cor_maximal_sub_boundquiver}
    Let $A$ be a finite-dimensional $k$-algebra. Then, $\Cal L_t(A)$ is a distributive lattice if and only if $\Cal L_t(A/I)$ is a distributive lattice for every ideal $I$ of $A$.
\end{corollary}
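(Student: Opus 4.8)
The plan is to deduce everything from the characterisation of distributivity in terms of unique maximal submodules given in Proposition~\ref{prop_iff_unique_max}, together with the standard identification of $\mod(A/I)$ with a full subcategory of $\mod A$. So, rather than work with the quiver criterion of Theorem~\ref{thm_classification_distributive}, I would route the whole argument through the intrinsic module-theoretic statement, which behaves well under passing to quotient algebras.

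First I would dispatch the backward implication, which is immediate: taking $I=0$ gives $A/I\cong A$, so if $\Cal L_t(A/I)$ is distributive for every ideal $I$ of $A$, then in particular $\Cal L_t(A)$ is distributive.

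For the forward implication, the central point is that $\mod(A/I)$ is (equivalent to) the full subcategory of $\mod A$ consisting of the modules $M$ with $MI=0$. For any such $M$, the $A$-submodules and the $A/I$-submodules coincide, so the two module structures share the same submodule lattice; consequently $M$ is indecomposable over $A$ if and only if it is indecomposable over $A/I$, and it has a unique maximal submodule over $A$ if and only if it does over $A/I$. I would then argue as follows. Assuming $\Cal L_t(A)$ is distributive, Proposition~\ref{prop_iff_unique_max} gives that every indecomposable $A$-module has a unique maximal submodule. Now fix an ideal $I$ and an indecomposable $A/I$-module $M$; viewing $M$ as an indecomposable $A$-module annihilated by $I$, it has a unique maximal submodule as an $A$-module, hence also as an $A/I$-module. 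Since $A/I$ is again a finite-dimensional $k$-algebra, Proposition~\ref{prop_iff_unique_max} applied to $A/I$ yields that $\Cal L_t(A/I)$ is distributive.

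I do not expect a genuine obstacle in this argument; the only step needing a little care is the verification that the inclusion $\mod(A/I)\hookrightarrow\mod A$ preserves submodule lattices and indecomposability, so that the unique-maximal-submodule property transfers transparently between the two categories. Granting this, the corollary becomes a direct application of Proposition~\ref{prop_iff_unique_max} in both directions, with the content concentrated entirely in the forward implication and the reverse one being the trivial specialisation $I=0$.
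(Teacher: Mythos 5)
Your proof is correct and follows essentially the same route as the paper: both directions go through Proposition~\ref{prop_iff_unique_max}, with the forward implication transferring the unique-maximal-submodule property along the identification of $\mod(A/I)$ with the full subcategory of $A$-modules annihilated by $I$, and the backward implication being the specialisation $I=0$. Your write-up merely makes explicit the step the paper labels ``easy to see''.
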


\begin{proof}
If $A$ is such that $\Cal L_t(A)$ is distributive, then by Proposition~\ref{prop_iff_unique_max} every indecomposable module in $\mod A$ has a unique maximal submodule. It easy to see that $A/I$, for $I$ an ideal of $A$, still has this property (notice that this remains true even if the algebra $A/I$ is not connected). 
Hence $\Cal L_t(A/I)$ is a distributive lattice for every ideal $I$ of $A$ by Proposition~\ref{prop_iff_unique_max}.
The other implication is clear taking $I=0$.
\end{proof} 

Notice that adding relations may result in obtaining a distributive lattice from a non-distributive one.

\begin{example}
    Let $\mathbb{D}_4$ be as in Example~\ref{eg_nondistributive3} and consider the algebra $A=k \mathbb{D}_4/\Cal I$, where $\Cal I$ is the admissible ideal generated by the path $1\to 2\to 3$ (or equivalently we could take $\Cal I$ to be generated by the path $1\to 2\to 4$).
    It is easy to see that every indecomposable module in $\mod A$ has exactly one maximal submodule. Hence by Proposition~\ref{prop_distributive_maxsubmodule} we conclude that the lattice $\Cal L_t(A)$ is distributive. On the other hand, as shown in Example~\ref{eg_nondistributive3}, $\Cal L_t(k\mathbb{D}_4)$ is not distributive.
\end{example}

\subsection{Distributive closure of the lattice of torsion classes}

Notice that $\tors A$ is a subposet but in general not a sublattice of $\Cal L_t(A)$. In this subsection we want to find a more precise relation between the two lattices when their completely join-irreducible elements coincide and $\Cal L_t(A)$ is distributive. Note that when $\Cal L_t(A)$ is distributive, it is a finite lattice by Theorem~\ref{thm_classification_distributive} and Remark~\ref{remark_distributive_finite}. Hence all of its join-irreducibles are completely join-irreducibles. Thus, in this subsection we will say join-irreducible and not completely join-irreducible.

An \emph{order ideal} $\mathrm{I}$ in a poset $(\Cal P,\leq)$ is a subset of $\Cal P$ such that if $x\in \mathrm{I}$ and  $y\leq x$ for $y\in \Cal P$, then $y\in \mathrm{I}$. Let us denote the set of order ideals of $\Cal P$ by $I(\Cal P)$. When $\Cal P$ is finite then the set $I(\Cal P)$ is actually a finite distributive lattice ordered by inclusion and every distributive lattice arises this way by Birkhoff's theorem of finite distributive lattices~\cite{B}.

For a finite lattice $\Cal L$, we denote by $Ji(\Cal L)$ the poset of join-irreducible elements of $\Cal L$ endowed with the partial order induced by the one in $\Cal L$. We then call the {\em distributive closure of $\Cal L$} the lattice $\Cal L^*:=I(Ji(\Cal L))$ of order ideals of the poset $Ji(\Cal L)$, ordered under inclusion. Clearly $\Cal L^*$ is a distributive lattice, being meet and join given by intersection and union respectively. Note that $\Cal L$ can be embedded as a subposet into $\Cal L^*$ via the assignment $\varepsilon\colon \Cal L \to \Cal L^*$ defined by
\[
\ell \mapsto \ell^* := \{x \in \Cal L \mid x\in Ji(\Cal L) \text{ and } x \leq \ell\}.
\] 

Thus, $\ell^*$ is the set of all join-irreducible elements of $\Cal L$ smaller or equal to $\ell$. It is clearly closed under $\leq$, hence it is an order-ideal of $Ji(\Cal L)$.

By Birkhoff's theorem, if $\Cal L$ is a finite distributive lattice, then $\varepsilon\colon \Cal L \to \Cal L^*$ is an isomorphism of lattices, whose inverse is the morphism $\varepsilon^{-1}\colon \Cal L^* \to \Cal L$ defined by $I\mapsto \vee I$ taking an order ideal to the join of its elements in $\Cal L$.

Moving back to the lattices $\tors A$ and $\Cal L_t(A)$, we get the following result.

\begin{lemma}\label{lemma_distr_closure_map}
    Let $A$ be a locally representation directed algebra and assume that $\Cal L_t(A)$ is distributive. Then, the distributive closure map $\varepsilon:\tors A\to (\tors A)^*$ factors as $\varepsilon=\varphi\iota$, where $\iota: \tors A\to \Cal L_t(A)$ is the inclusion map and $\varphi: \Cal L_t(A) \to (\tors A)^*$ is the lattice isomorphism defined by
    $$
    \Cal T \mapsto \varphi(\Cal T):=\{\gen (M) \in \Cal L_t(A)\mid M \text{ is indecomposable and } \gen(M)\subseteq \Cal T\}.
    $$
\end{lemma}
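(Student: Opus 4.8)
The plan is to identify the map $\varphi$ with the Birkhoff embedding of $\Cal L_t(A)$ into its own distributive closure, and then to recognise that this distributive closure is literally $(\tors A)^*$. The whole argument rests on the coincidence of the (completely) join-irreducibles of the two lattices, which is where the locally-representation-directed hypothesis enters.

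First I would record that, since $A$ is locally representation directed, it is representation-finite, so both $\tors A$ and $\Cal L_t(A)$ are finite lattices; hence in both of them join-irreducible and completely join-irreducible mean the same thing. By Theorem~\ref{thm_joinirr} the join-irreducibles of $\Cal L_t(A)$ are exactly the subcategories $\gen(M)$ with $M$ indecomposable, and by Proposition~\ref{prop_taurigid} each such $\gen(M)$ equals the torsion class $\Cal T(M)$. Combined with Theorem~\ref{thm_join_irred_coincide}, this shows that the join-irreducibles of $\tors A$ and of $\Cal L_t(A)$ form the very same set of subcategories. Because both lattices are ordered by inclusion and $\tors A$ is a subposet of $\Cal L_t(A)$, the partial order induced on this common set is the same on either side, so $Ji(\tors A)=Ji(\Cal L_t(A))$ as posets. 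Consequently $(\Cal L_t(A))^*=I(Ji(\Cal L_t(A)))=I(Ji(\tors A))=(\tors A)^*$.

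Next I would observe that, under the description of $Ji(\Cal L_t(A))$ above, the formula defining $\varphi$ reads $\varphi(\Cal T)=\{\gen(M)\in Ji(\Cal L_t(A))\mid \gen(M)\subseteq \Cal T\}=\Cal T^*$, which is precisely the value at $\Cal T$ of the distributive-closure map $\Cal L_t(A)\to (\Cal L_t(A))^*$. Since $\Cal L_t(A)$ is finite and distributive, Birkhoff's theorem (as recalled before the statement) guarantees this map is a lattice isomorphism; composing with the identification $(\Cal L_t(A))^*=(\tors A)^*$ shows simultaneously that $\varphi$ is well defined with target $(\tors A)^*$ and that it is a lattice isomorphism. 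It then remains to check the factorisation $\varepsilon=\varphi\iota$: for $\Cal T\in\tors A$ I would compute $\varphi(\iota(\Cal T))=\{\gen(M)\mid M\text{ indecomposable},\ \gen(M)\subseteq\Cal T\}$, while $\varepsilon(\Cal T)$ is by definition the order ideal $\{x\in Ji(\tors A)\mid x\subseteq \Cal T\}$ of join-irreducibles of $\tors A$ below $\Cal T$; these agree term by term once we use the identification $Ji(\tors A)=\{\gen(M)\mid M\text{ indecomposable}\}$ together with $\gen(M)=\Cal T(M)$ from the first step.

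The main obstacle is conceptual rather than computational: one must keep firmly separate the two closure maps. The map $\varepsilon$ on $\tors A$ is in general only an order embedding (indeed $\tors A$ is essentially never distributive, by Remark~\ref{rem_distributive_tors}), whereas the isomorphism is $\varphi$, defined on the larger, distributive lattice $\Cal L_t(A)$; the content of the lemma is exactly that passing through $\Cal L_t(A)$ turns the Birkhoff embedding of $\tors A$ into a genuine isomorphism onto $(\tors A)^*$. Verifying the poset identification $Ji(\tors A)=Ji(\Cal L_t(A))$ carefully is the one step that genuinely uses the locally-representation-directed hypothesis, via Theorem~\ref{thm_join_irred_coincide} and Proposition~\ref{prop_taurigid}; once it is in place, the remaining assertions reduce to Birkhoff duality for finite distributive lattices.
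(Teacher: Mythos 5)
Your proof is correct and follows essentially the same route as the paper's: finiteness from local representation directedness, the coincidence of join-irreducibles via Theorem~\ref{thm_join_irred_coincide} (with $\gen(M)=\Cal T(M)$ from Proposition~\ref{prop_taurigid}), and Birkhoff's theorem applied to the finite distributive lattice $\Cal L_t(A)$. The paper's proof is just a terser statement of exactly this argument, so your version merely makes explicit the details (the poset identification $Ji(\tors A)=Ji(\Cal L_t(A))$ and the verification of $\varepsilon=\varphi\iota$) that the paper subsumes under ``the result then follows from what we have seen above.''
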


\begin{proof}
    Since $A$ is of finite-representation type, $\Cal L_t(A)$ is finite. Moreover, by Theorem~\ref{thm_join_irred_coincide}, the join-irreducible elements of $\tors A$ coincide with the join-irreducible elements of $\Cal L_t (A)$. The result then follows from what we have seen above.
\end{proof}

In the situation of the above lemma, we say that the distributive closure of $\tors A$ is identified with $\Cal L_t(A)$ (through the lattice isomorphism $\varphi$).

The following result is a consequence of Theorems~\ref{thm_classification_distributive} and \ref{thm_join_irred_coincide}.

\begin{Prop}\label{prop_distr_closure}
    Let $A=kQ/\Cal I$ be a bound quiver algebra. The following are equivalent.
    \begin{enumerate}
        \item $\Cal L_t (A)$ is finite (hence so is $\tors A$) and the distributive closure of $\tors A$ can be identified with $\Cal L_t (A)$;
        \item $A$ is a locally representation directed algebra in the class of string algebras described in Theorem~\ref{thm_classification_distributive};
        \item $Q$ is a quiver where each vertex has at most one arrow coming in and two arrows going out of it and when there are precisely one entering arrow $\alpha$ and two exiting arrows $\beta$ and $\gamma$, then $\alpha\beta$ or $\alpha\gamma$ is in $\Cal I$ and, in addition, $Q$ has no loops and $C\in\Cal I$ for every cycle $C$ in $Q$.
    \end{enumerate}
\end{Prop}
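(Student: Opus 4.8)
The plan is to prove the cycle of equivalences by reducing everything to the three tools already at our disposal: Theorem~\ref{thm_classification_distributive} (distributivity of $\Cal L_t(A)$ in terms of $Q$ and $\Cal I$), Theorem~\ref{thm_join_irred_coincide} (coincidence of join-irreducibles $\iff$ local representation-directedness), and Lemma~\ref{lemma_distr_closure_map} (the explicit identification when $\Cal L_t(A)$ is distributive and $A$ is locally representation directed). I would organise the argument as $(2)\Rightarrow(1)$, $(1)\Rightarrow(2)$, and then $(2)\Leftrightarrow(3)$ separately.

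For $(2)\Rightarrow(1)$: if $A$ lies in the string-algebra class of Theorem~\ref{thm_classification_distributive}, then $\Cal L_t(A)$ is distributive, and since $A$ is locally representation directed it is representation-finite, so $\Cal L_t(A)$ is finite; Lemma~\ref{lemma_distr_closure_map} then yields the identification of $(\tors A)^*$ with $\Cal L_t(A)$. For $(1)\Rightarrow(2)$: the identification supplies a lattice isomorphism $\Cal L_t(A)\cong(\tors A)^*$, and since the latter is a lattice of order ideals it is distributive, so Theorem~\ref{thm_classification_distributive} places $A$ in the named string-algebra class. To see that $A$ is moreover locally representation directed, I would compare cardinalities of join-irreducibles: a lattice isomorphism restricts to a poset isomorphism $Ji(\Cal L_t(A))\cong Ji\big((\tors A)^*\big)$, and $Ji(I(P))\cong P$ for any finite poset $P$ gives $Ji\big((\tors A)^*\big)\cong Ji(\tors A)$. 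By Theorem~\ref{thm_joinirr} the left-hand count equals the number of indecomposables, while the count for $\tors A$ equals the number of bricks; as every brick is indecomposable, equality forces every indecomposable to be a brick, i.e.\ $A$ is locally representation directed.

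The equivalence $(2)\Leftrightarrow(3)$ is where the real content lies. The clause ``$A$ is a string algebra in the class of Theorem~\ref{thm_classification_distributive}'' is by definition the requirement that each vertex of $Q$ has at most one incoming and at most two outgoing arrows, with $\alpha\beta\in\Cal I$ or $\alpha\gamma\in\Cal I$ at every one-in/two-out vertex, which is exactly the first half of $(3)$. So it remains to show that, for such an $A$, being locally representation directed is equivalent to $Q$ having no loops and satisfying $C\in\Cal I$ for every cycle $C$. By Remark~\ref{remark_distributive_finite} every indecomposable is then a string module $M(w)$, so I would analyse when such a module is a brick. Using that $Q$ has no non-oriented cycles (a consequence of forbidding vertices with two incoming arrows), I would argue that a string can visit a vertex twice only by traversing a loop or a full oriented cycle; hence ``no loops and $C\in\Cal I$ for every cycle'' is equivalent to every string having pairwise distinct vertices. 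I would then show that a string module whose string has no repeated vertex is a brick (a proper sub-quotient string giving a non-trivial graph endomorphism would have to reappear, which is impossible when the vertices are distinct), whereas a loop (even with $\epsilon^2\in\Cal I$) or a once-traversed cycle $C\notin\Cal I$ produces an explicit indecomposable with larger endomorphism ring, as in Example~\ref{example_loop}.

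The main obstacle is precisely this last string-combinatorial step: establishing the clean dichotomy ``no repeated vertex $\iff$ brick'' inside the allowed quivers, and verifying that repeating a vertex is possible exactly through a loop or a live cycle. Everything in $(1)\Leftrightarrow(2)$ is bookkeeping with the cited theorems and the fact that a lattice isomorphism preserves join-irreducibles; the delicate part is controlling the strings of these algebras and checking that a loop with $\epsilon^2\in\Cal I$, or any oriented cycle $C\notin\Cal I$, genuinely yields a non-brick indecomposable.
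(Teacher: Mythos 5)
Your proposal is correct and follows essentially the same route as the paper: the equivalence of (1) and (2) is obtained from Theorem~\ref{thm_classification_distributive}, Theorem~\ref{thm_join_irred_coincide} and Lemma~\ref{lemma_distr_closure_map} (your Birkhoff-style counting via $Ji(I(P))\cong P$ together with the brick-versus-indecomposable cardinality comparison is just a slightly more explicit rendering of the paper's ``identified if and only if same join-irreducibles and distributive'' step), and the equivalence of (2) and (3) is the same string-module analysis. In particular your flagged ``main obstacle'' is handled in the paper at exactly the level you sketch: a cycle $C\notin\Cal I$ (including a loop with $\epsilon^2\in\Cal I$, as in Example~\ref{example_loop}) gives a direct string with repeated endpoint and an explicit non-invertible endomorphism sending the top copy of that vertex to the bottom copy, hence a non-brick, while under (3) every string has one of the three shapes of Figure~\ref{fig:direct-inverse} with pairwise distinct vertices and the corresponding module is a brick.
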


\begin{proof}
    Note that by \cite[Lemma~II.8.1]{Erd} the algebras described in Theorem~\ref{thm_classification_distributive} are all of finite-representation type. Hence, under the assumptions of (2) or (3) $\tors A$ and $\Cal L_t (A)$ are finite lattices. 

    The distributive closure of $\tors A$ can be identified with $\Cal L_t(A)$ if and only if the two lattices have the same join-irreducibles and $\Cal L_t(A)$ is distributive. By Theorems~\ref{thm_join_irred_coincide} and \ref{thm_classification_distributive}, this happens if and only if $A$ is a locally representation directed algebra in the class of string algebras described in Theorem~\ref{thm_classification_distributive}. Hence (1) and (2) are equivalent.

    Finally, we show the equivalence between (2) and (3). In other words, it remains to describe which algebras in our classification are locally representation directed.
    If $Q$ contains a cycle $C\not\in \Cal I$ of length $r\ge 1$, then there is a direct string $S$ with $s(S)=t(S)=v_i$ and a non-zero non-isomorphism of the corresponding string module $M(S)\to M(S)$ mapping the top copy of $v_i$ to the bottom copy of $v_i$ (and everything else to zero). Then $M(S)$ is an indecomposable module that is not a brick and $A$ is not locally representation directed.

    On the other hand, suppose that $Q$ contains no loops and $C\in\Cal I$ for every cycle $C$ in $Q$. Then each indecomposable module $M$ has corresponding string of one of the three forms in Figure~\ref{fig:direct-inverse},
    with no vertices repeating (so that each vertex has either a $0$ or $1$-dimensional $k$-vector space) and it is easy to see that $M$ is a brick.
\end{proof}

In the following example, we point out that the lattices $\tors A$ and $\Cal L_t(A)$ may both be distributive without coinciding. Moreover, it is also possible to have $\tors A$ distributive and $\Cal L_t(A)$ not distributive.
\begin{example}
\begin{itemize}
    \item Let $A=kQ/\langle\epsilon^2\rangle$ be as in Example~\ref{example_loop}, where
     \[Q= \xymatrix{1. \ar@(ul,ur)^{\epsilon}}\]
    Then $\tors A$ is distributive by Remark~\ref{rem_distributive_tors} and $\Cal L_t(A)$ is distributive by Theorem~\ref{thm_classification_distributive}. However, as seen in Example~\ref{example_loop} there is an indecomposable module which is not a brick and hence by Theorem~\ref{thm_join_irred_coincide} the join-irreducibles of the two lattices do not coincide, see Figure~\ref{fig:loop_example}.

    \begin{figure}[ht]
     \centerline{
        \xymatrix@C=1em{
        \Cal T({\begin{smallmatrix}
            1
        \end{smallmatrix}})=\mod A &&
        \gen({\begin{smallmatrix}
            1\\1
        \end{smallmatrix}})=\mod A
        \\
         && \gen({\begin{smallmatrix}\ar@{-}[u]
            1
        \end{smallmatrix}})=\add({{\begin{smallmatrix}
            1
        \end{smallmatrix}}})\\
        0\ar@{-}[uu]&&0\ar@{-}[u]
        }}
        \caption{On the left $\tors A$, on the right $\Cal L_t(A)$.}
        \label{fig:loop_example}
    \end{figure}
    
    \item Let now $A=kQ/\Cal I$ be a bound quiver algebra with underlying quiver
    \[
    \xymatrix{
    Q:&1 \ar@(ur,dr)\ar@(dl,ul)}
    \]
    then $\tors A$ is distributive and, for any choice of admissible ideal $\Cal I$, it is isomorphic to the lattice on the left of Figure~\ref{fig:loop_example} by Remark~\ref{rem_distributive_tors}. On the other hand, $\Cal L_t(A)$ is not distributive by Theorem~\ref{thm_classification_distributive} and, since $A$ is representation-infinite, it is an infinite lattice by Theorem~\ref{thm_joinirr}.
    \end{itemize}
\end{example}

\subsection{A different way of realising \texorpdfstring{$\Cal L_t(A)$}{LtA}}
When $\Cal L_t(A)$ is distributive, by Birkhoff's theorem, it is isomorphic to the lattice of order ideals of the poset of its join-irreducibles. In this subsection, we study this poset further, describing it in terms of the indecomposable projective modules in $\mod A$ and present a simple way of realising $\Cal L_t(A)$.

\begin{remark}\label{rem_max_epim}
    Let $M_1, \dots, M_n, N$ be $A$-modules where $N$ is indecomposable with a unique maximal submodule and let $f\colon M_1\oplus\cdots\oplus M_n \to N$ be an epimorphism. Then, there exists $k \in \{1,\dots, n\}$ such that $f\varepsilon_k\colon M_k \to N$ is an epimorphism (here, $\varepsilon_i \colon M_i\to M_1\oplus\cdots\oplus M_n$ denotes the canonical coprojection for every $i=1,\dots,n$). Indeed, if this is not the case, it means that the image of the morphisms $f \varepsilon_i$ is contained in $\rad(N)$ for every $i=1, \dots, n$, hence $\im(f)\subseteq \rad (N)$. Thus $f$ is not an epimorphism.
\end{remark}

\begin{lemma}
Assume that $\Cal L_t(A)$ is distributive. Then, for any indecomposable module $M$ there exists a unique (up to isomorphism) projective indecomposable module $P_M$ such that $\gen(M) \subseteq \gen(P_M)$.
\end{lemma}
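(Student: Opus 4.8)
The plan is to reduce the entire statement to an identification of the top of $M$. Since we assume $\Cal L_t(A)$ is distributive, Proposition~\ref{prop_iff_unique_max} guarantees that every indecomposable module has a unique maximal submodule; equivalently, $\rad M$ is that maximal submodule and $\Top M = M/\rad M$ is simple. I would write $\Top M \cong S_i$ for the simple module attached to a vertex $i$, and let $P_i$ denote the indecomposable projective with $\Top P_i \cong S_i$, recalling that the indecomposable projectives are, up to isomorphism, in bijection with the simple modules via their tops.

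For existence, I would invoke the projective cover. As $M$ has simple top $S_i$, it is a local (hence cyclic) module and its projective cover is the epimorphism $P_i \twoheadrightarrow M$. Thus $M \in \gen(P_i)$, and since $\gen(M)$ is the smallest pretorsion class containing $M$ (Definition~\ref{defn_Gen}), we get $\gen(M) \subseteq \gen(P_i)$. Setting $P_M := P_i$ produces a projective indecomposable with the required property.

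For uniqueness, the key observation I would use is that for any module $P$ the containment $\gen(M) \subseteq \gen(P)$ is equivalent to $M \in \gen(P)$: one direction is immediate from $M \in \gen(M)$, and the other from minimality of $\gen(M)$. So let $P$ be any indecomposable projective with $M \in \gen(P)$. By Remark~\ref{rem_closure-of-add}, $M$ is a quotient of $P^n$ for some $n$. Applying the right-exact functor $\Top(-) = (-)\otimes_A (A/\rad A)$ yields a surjection $(\Top P)^n \twoheadrightarrow \Top M$. Writing $\Top P \cong S_j$, this exhibits the simple module $\Top M \cong S_i$ as a quotient of $(S_j)^n$, which forces $i = j$. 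Hence $\Top P \cong S_i \cong \Top P_M$, and since an indecomposable projective is determined up to isomorphism by its top, $P \cong P_M$.

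The main obstacle, though a mild one, is the uniqueness argument, where one must pass from a containment of pretorsion classes to a statement about tops. The crux is precisely that distributivity makes $\Top M$ an \emph{indecomposable} semisimple module, so that being a quotient of $(S_j)^n$ pins down $j$ uniquely; without this hypothesis $\Top M$ could be a sum of several non-isomorphic simples, and no single indecomposable projective would dominate $\gen(M)$.
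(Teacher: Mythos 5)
Your proof is correct, but it routes both halves differently from the paper. For \emph{existence}, the paper does not mention projective covers at all: it takes any projective presentation $P_1\oplus\cdots\oplus P_n \twoheadrightarrow M$ and uses distributivity directly through Theorem~\ref{thm_distributive-characterisation}, which gives $\gen(\gen(P_1)\cup\cdots\cup\gen(P_n))=\add(\gen(P_1)\cup\cdots\cup\gen(P_n))$, so that the indecomposable $M$ must land in a single $\gen(P_j)$. You instead first convert distributivity into the unique-maximal-submodule property via Proposition~\ref{prop_iff_unique_max}, observe that $\Top M$ is then simple, and take the projective cover $P_i\twoheadrightarrow M$; this is more classical representation theory and bypasses the $\add=\gen$ characterisation entirely. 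For \emph{uniqueness}, the paper uses Proposition~\ref{prop_iff_unique_max} together with Remark~\ref{rem_max_epim} to extract, from $M\in\gen(P)$, an epimorphism from a \emph{single} copy $P\twoheadrightarrow M$, and then invokes \cite[Lemma~5.1]{Lein}; you instead apply the right-exact functor $\Top(-)$ to a surjection $P^n\twoheadrightarrow M$ (justified by Remark~\ref{rem_closure-of-add}) to get $(\Top P)^n\twoheadrightarrow \Top M$ and compare simple tops. The two uniqueness arguments are mathematically equivalent --- both ultimately exploit that $\Top M$ is simple and that an indecomposable projective is determined by its top --- but yours avoids the component-extraction step of Remark~\ref{rem_max_epim} at the cost of a small homological-algebra input (right-exactness of $\Top$), while the paper's version stays within the lattice-theoretic toolkit it has already built. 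Your closing remark correctly identifies where distributivity enters: without a simple top, $\Top M$ could involve several non-isomorphic simples and no single indecomposable projective would generate $M$.
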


\begin{proof}
Let $M \in \mod A$ be an indecomposable module and let $P_1,\dots, P_n$ be indecomposable projective modules such that $M$ is a quotient of $P_1\oplus \cdots\oplus P_n$. Set $T_i:=\gen(P_i)$ and notice that since $\Cal L_t(A)$ is distributive, $M \in \gen(\Cal T_1 \cup \dots \cup \Cal T_n )=\add(\Cal T_1 \cup \dots \cup \Cal T_n )$ by Theorem~\ref{thm_distributive-characterisation}, hence there exists $j \in \{1,\dots, n\}$ such that $M \in \Cal T_j=\gen(P_j)$. Now assume that $P$ and $P'$ are two projective indecomposable modules such that $M \in \gen(P)$ and $M \in \gen(P')$. Since $\Cal L_t(A)$ is distributive, $M$ has a unique maximal submodule by Proposition~\ref{prop_iff_unique_max}, hence by Remark~\ref{rem_max_epim}, there exist epimorphisms $P \to M$ and $P'\to M$. Thus, $P\cong P'$ by \cite[Lemma~5.1]{Lein}.
\end{proof}

For an indecomposable module $N \in \mod A$, set
    $$\gen(N)^* :=\{\gen(M)\mid M \text{ is indecomposable and } \gen(M)\subseteq \gen(N)\}.$$

\begin{Prop}
    Assume that $\Cal L_t(A)$ is distributive. Then, the set of join-irreducible elements of $\Cal L_t(A)$ is the disjoint union
    $$
    Ji( \Cal L_t(A) )= \coprod_{P} \gen(P)^*
    $$
    where $P$ runs over a set of representatives of the isomorphism classes of the indecomposable projective modules of $\mod A$.
\end{Prop}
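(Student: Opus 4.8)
The plan is to combine the description of join-irreducibles from Theorem~\ref{thm_joinirr} with the existence-and-uniqueness statement of the preceding Lemma, so that the claimed decomposition reduces to a bookkeeping exercise indexed by the indecomposable projectives. The essential mathematics has already been done in the preceding Lemma; here I would only need to assemble it.

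First I would record that, since $\Cal L_t(A)$ is distributive, it is finite by Theorem~\ref{thm_classification_distributive} together with Remark~\ref{remark_distributive_finite}, so that every join-irreducible is completely join-irreducible. By Theorem~\ref{thm_joinirr} the completely join-irreducibles are exactly the classes $\gen(M)$ with $M$ indecomposable, the assignment $M \mapsto \gen(M)$ being a bijection with the isomorphism classes of indecomposables. Hence
$$
Ji(\Cal L_t(A)) = \{\gen(M) \mid M \in \mod A \text{ indecomposable}\}.
$$
I would also note the elementary equivalence $M \in \gen(P) \Leftrightarrow \gen(M) \subseteq \gen(P)$, valid because $\gen(M)$ is the smallest pretorsion class containing $M$. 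With this, each $\gen(P)^*$ consists of elements of the form $\gen(M)$, so $\gen(P)^* \subseteq Ji(\Cal L_t(A))$ for every indecomposable projective $P$.

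Next I would establish that the sets $\gen(P)^*$ cover $Ji(\Cal L_t(A))$. Given an indecomposable $M$, the preceding Lemma supplies an indecomposable projective $P_M$ with $\gen(M) \subseteq \gen(P_M)$, that is $\gen(M) \in \gen(P_M)^*$. Thus every join-irreducible lies in at least one $\gen(P)^*$, giving $Ji(\Cal L_t(A)) = \bigcup_P \gen(P)^*$ as $P$ ranges over representatives of the isomorphism classes of indecomposable projectives.

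Finally, for disjointness, I would suppose $\gen(M) \in \gen(P)^* \cap \gen(P')^*$ for indecomposable projectives $P,P'$. Then $\gen(M) \subseteq \gen(P)$ and $\gen(M) \subseteq \gen(P')$, equivalently $M \in \gen(P)$ and $M \in \gen(P')$, and the uniqueness clause of the preceding Lemma forces $P \cong P'$. Hence the $\gen(P)^*$ are pairwise disjoint and their union is the asserted coproduct. The only point requiring care is seeing that \emph{both} halves of the preceding Lemma are used—existence for the covering and uniqueness for the disjointness—but since that Lemma (resting on Proposition~\ref{prop_iff_unique_max} and Remark~\ref{rem_max_epim}) already delivers both, I expect no genuine obstacle at this stage.
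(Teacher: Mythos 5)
Your proposal is correct and follows essentially the same route as the paper: the paper's own proof is the one-liner ``Immediately follows from the previous lemma,'' and your write-up supplies exactly the intended bookkeeping---Theorem~\ref{thm_joinirr} (with finiteness via Theorem~\ref{thm_classification_distributive} and Remark~\ref{remark_distributive_finite}, as the paper itself notes at the start of the subsection) to identify $Ji(\Cal L_t(A))$ with the classes $\gen(M)$ for $M$ indecomposable, then the existence clause of the preceding Lemma for the covering and its uniqueness clause for the disjointness. Your observation that $M \in \gen(P) \Leftrightarrow \gen(M) \subseteq \gen(P)$ is the correct (and needed) translation between the Lemma's formulation and the definition of $\gen(P)^*$, so there is no gap.
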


\begin{proof}
    Immediately follows from the previous lemma.
\end{proof}

Let $\Cal E$ be the set of isomorphism classes of indecomposable modules of $\mod A$ endowed with the preorder induced by the epimorphisms, that is, for indecomposable modules $M, N \in \mod A$, $[M]_{\cong} \leq [N]_{\cong}$ if and only if there exists an epimorphism $N \to M$.

\begin{Prop}
Assume that $\Cal L_t(A)$ is distributive and consider the lattice $I(\Cal E)$ of order-ideals of $\Cal E$. Then, $I(\Cal E) \cong \Cal L_t(A)$.    
\end{Prop}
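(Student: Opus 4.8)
The plan is to show that the assignment $[M]_{\cong} \mapsto \gen(M)$ furnishes an isomorphism of posets between $\Cal E$ and the poset $Ji(\Cal L_t(A))$ of join-irreducibles of $\Cal L_t(A)$, and then to invoke Birkhoff's theorem. Recall first that since $\Cal L_t(A)$ is distributive it is finite, by Theorem~\ref{thm_classification_distributive} together with Remark~\ref{remark_distributive_finite}; hence $A$ is representation-finite, $\Cal E$ is a finite set, and all join-irreducibles of $\Cal L_t(A)$ are completely join-irreducible. By Theorem~\ref{thm_joinirr}, the completely join-irreducibles of $\Cal L_t(A)$ are exactly the subcategories $\gen(M)$ with $M$ indecomposable, and $\gen(M)=\gen(N)$ forces $M\cong N$. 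Thus the map $\varphi\colon \Cal E \to Ji(\Cal L_t(A))$, $[M]_{\cong}\mapsto \gen(M)$, is a well-defined bijection.

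The heart of the argument is to check that $\varphi$ is an isomorphism of posets, that is, that for indecomposable $M,N$ there is an epimorphism $N\to M$ if and only if $\gen(M)\subseteq \gen(N)$. The backward direction is immediate: an epimorphism $N\to M$ exhibits $M$ as a quotient of $N$, so $M\in\gen(N)$ and therefore $\gen(M)\subseteq\gen(N)$. For the forward direction, $\gen(M)\subseteq\gen(N)$ means $M\in\gen(N)$, so by Remark~\ref{rem_closure-of-add} there is an epimorphism $N^k\to M$ for some $k\ge 1$. This is precisely where distributivity enters: by Proposition~\ref{prop_iff_unique_max} the indecomposable $M$ has a unique maximal submodule, so Remark~\ref{rem_max_epim} applied to the coproduct $N^k=N\oplus\cdots\oplus N\to M$ guarantees that one of the $k$ coprojected components $N\to M$ is already an epimorphism. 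Hence both $\varphi$ and $\varphi^{-1}$ are order-preserving; in particular $\Cal E$ is genuinely a poset (antisymmetry holds because $\gen(M)=\gen(N)$ forces $M\cong N$ by Lemma~\ref{lemma_indec_ess_gen}).

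Finally, an isomorphism of (finite) posets induces an isomorphism of their lattices of order ideals, so $I(\Cal E)\cong I(Ji(\Cal L_t(A)))=(\Cal L_t(A))^*$. Since $\Cal L_t(A)$ is a finite distributive lattice, Birkhoff's theorem~\cite{B} yields $(\Cal L_t(A))^*\cong \Cal L_t(A)$ through the embedding $\varepsilon$ described above, and composing the two isomorphisms gives $I(\Cal E)\cong \Cal L_t(A)$. The only delicate point is the reduction from an epimorphism $N^k\to M$ to an epimorphism out of a \emph{single} copy of $N$, which is exactly where the hypothesis that $\Cal L_t(A)$ is distributive --- equivalently, that every indecomposable module has a unique maximal submodule --- is indispensable; without it one obtains only a preorder on $\Cal E$ that need not match inclusion of the $\gen(M)$.
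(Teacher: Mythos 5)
Your proposal is correct and follows essentially the same route as the paper's proof: both establish the poset isomorphism $\Cal E \to Ji(\Cal L_t(A))$, $[M]_{\cong}\mapsto \gen(M)$, using Proposition~\ref{prop_iff_unique_max} and Remark~\ref{rem_max_epim} to reduce an epimorphism $N^k\to M$ to one from a single copy of $N$, and then pass to order ideals and conclude via Birkhoff's theorem. Your write-up merely makes explicit some points the paper leaves implicit (finiteness of $\Cal L_t(A)$, the bijection from Theorem~\ref{thm_joinirr}, and antisymmetry of $\Cal E$ via Lemma~\ref{lemma_indec_ess_gen}), all of which are accurate.
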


\begin{proof}
    First recall that by Proposition~\ref{prop_iff_unique_max}, since $\Cal L_t(A)$ is distributive, every indecomposable module $M$ has a unique maximal submodule. Thus, for indecomposable modules $M, N \in \mod A$, $[M]_{\cong} \leq [N]_{\cong}$ if and only if $\gen(M)\subseteq \gen(N)$, by Remark~\ref{rem_max_epim}. It follows that there is an isomorphism of posets $\Cal E \to Ji(\Cal L_t(A))$ given by $[M]_{\cong}\mapsto \gen(M)$, which extends to an isomorphism $I(\Cal E)\cong I(Ji(\Cal L_t(A)))=\Cal L_t(A)^*$. Since $\Cal L_t(A)$ is distributive, we get the conclusion.
\end{proof}

\appendix
\section{}

As an application of the importance of the lattices introduced in this paper, we show some ways they can be used to build pretorsion theories in $\mod A$. Moreover, we provide some explicit examples: we describe all the pretorsion theories in $\mod (k\mathbb{A}_2)$, and the lattices of pretorsion classes in $\mod (k\mathbb{A}_3)$ for the three possible orientations of $\mathbb{A}_3$.

\subsection{Building pretorsion theories using lattices}

In \cite[Theorems~3.1 and 4.2]{CF}, the first two authors showed how to build pretorsion theories using torsion theories in nice enough categories, such as $\mod A$. However, not all pretorsion theories can be built using the above mentioned results.

Lattices $\Cal L_t(A)$ of pretorsion classes and $\Cal L_{tf}(A)$ of pretorsion-free classes can be used to build all the remaining ones. In fact, all pretorsion theories have first half in $\Cal L_t(A)$ and second half in $\Cal L_{tf}(A)$. Recall that, unlike for classic torsion theories, the first half of a pretorsion theory does not determine its second half.
We show some methods to see which such pairs give a pretorsion theory and which can be easily discarded.

In the following, by $\Cal T^\perp$ we mean all objects $X\in\mod A$ such that $\Hom (T, X)=0$ for all $T\in\Cal T$. Dually, by $^\perp\Cal F$ we mean all objects $X\in\mod A$ such that $\Hom(X,F)=0$ for all $F\in\Cal F$.

\begin{Prop}\label{prop_build_pret_fromlattices}
  If $(\Cal T, \Cal F)$ is a pretorsion theory in $\mod A$, then $\Cal T=\gen (\Cal X)$  and $\Cal{F}=\cogen (\Cal Y)$, for two classes $\Cal X, \Cal Y$ of indecomposable modules.
  Moreover, 
  \begin{enumerate}
      \item if $\Cal T$ is a torsion class and $\Cal F$ is a torsion-free class, then $(\Cal T, \Cal F)$ is a pretorsion theory if and only if ${}^\perp\Cal F\subseteq \Cal T$ if and only if $\Cal T^\perp \subseteq \Cal F$;
      \item if $(\Cal T, \Cal F)$ is a pretorsion theory, then $\Cal T^\perp \subseteq \Cal F$ and ${}^\perp\Cal F\subseteq \Cal T$;
      \item if $\Cal T\in\Cal L_t(A)$, $\Cal{F}\in\Cal L_{tf}(A)$ and $\add (\Cal T\cup \Cal F)=\mod A$, then $(\Cal T, \Cal F)$ is a pretorsion theory. 
  \end{enumerate}
\end{Prop}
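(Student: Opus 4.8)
The plan is to establish the opening claim and the three items in turn, using the monocoreflective/epireflective descriptions of pretorsion and pretorsion-free classes. For the opening statement, a pretorsion class $\Cal T$ is closed under quotients and finite direct sums by Proposition~\ref{prop_noetherian}, so Lemma~\ref{lemma_gen_tor} gives $\Cal T=\add(\Cal X)=\gen(\Cal X)$ for $\Cal X$ the class of indecomposables in $\Cal T$, and dually $\Cal F=\cogen(\Cal Y)$. For (2), fix $X\in\Cal T^\perp$ and a short $\Cal Z$-exact sequence $T_X\xrightarrow{f}X\xrightarrow{g}F_X$; since $\Hom(T_X,X)=0$ we get $f=0$, and as $g$ is the $\Cal Z$-cokernel of $f=0$, the universal property applied to $\Id{X}$ (which satisfies $\Id{X}f=0\in\Triv$) produces a retraction of $g$. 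Thus $g$ is a split monomorphism, and being also an epimorphism ($\Cal Z$-cokernels are epi) it is an isomorphism, so $X\cong F_X\in\Cal F$. Dually, for $X\in{}^\perp\Cal F$ one forces $g=0$, making the $\Cal Z$-kernel $f$ a split epimorphism and hence an isomorphism with $T_X\in\Cal T$; this gives ${}^\perp\Cal F\subseteq\Cal T$.

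For (1), the equivalence ${}^\perp\Cal F\subseteq\Cal T\iff\Cal T^\perp\subseteq\Cal F$ follows from the order-reversing Galois connection $(-)^\perp,\ {}^\perp(-)$ together with the closure identities ${}^\perp(\Cal T^\perp)=\Cal T$ and $({}^\perp\Cal F)^\perp=\Cal F$ for a torsion class and a torsion-free class. The forward implication is exactly (2). For the converse, the condition $\Hom(T,F)=\Triv(T,F)$ holds for free: any $f\colon T\to F$ factors as $T\twoheadrightarrow\im f\hookrightarrow F$ with $\im f\in\Cal T\cap\Cal F=\Cal Z$, since $\Cal T$ is closed under quotients and $\Cal F$ under submodules. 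For the $\Cal Z$-exact sequence I would take $t(X)\hookrightarrow X\twoheadrightarrow X/r(X)$, where $t(X)$ is the torsion submodule for $(\Cal T,\Cal T^\perp)$ and $r(X)\in{}^\perp\Cal F$ is the $\Cal F$-reject; then $X/r(X)\in\Cal F$, $r(X)\subseteq t(X)$, and the composite has image $t(X)/r(X)\in\Cal Z$, hence is trivial. To see $t(X)\hookrightarrow X$ is the $\Cal Z$-kernel, observe that a trivial $\pi\lambda$ has image inside the $\Cal Z$-trace $W$ of $X/r(X)$ (the sum of images of morphisms out of objects of $\Cal Z$), which lies in $\Cal Z$; pulling back, $\im\lambda\subseteq\pi^{-1}(W)$, an extension of $W\in\Cal T$ by $r(X)\in\Cal T$, so $\pi^{-1}(W)\in\Cal T$ because $\Cal T$ is closed under extensions, giving $\im\lambda\subseteq t(X)$. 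Dually, if $h|_{t(X)}=ba$ with $a\colon t(X)\to Z\in\Cal Z\subseteq\Cal F$, then $a$ kills $r(X)\in{}^\perp\Cal F$ because $\Hom({}^\perp\Cal F,\Cal F)=0$, so $h$ factors through $X/r(X)$ and $\pi$ is the $\Cal Z$-cokernel.

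For (3), condition (a) holds exactly as above. For the sequences, the hypothesis $\add(\Cal T\cup\Cal F)=\mod A$ and Krull--Schmidt give a splitting $X\cong T_X\oplus F_X$ with $T_X\in\Cal T$ and $F_X\in\Cal F$; note that $X/T_X\cong F_X\in\Cal F$ forces the reject $s(X)\subseteq T_X\subseteq t(X)$. One must resist using the split sequence $T_X\to X\to F_X$, as it is generally \emph{not} $\Cal Z$-exact: nonzero trivial morphisms into $F_X$ arising from $\Cal Z$-submodules of $F_X$ obstruct the universal property. Instead I would again use the canonical $t(X)\hookrightarrow X\twoheadrightarrow f(X)=X/s(X)$. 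For the $\Cal Z$-kernel, with $W$ the $\Cal Z$-trace of $f(X)$, apply the split projection $q\colon X\to F_X$, which factors as $q=p\pi$; then $q(\pi^{-1}(W))=p(W)=:W'\in\Cal Z$, so $\pi^{-1}(W)\subseteq q^{-1}(W')=T_X\oplus W'\in\Cal T$ (now using closure under direct sums rather than extensions), whence $\im\lambda\subseteq t(X)$. For the $\Cal Z$-cokernel, if $h|_{t(X)}=ba$ with $a\colon t(X)\to Z\in\Cal Z\subseteq\Cal F$, extend $a$ along the split projection $q_T\colon X\to T_X$ to $\tilde a:=a\,q_T\colon X\to Z$; since $Z\in\Cal F$ and $s(X)$ is the $\Cal F$-reject of $X$, we get $\tilde a(s(X))=0$, so $a(s(X))=0$ and $h(s(X))=0$, i.e.\ $h$ factors through $f(X)$.

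The main obstacle is precisely the verification of the two universal properties in (b): recognising that the naive split sequence fails and that the correct canonical sequence $t(X)\to X\to f(X)$ must be controlled against \emph{all} $\Cal Z$-trivial morphisms in and out of its end terms. The two cases rely on genuinely different closure properties — extension-closedness of torsion classes in (1) versus the direct-sum splitting supplied by $\add(\Cal T\cup\Cal F)=\mod A$ in (3) — and the crux in each is forcing the preimage $\pi^{-1}(W)$ into $\Cal T$ (for the $\Cal Z$-kernel) and the reject $s(X)$ into $\ker h$ (for the $\Cal Z$-cokernel).
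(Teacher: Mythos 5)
Your proof is correct, and it takes a noticeably more self-contained route than the paper's, which outsources the two hardest points to the literature: the paper proves (1) by citing \cite[Theorem~3.1]{CF} and (2) by citing \cite[Proposition~2.7]{FFG} (namely $\Cal F=\Cal T^{\perp_{\Cal Z}}$ together with the observation $0\in\Cal Z$), whereas you re-derive both from the universal properties of $\Cal Z$-kernels and $\Cal Z$-cokernels --- in (2) by applying the $\Cal Z$-cokernel (resp.\ $\Cal Z$-kernel) property to the identity morphism to collapse the $\Cal Z$-exact sequence, and in (1) by exhibiting the canonical sequence $t(X)\hookrightarrow X\twoheadrightarrow X/r(X)$ coming from the two torsion pairs $({}^\perp\Cal F,\Cal F)$ and $(\Cal T,\Cal T^\perp)$ with ${}^\perp\Cal F\subseteq\Cal T$, which is precisely the content of the cited CF theorem, here reproved inline. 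The more interesting divergence is in (3): the paper first proves triviality of $\Hom(\Cal T,\Cal F)$ via the epireflection $T\twoheadrightarrow F_T\in\Cal Z$ (your image factorization $T\twoheadrightarrow\im f\hookrightarrow F$ is an equally valid shortcut), then builds the short $\Cal Z$-exact sequence only for \emph{indecomposable} $M$ --- namely $M\xrightarrow{1_M}M\to Z_M$ with $Z_M$ the $\Cal F$-reflection when $M\in\Cal T$, and dually when $M\in\Cal F$ --- where one end being an identity makes the universal-property checks nearly immediate, and finally assembles a general $X$ by Krull--Schmidt direct sums of these sequences. You instead verify both universal properties directly for the single uniform sequence $t(X)\hookrightarrow X\twoheadrightarrow X/s(X)$ on an arbitrary $X$, controlling $\Cal Z$-trivial maps via the $\Cal Z$-trace $W$ and the reject $s(X)$, with the splitting $X\cong T_X\oplus F_X$ doing the work that extension-closedness does in case (1) --- a distinction you diagnose correctly, as is your observation that the naive split sequence $T_X\to X\to F_X$ is generally not $\Cal Z$-exact (the paper sidesteps this issue rather than confronting it). Your route costs more bookkeeping (e.g.\ $\pi^{-1}(W)\subseteq T_X\oplus W'$ in the kernel check, and $q_T$ restricting to the identity on $s(X)\subseteq T_X$ in the cokernel check) but yields one canonical sequence for all $X$; the paper's route is shorter but hides the machinery in citations and in the final Krull--Schmidt assembly. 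Two microscopic glosses in your write-up, neither a genuine gap: triviality of zero morphisms requires $\Cal Z\neq\emptyset$ (the paper makes $0\in\Cal Z$ explicit in its proof of (2), and it holds here since $\Cal T$ and $\Cal F$ both contain the zero module), and the $\Cal Z$-trace is a submodule attained by finitely many morphisms because all modules involved have finite length.
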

\begin{proof}
    The initial statement follows from Lemma~\ref{lemma_gen_tor} and its dual, while (1) follows from \cite[Theorem~3.1]{CF}.

    (2) If $(\Cal T, \Cal F)$ is a pretorsion theory, then by \cite[Proposition~2.7]{FFG} we have that $\Cal F=\Cal T^{\perp_{\Cal Z}}:=\{ M\in\mod A\mid \Hom (\Cal T,M)=\Triv(\Cal T, M)\}$ for $\Cal Z:=\Cal T\cap \Cal F$. Noting that $0\in \Cal Z$, we conclude that $\Cal T^\perp \subseteq \Cal F$. Dually, we have that ${}^\perp\Cal F\subseteq \Cal T$.

    (3) Assume that $\Cal T\in\Cal L_t(A)$, $\Cal{F}\in\Cal L_{tf}(A)$ and $\add (\Cal T\cup \Cal F)=\mod A$. Let $f:T\to F$ be a morphism with $T\in\Cal T$ and $F\in \Cal F$. As $\Cal F$ is epireflective, there exists an epimorphic $\Cal F$-envelope $g: T\to F_T$. Moreover, as $\Cal T$ is closed under quotients, it follows that $F_T\in \Cal Z:=\Cal T\cap \Cal F$. By definition of $\Cal F$-envelope, it follows that $f$ factors through $g$ and so $f$ is $\Cal Z$-trivial.
    
    Let $M\in \mod A$ be an indecomposable module. By assumption, $M\in\Cal T$ or $M\in \Cal F$. In the first case, we show that there is a short $\Cal Z$-exact sequence of the form
    \[M\xrightarrow{1_M} M\xrightarrow{g} Z_M\]
    with $Z_M\in \Cal Z$. As $M\in\Cal T$, it is clear that the identity map is a $\Cal T$-cover of $M$. Moreover, as $\Cal F$ is epireflective, there is an epimorphic $\Cal F$-envelope $g: M\to Z_M$, where by the above argument $Z_M\in \Cal Z$.
    The fact that $1_M$ is a $\Cal Z$-kernel of $g$ is clear.
    To see that $g$ is a $\Cal Z$-cokernel of $1_M$, let $h: M\to N$ be a $\Cal Z$-trivial morphism, that is $h=f\circ e$ for some $Z\in\Cal Z$ and morphisms $e:M\to Z$ and $f: Z\to N$. As $g$ is an $\Cal F$-envelope and $Z\in \Cal F$, it follows that $e$ factors through $g$. Hence $h$ factors through $g$ in a unique way, as $g$ is an epimorphism.
    By a dual argument, if $M\in \Cal F$, then there is a short $\Cal Z$-exact sequence of the form $Z_M\rightarrow M\xrightarrow{1_M} M$ with $Z_M\in \Cal Z$. Since $\mod A$ is Krull-Schmidt, it is easy to see that for a general $M\in\mod A$ there exists a short $\Cal Z$-exact sequence with middle term $M$. This completes the proof that $(\Cal T, \Cal F)$ is a pretorsion theory.
    \end{proof}

    \begin{remark}
        The above result shows how to use lattices of pretorsion and pretorsion-free classes to build pretorsion theories, significantly reducing the number of pairs that need a further check to determine if they form a pretorsion theory.
        \begin{itemize}
            \item When focusing on classic torsion and torsion-free classes, by (1) it is immediately clear which pairs give a pretorsion theory. 
            \item On the other hand, when $\Cal T\in \Cal L_t(A)$ is not a classic torsion class, to construct a pretorsion theory of the form $(\Cal T, \Cal F)$, by (2) it is enough to check $\Cal F$ in $\Cal L_{tf}(A)$ above a certain element, that is all the elements containing $\Cal T^\perp$ (which itself may not be an element in $\Cal L_{tf}(A)$). Among these, the elements $\Cal F\in \Cal L_{tf}(A)$ such that $\add (\Cal T\cup \Cal F)=\mod A$ automatically work by (3).
            \item The dual results show which pairs to exclude and which automatically work when fixing the second half of a pretorsion theory.
            \item In particular, by (3) we have that $(\Cal T, \mod A)$ and $(\mod A, \Cal F)$ are pretorsion theories for all $\Cal T\in\Cal L_t(A)$ and $\Cal{F}\in\Cal L_{tf}(A)$.
            \item Finally, note that a special case of pairs as in point (3) are {\em splitting torsion theories}, that is torsion theories such that each indecomposable object is either torsion or torsion-free or equivalently for each module $M$, the corresponding short exact sequence $0\to T_M\to M\to F_M\to 0$ splits. The latter have been widely studied, see for example \cite{Ho}.
        \end{itemize}
    \end{remark}
\subsection{Pretorsion theories in \texorpdfstring{$\mod k\mathbb{A}_2$}{mod kA2}}
Consider the quiver $Q=\mathbb{A}_2: 1\to 2$. The category $\mod k\mathbb{A}_2$ has three indecomposable objects up to isomorphism: the simple modules corresponding to the two vertices, namely $\begin{smallmatrix}
    1
\end{smallmatrix}$ and $\begin{smallmatrix}
    2
\end{smallmatrix}$, and the indecomposable projective-injective module
$\begin{smallmatrix}
    1\\2
\end{smallmatrix}$. Here, we are using the notation from \cite[Remark~1.1]{S} for indecomposable modules.

The lattices of pretorsion classes, $\Cal L_t(kQ)$, and of pretorsion-free classes $\Cal L_{tf}(kQ)$ of $\mod (kQ)$ are shown in Figure~\ref{fig:A2}. Notice that they are both distributive lattices.

Focusing on $\Cal L_t(kQ)$, as expected by Theorem~\ref{thm_joinirr}, the join-irreducible elements are $\gen$ of the indecomposable modules, that is $\gen(\begin{smallmatrix}
    1
\end{smallmatrix})=\add(\begin{smallmatrix}
    1
\end{smallmatrix})$, $\gen(\begin{smallmatrix}
    2
\end{smallmatrix})=\add(\begin{smallmatrix}
    2
\end{smallmatrix})$ and $\gen(\begin{smallmatrix}
    1\\2
\end{smallmatrix})=\add(\begin{smallmatrix}
    1
\end{smallmatrix},\begin{smallmatrix}
    1\\2
\end{smallmatrix} )$.

Moreover, as $kQ$ is a locally representation directed algebra, the distributive closure of $\tors (kQ)$ can be identified with $\Cal L_t(kQ)$ by Proposition~\ref{prop_distr_closure}. 
Observe that it is the lattice of order ideals of the poset

\[
\xymatrix@C=1em{
&\gen({\begin{smallmatrix}
    1\\2
\end{smallmatrix}})\\
\gen({\begin{smallmatrix}
    2
\end{smallmatrix}})
&&
\gen({\begin{smallmatrix}
    1
\end{smallmatrix}}).\ar@{-}[lu]
}
\]

Note that in both cases there is a single subcategory that is not a classic torsion, respectively torsion-free, class: $\add(\begin{smallmatrix}
    1
\end{smallmatrix},\begin{smallmatrix}
    2
\end{smallmatrix})$, as this is not closed under extensions. Moreover, $\tors (kQ)$ is a subposet but not a sublattice of $\Cal L_t(kQ)$, as a distributive lattice cannot have a non-distributive sublattice. The dual statement for lattices of torsion-free and pretorsion-free classes also holds.

Using Proposition~\ref{prop_build_pret_fromlattices}, one can check that there are $17$ pretorsion theories:
\begin{itemize}\itemsep0.5em
    \item There are $11$ pretorsion theories of the kind $(\Cal T, \mod k\mathbb{A}_2)$ or $(\mod k\mathbb{A}_2, \Cal F)$, for $\Cal T\in \Cal L_t(kQ)$ and $\Cal F\in \Cal L_{tf}(kQ)$, two of which are classic torsion theories.
    \item The remaining $3$ classic torsion theories:
    \[
(\add({\begin{smallmatrix}
    1
\end{smallmatrix}}), \add({\begin{smallmatrix}
    2
\end{smallmatrix}}, {{\begin{smallmatrix}
    1\\2
\end{smallmatrix}}})),
(\add({\begin{smallmatrix}
    2
\end{smallmatrix}}), \add({\begin{smallmatrix}
    1
\end{smallmatrix}})),
(\add({\begin{smallmatrix}
    1
\end{smallmatrix}}, {\begin{smallmatrix}
    1\\2
\end{smallmatrix}}), \add({\begin{smallmatrix}
    2
\end{smallmatrix}})).
    \]
\item The remaining one that can be found using Proposition~\ref{prop_build_pret_fromlattices}(1): 
\[
(\add({\begin{smallmatrix}
    1
\end{smallmatrix}}, {\begin{smallmatrix}
    1\\2
\end{smallmatrix}}), \add({\begin{smallmatrix}
    2
\end{smallmatrix}}, {\begin{smallmatrix}
    1\\2
\end{smallmatrix}})).
\]
\item The following two, that are pretorsion theories by Proposition~\ref{prop_build_pret_fromlattices}(3):
\[
(\add({\begin{smallmatrix}
    1
\end{smallmatrix}}, {\begin{smallmatrix}
    1\\2
\end{smallmatrix}}), \add({\begin{smallmatrix}
    1
\end{smallmatrix}}, {\begin{smallmatrix}
    2
\end{smallmatrix}})),
(\add({\begin{smallmatrix}
    1
\end{smallmatrix}}, {\begin{smallmatrix}
    2
\end{smallmatrix}}), \add({\begin{smallmatrix}
    2
\end{smallmatrix}}, {\begin{smallmatrix}
    1\\2
\end{smallmatrix}})).
\]
\end{itemize}

\begin{figure}[ht]
\scalebox{0.7}{
\xymatrix@!{
& *+[F.]{ \mod k\mathbb{A}_2 }
&&& *+[F.]{
\mod k\mathbb{A}_2 }
\\
*+[F.]{ {\color{red}\add({\begin{smallmatrix}
    1
\end{smallmatrix}}, {{\begin{smallmatrix}
    1\\2
\end{smallmatrix}}})}} \ar@{-}[ru]
&&
\add({\begin{smallmatrix}
    1
\end{smallmatrix}}, {\begin{smallmatrix}
    2
\end{smallmatrix}})\ar@{-}[lu]
& *+[F.]{
{\color{red}\add({\begin{smallmatrix}
    2
\end{smallmatrix}}, {{\begin{smallmatrix}
    1\\2
\end{smallmatrix}}})}} \ar@{-}[ru]
&&
\add({\begin{smallmatrix}
    1
\end{smallmatrix}}, {\begin{smallmatrix}
    2
\end{smallmatrix}})\ar@{-}[lu]
\\
& *+[F.]{ {\color{red}\add({\begin{smallmatrix}
    1
\end{smallmatrix}})}} \ar@{-}[ru]\ar@{-}[lu]
&& *+[F.]{
{\color{red}\add({\begin{smallmatrix}
    2
\end{smallmatrix}})}} \ar@{-}[lu]
& *+[F.]{
{\color{red}\add({\begin{smallmatrix}
    2
\end{smallmatrix}})}}\ar@{-}[ru]\ar@{-}[lu]
&& *+[F.]{
{\color{red}\add({\begin{smallmatrix}
    1
\end{smallmatrix}})}} \ar@{-}[lu]
\\
&& *+[F.]{0} \ar@{-}[ru]\ar@{-}[lu]
&&& *+[F.]{0} \ar@{-}[ru]\ar@{-}[lu]
}
}
\caption{On the left, the lattice of pretorsion classes of $\mod k\mathbb{A}_2$, and on the right its lattice of pretorsion-free classes. In both cases, the join-irreducible elements are coloured in red. Dotted frames indicate the elements of the torsion and torsion-free classes.}
\label{fig:A2}
\end{figure}
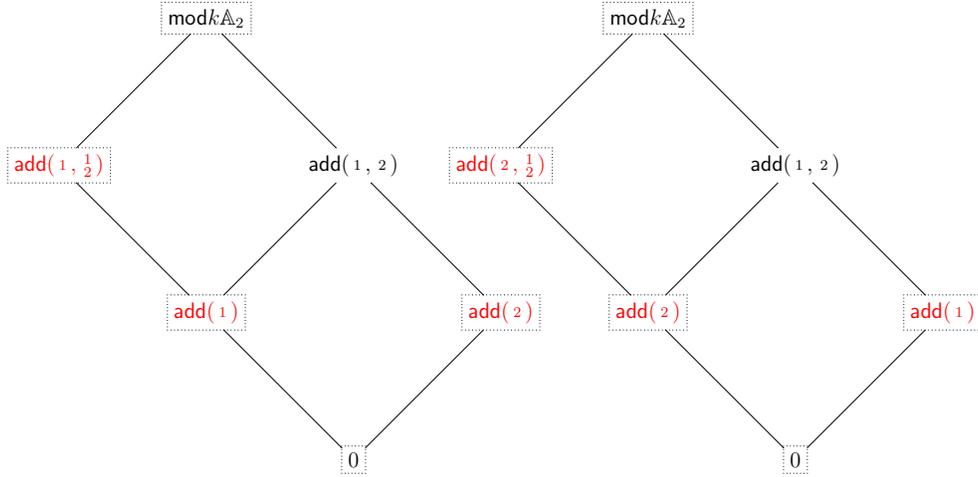

\subsection{Lattices of pretorsion classes for different orientations of \texorpdfstring{$\mathbb{A}_3$}{A3}}

Up to renumbering the vertices, there are three possible orientations of the graph

\[\mathbb{A}_3: \xymatrix{1\ar@{-}[r]&2\ar@{-}[r]&3.}\]

For two of them, the lattice of pretorsion classes $\Cal L_t(kQ)$ is distributive, while for one it is not. We illustrate below these three interesting cases. 

\medskip
\noindent (I) The linear orientation
\[\mathbb{A}_3: \xymatrix{1\ar[r]&2\ar[r]&3}\]
gives the distributive lattice $\Cal L_t (kQ)$ shown in Figure~\ref{fig:A3_linoriented}, where the join-irreducible elements are coloured in red and dotted frames indicate classic torsion classes. Observe that this lattice is isomorphic to the lattice of order ideals of the poset

\[
\xymatrix@C=1em{
&&\gen\Big({\begin{smallmatrix}
    1\\2\\3
\end{smallmatrix}}\Big)\ar@{-}[rd]
\\
& \gen({\begin{smallmatrix}
    2\\3
\end{smallmatrix}})\ar@{-}[rd]
&&
\gen({\begin{smallmatrix}
    1\\2
\end{smallmatrix}})\ar@{-}[rd]
\\
\gen({\begin{smallmatrix}
    3
\end{smallmatrix}})
&&
\gen({\begin{smallmatrix}
    2
\end{smallmatrix}})
&&
\gen({\begin{smallmatrix}
    1
\end{smallmatrix}})
}
\]

and the distributive closure of the lattice $\tors (kQ)$ can be identified with $\Cal L_t(A)$ by Proposition~\ref{prop_distr_closure}.

As pointed out in Corollary~\ref{cor_maximal_sub_boundquiver}, $\Cal L_t(kQ/\Cal I)$, where $\Cal I$ is the ideal generated by the path of length $2$, is also a distributive lattice: it is the lattice consisting of the four lower cubes in Figure~\ref{fig:A3_linoriented} (i.e. the sublattice deleting all the subcategories containing the module $\begin{smallmatrix}
    1\\2\\3
\end{smallmatrix}$). Its join-irreducible elements are the 5 elements coloured in red in this sublattice.

\medskip
\noindent (II) The orientation

\[\mathbb{A}_3: \xymatrix{1\ar[r]&2&3\ar[l]}\]

gives the non-distributive lattice $\Cal L_t (kQ)$ shown in Figure~\ref{fig:A3_nondistributive}, where dotted frames indicate the classic torsion classes. Observe that the join-irreducible elements are the elements $\gen(M)$ for $M$ one of six indecomposable modules, coloured in red in the figure. Note that these are also the join-irreducible elements of the lattice $\tors (kQ)$.
Let us consider the poset of join-irreducible elements of $\Cal L_t (kQ)$. The distributive lattice of order ideals in this poset is a bigger lattice than $\Cal L_t (kQ)$, indeed it has two extra elements.

\medskip
\noindent (III) Finally, the orientation

\[\mathbb{A}_3: \xymatrix{1&2\ar[l]\ar[r]&3}\]

gives the distributive lattice $\Cal L_t (kQ)$ shown in Figure~\ref{fig:A3_distributive2}, where the join-irreducible elements are $\gen(M)$ for $M$ indecomposable module (coloured in red) and dotted frames indicate the classic torsion classes. Observe that this lattice is isomorphic to the lattice of order ideals of the the poset

\[
\xymatrix@C=1em{
&&\gen\Big({\begin{smallmatrix}
    \,2\\
    1\,\,\,\,\,3
\end{smallmatrix}}
\Big)
\\
&\gen({\begin{smallmatrix}
    2\\1
\end{smallmatrix}})\ar@{-}[ru]
&&
\gen({\begin{smallmatrix}
    2\\3
\end{smallmatrix}})\ar@{-}[lu]
\\
\gen({\begin{smallmatrix}
    1
\end{smallmatrix}})
&&
\gen({\begin{smallmatrix}
    2
\end{smallmatrix}})\ar@{-}[ru]\ar@{-}[lu]
&&
\gen({\begin{smallmatrix}
    3
\end{smallmatrix}})
}
\]

and the distributive closure of the lattice $\tors (kQ)$ can be identified with $\Cal L_t(A)$ by Proposition~~\ref{prop_distr_closure}.

\begin{remark} 
\begin{enumerate}
    \item All three lattices of pretorsion classes for type $\mathbb{A}_3$ in Figures~\ref{fig:A3_linoriented}, \ref{fig:A3_nondistributive}, \ref{fig:A3_distributive2} have the same number of elements as the corresponding Coxeter group $S_4$. Furthermore, all three lattices embed into the lattice of Bruhat order on $S_4$.
This observation is also true for type $\mathbb{A}_2$, that is the lattice of pretorsion classes of $\mod \mathbb{A}_2$ has the same number of elements as the Coxeter group $S_3$ and it embeds into the lattice of Bruhat order on $S_3$. This observation hints mysterious combinatorial structures of lattices of pretorsion classes, that we are looking forward to unravel.
\item When lattices of pretorsion classes are not distributive, one can ask about \emph{semidistributivity}. However, $\Cal L_t(A)$ is not semidistributive in general. For instance, consider the lattice in Figure~\ref{fig:A3_nondistributive} and take the join-irreducible element   $\xymatrix@!{j:=
\add({\begin{smallmatrix}
    1\,\,\,\,3\\\,\,2
\end{smallmatrix}},
{\begin{smallmatrix}
    3
\end{smallmatrix}},  {\begin{smallmatrix}
    1
\end{smallmatrix}})}$. The set $\{y\mid y\wedge j = \widehat{j}\}$, where $\widehat{j}$ is the unique element $j$ covers, does not have a maximal element and this implies the fact that the lattice is not semidistributive~\cite{RST}.
\end{enumerate}
\end{remark}

\begin{figure}[ht]
\scalebox{0.5}{
\xymatrix@!{
&&& *+[F.]{\mod kQ}
\\
&&\add({\begin{smallmatrix}
    3
\end{smallmatrix}}, {\begin{smallmatrix}
    2\\3
\end{smallmatrix}},{\begin{smallmatrix}
    2
\end{smallmatrix}}, {\begin{smallmatrix}
    1\\2
\end{smallmatrix}}, {\begin{smallmatrix}
    1
\end{smallmatrix}})\ar@{-}[ru]
&*+[F.]{
\add({\begin{smallmatrix}
    1\\2\\3
\end{smallmatrix}}, {\begin{smallmatrix}
    2\\3
\end{smallmatrix}},{\begin{smallmatrix}
    2
\end{smallmatrix}}, {\begin{smallmatrix}
    1\\2
\end{smallmatrix}}, {\begin{smallmatrix}
    1
\end{smallmatrix}})}\ar@{-}[u]
&
\add({\begin{smallmatrix}
    3
\end{smallmatrix}},
{\begin{smallmatrix}
    1\\2\\3
\end{smallmatrix}},{\begin{smallmatrix}
    2
\end{smallmatrix}}, {\begin{smallmatrix}
    1\\2
\end{smallmatrix}}, {\begin{smallmatrix}
    1
\end{smallmatrix}})\ar@{-}[lu]
\\
& \add({\begin{smallmatrix}
    3
\end{smallmatrix}}, {\begin{smallmatrix}
    2\\3
\end{smallmatrix}},{\begin{smallmatrix}
    2
\end{smallmatrix}}, {\begin{smallmatrix}
    1
\end{smallmatrix}})\ar@{-}[ru]
&
\add({\begin{smallmatrix}
    2\\3
\end{smallmatrix}},{\begin{smallmatrix}
    2
\end{smallmatrix}}, {\begin{smallmatrix}
    1\\2
\end{smallmatrix}}, {\begin{smallmatrix}
    1
\end{smallmatrix}})\ar@{-}[u]\ar@{-}[ru]
&
\add({\begin{smallmatrix}
    3
\end{smallmatrix}},
{\begin{smallmatrix}
    2
\end{smallmatrix}}, {\begin{smallmatrix}
    1\\2
\end{smallmatrix}}, {\begin{smallmatrix}
    1
\end{smallmatrix}})\ar@{-}[lu]\ar@{-}[ru]
&*+[F.]{
\add(
{\begin{smallmatrix}
    1\\2\\3
\end{smallmatrix}},{\begin{smallmatrix}
    2
\end{smallmatrix}}, {\begin{smallmatrix}
    1\\2
\end{smallmatrix}}, {\begin{smallmatrix}
    1
\end{smallmatrix}})}\ar@{-}[lu]\ar@{-}[u]
& *+[F.]{
\add({\begin{smallmatrix}
    3
\end{smallmatrix}},
{\begin{smallmatrix}
    1\\2\\3
\end{smallmatrix}}, {\begin{smallmatrix}
    1\\2
\end{smallmatrix}}, {\begin{smallmatrix}
    1
\end{smallmatrix}})}\ar@{-}[lu]
\\ 
*+[F.]{\add({\begin{smallmatrix}
    3
\end{smallmatrix}}, {\begin{smallmatrix}
    2\\3
\end{smallmatrix}},{\begin{smallmatrix}
    2
\end{smallmatrix}})}\ar@{-}[ru]
&
\add({\begin{smallmatrix}
    2\\3
\end{smallmatrix}},{\begin{smallmatrix}
    2
\end{smallmatrix}}, {\begin{smallmatrix}
    1
\end{smallmatrix}})\ar@{-}[u]\ar@{-}[ru]
&
\add({\begin{smallmatrix}
    3
\end{smallmatrix}},
{\begin{smallmatrix}
    2
\end{smallmatrix}}, {\begin{smallmatrix}
    1
\end{smallmatrix}})\ar@{-}[lu]\ar@{-}[ru]
& *+[F.]{
\add(
{\begin{smallmatrix}
    2
\end{smallmatrix}}, {\begin{smallmatrix}
    1\\2
\end{smallmatrix}}, {\begin{smallmatrix}
    1
\end{smallmatrix}})}\ar@{-}[lu]\ar@{-}[u]\ar@{-}[ru]
&
\add({\begin{smallmatrix}
    3
\end{smallmatrix}},
 {\begin{smallmatrix}
    1\\2
\end{smallmatrix}}, {\begin{smallmatrix}
    1
\end{smallmatrix}})\ar@{-}[lu]\ar@{-}[ru]
& *+[F.]{
{\color{red} \add(
{\begin{smallmatrix}
    1\\2\\3
\end{smallmatrix}}, {\begin{smallmatrix}
    1\\2
\end{smallmatrix}}, {\begin{smallmatrix}
    1
\end{smallmatrix}})}}\ar@{-}[lu]\ar@{-}[u]
\\ 
*+[F.]{{\color{red}\add({\begin{smallmatrix}
    2\\3
\end{smallmatrix}},{\begin{smallmatrix}
    2
\end{smallmatrix}})}}\ar@{-}[ru]\ar@{-}[u]
&
\add({\begin{smallmatrix}
    3
\end{smallmatrix}},{\begin{smallmatrix}
    2
\end{smallmatrix}})\ar@{-}[lu]\ar@{-}[ru]
&
\add(
{\begin{smallmatrix}
    2
\end{smallmatrix}}, {\begin{smallmatrix}
    1
\end{smallmatrix}})\ar@{-}[lu]\ar@{-}[ru]\ar@{-}[u]
& *+[F.]{
\add(
{\begin{smallmatrix}
   3
\end{smallmatrix}}, {\begin{smallmatrix}
    1
\end{smallmatrix}})}\ar@{-}[lu]\ar@{-}[ru]
& *+[F.]{
{\color{red}\add(
 {\begin{smallmatrix}
    1\\2
\end{smallmatrix}}, {\begin{smallmatrix}
    1
\end{smallmatrix}})}} \ar@{-}[lu]\ar@{-}[ru]\ar@{-}[u]
\\
& *+[F.]{
{\color{red}\add({\begin{smallmatrix}
    2
\end{smallmatrix}})}} \ar@{-}[lu]\ar@{-}[ru]\ar@{-}[u]
& *+[F.]{
{\color{red}\add(
 {\begin{smallmatrix}
    3
\end{smallmatrix}})}} \ar@{-}[lu]\ar@{-}[ru]
&
*+[F.]{{\color{red}\add(
 {\begin{smallmatrix}
    1
\end{smallmatrix}})}} \ar@{-}[lu]\ar@{-}[ru]\ar@{-}[u]
\\
&& *+[F.]{0}\ar@{-}[ru]\ar@{-}[lu]\ar@{-}[u]
}
}
\caption{The distributive lattice $\Cal L_t(kQ)$ for $Q: 1\rightarrow 2\rightarrow 3$. }
\label{fig:A3_linoriented}
\end{figure}
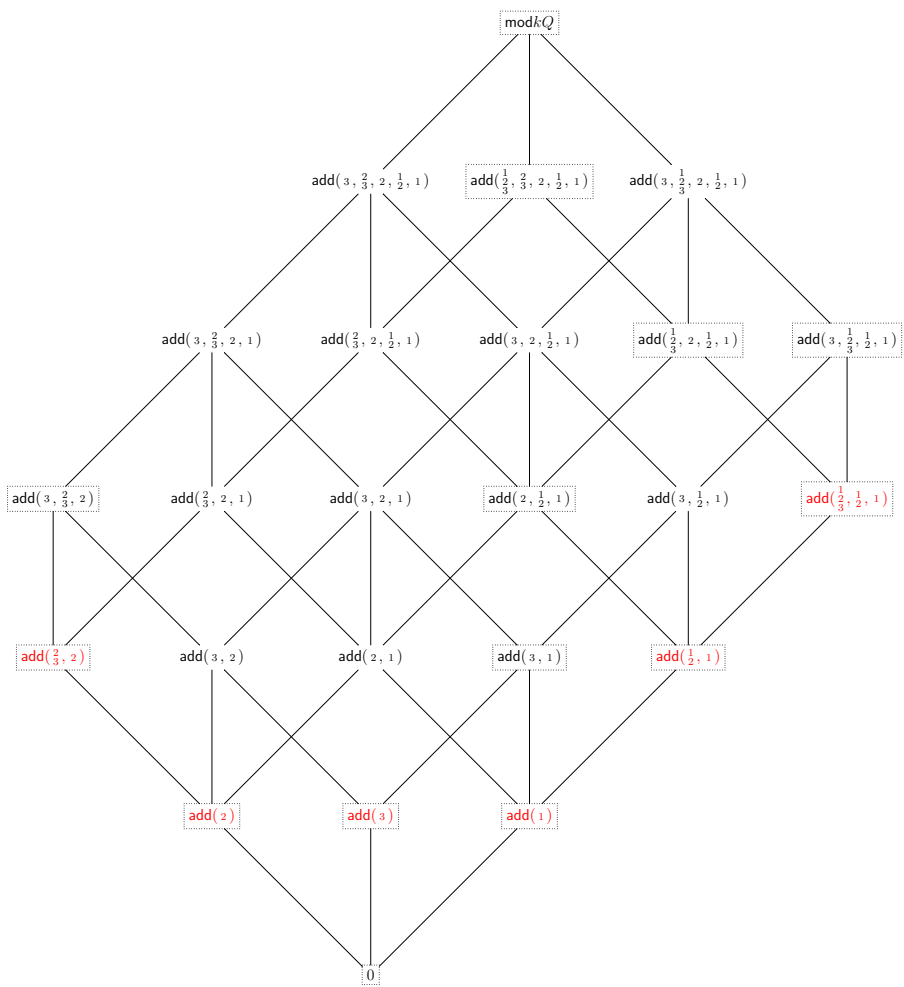

\begin{figure}[ht]
\scalebox{0.5}{
\xymatrix@!{
&&& *+[F.]{\mod kQ}
\\
&&\add({\begin{smallmatrix}
    2
\end{smallmatrix}}, {\begin{smallmatrix}
    1\\2
\end{smallmatrix}},{\begin{smallmatrix}
    1\,\,\,\,3\\\,\,2
\end{smallmatrix}},
{\begin{smallmatrix}
    3
\end{smallmatrix}},  {\begin{smallmatrix}
    1
\end{smallmatrix}})\ar@{-}[ru]
& *+[F.]{
\add({\begin{smallmatrix}
    3\\2
\end{smallmatrix}}, {\begin{smallmatrix}
    1\\2
\end{smallmatrix}},{\begin{smallmatrix}
    1\,\,\,\,3\\\,\,2
\end{smallmatrix}},
{\begin{smallmatrix}
    3
\end{smallmatrix}},  {\begin{smallmatrix}
    1
\end{smallmatrix}})}\ar@{-}[u]
&
\add({\begin{smallmatrix}
    2
\end{smallmatrix}}, {\begin{smallmatrix}
    3\\2
\end{smallmatrix}},{\begin{smallmatrix}
    1\,\,\,\,3\\\,\,2
\end{smallmatrix}},
{\begin{smallmatrix}
    3
\end{smallmatrix}},  {\begin{smallmatrix}
    1
\end{smallmatrix}})\ar@{-}[lu]
\\
& \add({\begin{smallmatrix}
    2
\end{smallmatrix}}, {\begin{smallmatrix}
    1\\2
\end{smallmatrix}},
{\begin{smallmatrix}
    3
\end{smallmatrix}},  {\begin{smallmatrix}
    1
\end{smallmatrix}})\ar@{-}[ru]
& *+[F.]{
\add({\begin{smallmatrix}
    1\\2
\end{smallmatrix}},{\begin{smallmatrix}
    1\,\,\,\,3\\\,\,2
\end{smallmatrix}},
{\begin{smallmatrix}
    3
\end{smallmatrix}},  {\begin{smallmatrix}
    1
\end{smallmatrix}})}\ar@{-}[u]\ar@{-}[ru]
&
\add({\begin{smallmatrix}
    2
\end{smallmatrix}},{\begin{smallmatrix}
    1\,\,\,\,3\\\,\,2
\end{smallmatrix}},
{\begin{smallmatrix}
    3
\end{smallmatrix}},  {\begin{smallmatrix}
    1
\end{smallmatrix}})\ar@{-}[lu]\ar@{-}[ru]
& *+[F.]{
\add( {\begin{smallmatrix}
    3\\2
\end{smallmatrix}},{\begin{smallmatrix}
    1\,\,\,\,3\\\,\,2
\end{smallmatrix}},
{\begin{smallmatrix}
    3
\end{smallmatrix}},  {\begin{smallmatrix}
    1
\end{smallmatrix}})}\ar@{-}[lu]\ar@{-}[u]
&
\add({\begin{smallmatrix}
    2
\end{smallmatrix}}, {\begin{smallmatrix}
    3\\2
\end{smallmatrix}},
{\begin{smallmatrix}
    3
\end{smallmatrix}},  {\begin{smallmatrix}
    1
\end{smallmatrix}})\ar@{-}[lu]
\\
*+[F.]{\add({\begin{smallmatrix}
    2
\end{smallmatrix}}, {\begin{smallmatrix}
    1\\2
\end{smallmatrix}}, {\begin{smallmatrix}
    1
\end{smallmatrix}})}\ar@{-}[ru]
&
\add( {\begin{smallmatrix}
    1\\2
\end{smallmatrix}},
{{\begin{smallmatrix}
    3
\end{smallmatrix}},  {\begin{smallmatrix}
    1
\end{smallmatrix}})}\ar@{-}[u]\ar@{-}[ru]
&
\add({\begin{smallmatrix}
    2
\end{smallmatrix}},
{\begin{smallmatrix}
    3
\end{smallmatrix}},  {\begin{smallmatrix}
    1
\end{smallmatrix}})\ar@{-}[lu]\ar@{-}[ru]\ar@{-}[rrru]
& *+[F.]{
{\color{red}\add({\begin{smallmatrix}
    1\,\,\,\,3\\\,\,2
\end{smallmatrix}},
{\begin{smallmatrix}
    3
\end{smallmatrix}},  {\begin{smallmatrix}
    1
\end{smallmatrix}})}}\ar@{-}[lu]\ar@{-}[u]\ar@{-}[ru]
& *+[F.]{
\add({\begin{smallmatrix}
    2
\end{smallmatrix}}, {\begin{smallmatrix}
    3\\2
\end{smallmatrix}},
{\begin{smallmatrix}
    3
\end{smallmatrix}})}\ar@{-}[lu]\ar@{-}[ru]
&
\add({\begin{smallmatrix}
    3\\2
\end{smallmatrix}},
{\begin{smallmatrix}
    3
\end{smallmatrix}},  {\begin{smallmatrix}
    1
\end{smallmatrix}})\ar@{-}[lu]\ar@{-}[u]
\\
*+[F.]{ {\color{red}\add({\begin{smallmatrix}
    1\\2
\end{smallmatrix}},{\begin{smallmatrix}
    1
\end{smallmatrix}})}}\ar@{-}[ru]\ar@{-}[u]
&
\add({\begin{smallmatrix}
    2
\end{smallmatrix}}, {\begin{smallmatrix}
    1
\end{smallmatrix}})\ar@{-}[lu]\ar@{-}[ru]
& *+[F.]{
\add({\begin{smallmatrix}
    3
\end{smallmatrix}},  {\begin{smallmatrix}
    1
\end{smallmatrix}})}\ar@{-}[lu]\ar@{-}[ru]\ar@{-}[u]\ar@{-}[rrru]
&
\add({\begin{smallmatrix}
    2
\end{smallmatrix}},{\begin{smallmatrix}
    3
\end{smallmatrix}})\ar@{-}[lu]\ar@{-}[ru]
& *+[F.]{
{\color{red}\add( {\begin{smallmatrix}
    3\\2
\end{smallmatrix}},{\begin{smallmatrix}
    3
\end{smallmatrix}})}}\ar@{-}[ru]\ar@{-}[u]
\\
& *+[F.]{
{\color{red}\add({\begin{smallmatrix}
    1
\end{smallmatrix}})}}\ar@{-}[lu]\ar@{-}[ru]\ar@{-}[u]
& *+[F.]{
{\color{red}\add(
 {\begin{smallmatrix}
    2
\end{smallmatrix}})}}\ar@{-}[lu]\ar@{-}[ru]
& *+[F.]{
{\color{red}\add(
 {\begin{smallmatrix}
    3
\end{smallmatrix}})}}\ar@{-}[lu]\ar@{-}[ru]\ar@{-}[u]
\\
&& *+[F.]{0} \ar@{-}[ru]\ar@{-}[lu]\ar@{-}[u]
}
}
\caption{The non-distributive lattice $\Cal L_t(kQ)$ for $Q: 1\rightarrow 2\leftarrow 3$.}
\label{fig:A3_nondistributive}
\end{figure}

\medskip

\begin{figure}[ht]
\scalebox{0.435}{
\xymatrix@!{
&&& *+[F.]{\mod kQ}
\\
&& *+[F.]{\add(
 {\begin{smallmatrix}
    1
\end{smallmatrix}}, {\begin{smallmatrix}
    \,2\\
    1\,\,\,\,\,3
\end{smallmatrix}}, {\begin{smallmatrix}
    2\\3
\end{smallmatrix}}, {\begin{smallmatrix}
    2\\1
\end{smallmatrix}}, {\begin{smallmatrix}
    2
\end{smallmatrix}})}\ar@{-}[ru]
&
\add(
 {\begin{smallmatrix}
    1
\end{smallmatrix}}, {\begin{smallmatrix}
   3
\end{smallmatrix}}, {\begin{smallmatrix}
    2\\3
\end{smallmatrix}}, {\begin{smallmatrix}
    2\\1
\end{smallmatrix}}, {\begin{smallmatrix}
    2
\end{smallmatrix}})\ar@{-}[u]
& *+[F.]{
\add(
 {\begin{smallmatrix}
    3
\end{smallmatrix}}, {\begin{smallmatrix}
    \,2\\
    1\,\,\,\,\,3
\end{smallmatrix}}, {\begin{smallmatrix}
    2\\3
\end{smallmatrix}}, {\begin{smallmatrix}
    2\\1
\end{smallmatrix}}, {\begin{smallmatrix}
    2
\end{smallmatrix}})}\ar@{-}[lu]
\\
&&\add(
 {\begin{smallmatrix}
    1
\end{smallmatrix}},  {\begin{smallmatrix}
    2\\3
\end{smallmatrix}}, {\begin{smallmatrix}
    2\\1
\end{smallmatrix}}, {\begin{smallmatrix}
    2
\end{smallmatrix}})\ar@{-}[ru]\ar@{-}[u]
& *+[F.]{
{\color{red}\add({\begin{smallmatrix}
    \,2\\
    1\,\,\,\,\,3
\end{smallmatrix}},
  {\begin{smallmatrix}
    2\\3
\end{smallmatrix}}, {\begin{smallmatrix}
    2\\1
\end{smallmatrix}}, {\begin{smallmatrix}
    2
\end{smallmatrix}})}}\ar@{-}[lu]\ar@{-}[ru]
&
\add(
 {\begin{smallmatrix}
    3
\end{smallmatrix}},  {\begin{smallmatrix}
    2\\3
\end{smallmatrix}}, {\begin{smallmatrix}
    2\\1
\end{smallmatrix}}, {\begin{smallmatrix}
    2
\end{smallmatrix}})\ar@{-}[lu]\ar@{-}[u]
\\
&&& *+[F.]{
\add(
   {\begin{smallmatrix}
    2\\3
\end{smallmatrix}}, {\begin{smallmatrix}
    2\\1
\end{smallmatrix}}, {\begin{smallmatrix}
    2
\end{smallmatrix}})}\ar@{-}[lu]\ar@{-}[u]\ar@{-}[ru]
\\
&
\add(
 {\begin{smallmatrix}
    1
\end{smallmatrix}}, {\begin{smallmatrix}
    3
\end{smallmatrix}},  {\begin{smallmatrix}
    2\\1
\end{smallmatrix}}, {\begin{smallmatrix}
    2
\end{smallmatrix}})\ar@{-}[rruuu]
&&&&
\add(
 {\begin{smallmatrix}
    1
\end{smallmatrix}}, {\begin{smallmatrix}
    3
\end{smallmatrix}}, {\begin{smallmatrix}
    2\\3
\end{smallmatrix}},  {\begin{smallmatrix}
    2
\end{smallmatrix}})\ar@{-}[lluuu]
\\
*+[F.]{\add(
 {\begin{smallmatrix}
    1
\end{smallmatrix}}, {\begin{smallmatrix}
    2\\1
\end{smallmatrix}}, {\begin{smallmatrix}
    2
\end{smallmatrix}})}\ar@{-}[ru]\ar@{-}[rruuu]
&&
\add(
 {\begin{smallmatrix}
    3
\end{smallmatrix}},  {\begin{smallmatrix}
    2\\1
\end{smallmatrix}}, {\begin{smallmatrix}
    2
\end{smallmatrix}})\ar@{-}[lu]\ar@{-}[rruuu]
&&
\add(
 {\begin{smallmatrix}
    1
\end{smallmatrix}},  {\begin{smallmatrix}
    2\\3
\end{smallmatrix}}, {\begin{smallmatrix}
    2
\end{smallmatrix}})\ar@{-}[ru]\ar@{-}[lluuu]
&& *+[F.]{
\add(
 {\begin{smallmatrix}
    3
\end{smallmatrix}},  {\begin{smallmatrix}
    2\\3
\end{smallmatrix}}, {\begin{smallmatrix}
    2
\end{smallmatrix}})}\ar@{-}[lu]\ar@{-}[lluuu]
\\
& *+[F.]{
{\color{red}\add(
 {\begin{smallmatrix}
    2\\1
\end{smallmatrix}}, {\begin{smallmatrix}
    2
\end{smallmatrix}})}}\ar@{-}[ru]\ar@{-}[lu]\ar@{-}[ru]\ar@{-}[rruuu]
&&&& *+[F.]{
{\color{red}\add(
 {\begin{smallmatrix}
    2\\3
\end{smallmatrix}}, {\begin{smallmatrix}
    2
\end{smallmatrix}})}}\ar@{-}[ru]\ar@{-}[lu]\ar@{-}[lluuu]
\\
&&&
\add(
   {\begin{smallmatrix}
    1
\end{smallmatrix}}, {\begin{smallmatrix}
    3
\end{smallmatrix}}, {\begin{smallmatrix}
    2
\end{smallmatrix}})\ar@{-}[lluuu]\ar@{-}[rruuu]
\\
&& 
\add(
   {\begin{smallmatrix}
    1
\end{smallmatrix}},  {\begin{smallmatrix}
    2
\end{smallmatrix}})\ar@{-}[lluuu]\ar@{-}[rruuu]\ar@{-}[ru]
& *+[F.]{
\add(
   {\begin{smallmatrix}
    1
\end{smallmatrix}}, {\begin{smallmatrix}
    3
\end{smallmatrix}})}\ar@{-}[u]
&
\add({\begin{smallmatrix}
    3
\end{smallmatrix}}, {\begin{smallmatrix}
    2
\end{smallmatrix}})\ar@{-}[lluuu]\ar@{-}[rruuu]\ar@{-}[lu]
\\
&& *+[F.]{
{\color{red}\add(
   {\begin{smallmatrix}
    1
\end{smallmatrix}})}}\ar@{-}[u]\ar@{-}[ru]
& *+[F.]{
{\color{red}\add(
 {\begin{smallmatrix}
    2
\end{smallmatrix}})}}\ar@{-}[lluuu]\ar@{-}[rruuu]\ar@{-}[ru]\ar@{-}[lu]
& *+[F.]{
{\color{red}\add(
   {\begin{smallmatrix}
    3
\end{smallmatrix}})}}\ar@{-}[u]\ar@{-}[lu]
\\
&&&
*+[F.]{0}\ar@{-}[lu]\ar@{-}[u]\ar@{-}[ru]
}
}
\caption{The distributive lattice $\Cal L_t(kQ)$ for $Q: 1\leftarrow 2\rightarrow 3$.}
\label{fig:A3_distributive2}
\end{figure}

\clearpage

\end{document}